\newtheorem{theorem}{Theorem}[section]
\newtheorem{proposition}[theorem]{Proposition}
\newtheorem{lemma}[theorem]{Lemma}
\numberwithin{equation}{section}
\theoremstyle{definition}
\newenvironment{example}
  {\pushQED{\qed}\examplex}
  {\popQED\endexamplex}
\theoremstyle{remark}
\newtheorem{remark}[theorem]{Remark}
\newtheorem{remarks}[theorem]{Remarks}
\newtheorem*{remark*}{Remark}
\newcommand{\1}[1]{{\mathbf 1}{\{#1\}}}
\newcommand{\2}[1]{{\mathbf 1}_{#1}}
\newcommand{\R}{{\mathbb R}}
\newcommand{\Z}{{\mathbb Z}}
\newcommand{\ZP}{{\mathbb Z}_+}
\newcommand{\RP}{{\mathbb R}_+}
\DeclareMathOperator{\Exp}{\mathbb{E}}
\renewcommand{\Pr}{{\mathbb P}}
\newcommand{\tra}{{\scalebox{0.6}{$\top$}}}
\newcommand{\eps}{\varepsilon}
\newcommand{\re}{{\mathrm{e}}}
\newcommand{\rc}{{\mathrm{c}}}
\newcommand{\ud}{{\mathrm d}}
\newcommand{\cD}{{\mathcal D}}
\newcommand{\cF}{{\mathcal F}}
\newcommand{\cR}{{\mathcal R}}
\newcommand{\bc}{\beta_{\mathrm{c}}}
\def\namedlabel#1#2{\begingroup  
    (#2)%
    \def\@currentlabel{#2}%
    \phantomsection\label{#1}\endgroup
}
\newlist{myenumi}{description}{10}
\setlist[myenumi]{labelindent=\parindent, leftmargin=*, align=left, itemsep=1pt, parsep=0pt}
\setlist[myenumi]{leftmargin=0pt}
\begin{document}

\title{Reflecting random walks in curvilinear wedges}
\author{Mikhail V.\ Menshikov\footnote{Durham University} \and Aleksandar Mijatovi\'c\footnote{University of Warwick and the Alan Turing Institute} \and Andrew R.\ Wade\footnotemark[1]}


\maketitle

\begin{center}
Dedicated to our colleague Vladas Sidoravicius (1963--2019)
\end{center}

\begin{abstract}
We study a random walk (Markov chain) in an unbounded planar domain whose boundary is described by two curves
of the form $x_2 = a^+ x_1^{\beta^+}$
and $x_2 = -a^- x_1^{\beta^-}$, with $x_1 \geq 0$.
In the interior of the domain, the random walk has zero drift and a given increment covariance matrix. From the vicinity of the upper and lower sections of the boundary, the walk drifts back into the interior at a given angle $\alpha^+$ or $\alpha^-$ to the relevant inwards-pointing normal vector.
Here we focus on the case where $\alpha^+$ and $\alpha^-$ are equal but opposite, which includes the case of normal reflection.
For $0 \leq \beta^+, \beta^- < 1$, we identify the phase transition between recurrence and transience, depending on the model parameters,
and quantify recurrence via moments of passage times. 
\end{abstract}

\medskip

\noindent
{\em Key words:}  Reflected random walk; generalized parabolic domain; recurrence; transience; passage-time moments; normal reflection; oblique reflection.

\medskip

\noindent
{\em AMS Subject Classification:}  60J05 (Primary) 60J10,  	60G50 (Secondary).

\section{Introduction and main results}
\label{sec:intro}

\subsection{Description of the model}
\label{sec:model}

We describe our model and then state our main results: see \S\ref{sec:literature}
for a discussion of related literature.
Write $x \in \R^2$ in Cartesian coordinates as $x = (x_1,x_2)$.
For parameters $a^+, a^- >0$ and $\beta^+,\beta^- \geq 0$,
 define, for $z \geq 0$, functions
 $d^+ ( z ) := a^+ z^{\beta^+}$ and  $d^- ( z ) := a^- z^{\beta^-}$.
Set
\[ \cD := \left\{ x \in \R^2 : x_1 \geq 0, \, - d^- (x_1 ) \leq x_2 \leq d^+ (x_1) \right\} .\]
Write $\| \, \cdot \, \|$ for the Euclidean norm on~$\R^2$.
For $x \in \R^2$ and $A \subseteq \R^2$, write $d(x,A) := \inf_{y \in A} \| x - y \|$ for the distance
from $x$ to $A$.
Suppose that there exist $B \in (0,\infty)$ and a subset $\cD_B$ of $\cD$ for which every $x \in \cD_B$ has $d(x, \R^2 \setminus \cD ) \leq B$.
Let $\cD_I := \cD \setminus \cD_B$; we call $\cD_B$ the \emph{boundary} and $\cD_I$ the \emph{interior}.
Set $\cD^\pm_B := \{ x \in   \cD_B : \pm x_2 > 0\}$
for the   parts of $\cD_B$ in the upper and lower half-plane, respectively.

Let $\xi := (\xi_0, \xi_1, \ldots)$ be a discrete-time, time-homogeneous Markov chain on state-space $S \subseteq \cD$.
Set $S_I := S \cap \cD_I$, $S_B := S \cap \cD_B$, and $S^\pm_B := S \cap \cD^\pm_B$.
Write $\Pr_{x}$ and $\Exp_{x}$ for conditional probabilities and expectations given $\xi_0 = x \in S$,
and suppose that $\Pr_x ( \xi_n \in S \text{ for all } n \geq 0 ) = 1$ for all $x \in S$.
Set $\Delta := \xi_1 - \xi_0$.
Then~$\Pr ( \xi_{n+1} \in A \mid \xi_n = x ) = \Pr_{x} ( x + \Delta \in A )$
for all $x \in S$, all measurable $A \subseteq \cD$, and all $n \in \ZP$.
In what follows, we will always treat vectors in $\R^2$ as column vectors.

We will assume that $\xi$ has uniformly bounded $p >2$ moments for its increments,
that in $S_I$ it 
has zero drift and a fixed increment covariance matrix, and that it 
\emph{reflects} in $S_B$, meaning it has drift away from $\partial \cD$ at a certain angle relative to the inwards-pointing normal vector.
In fact we permit perturbations of this situation that are appropriately small as the distance from the origin increases.
See Figure~\ref{fig:picture} for an illustration.


\begin{figure}
\begin{center}
\begin{tikzpicture}[domain=0:10, scale = 1.2]
\filldraw (0,0) circle (1.5pt);
\node at (-0.25,0) {$0$};
\draw[black, line width = 0.40mm]   plot[smooth,domain=0:10,samples=500] ({\x},  {(\x)^(1/2)});
\draw[black, line width = 0.40mm]   plot[smooth,domain=0:10,samples=500] ({\x},  {-(\x)^(1/2)});
\draw[black, line width = 0.20mm, dotted]   plot[smooth,domain=1:10,samples=50] ({((\x)+((1/2)*(1/(sqrt(\x)))))*(1/(sqrt((1)+(1/(4*(\x))))))},  {((\x)^(1/2)-1)*(1/(sqrt((1)+(1/(4*(\x))))))});
\node at (9,2.55) {$\cD_B^+$};
\node at (9,-2.55) {$\cD_B^-$};
\node at (9,0.5) {$\cD_I$};
\draw[black, line width = 0.20mm, dotted]   plot[smooth,domain=1:10,samples=50] ({((\x)+((1/2)*(1/(sqrt(\x)))))*(1/(sqrt((1)+(1/(4*(\x))))))},  {-((\x)^(1/2)-1)*(1/(sqrt((1)+(1/(4*(\x))))))});
\draw[black,->,>=stealth,dashed] (0,0) -- (10,0);
\node at (10.3, 0)       {$x_1$};
\draw[black,->,>=stealth,dashed] (0,-4) -- (0,4);
\node at (0,4.3)       {$x_2$};
\draw (4,2) -- (7,2.75);
\draw (1,1.25) -- (4,2);
\draw (4,2) -- (4.3,0.8);
\draw (3.8,1.95) -- (3.86,1.71);
\draw (4.06,1.76) -- (3.86,1.71);
\draw (4.5,1.5) arc (-45:-67.5:1);
\node at (8, 3.5)       {$x_2 = a^+ x_1^{\beta^+}$};
\node at (8, -3.5)      {$x_2 = -a^- x_1^{\beta^-}$};
\draw[black,->,>=stealth,dashed] (4,2) -- (5,1);
\node at (4.8, 1.6)       {$\alpha^+$};
\end{tikzpicture}
\end{center}
\caption{\label{fig:picture} An illustration of the model parameters, in the case where $\beta^+ = \beta^- \in (0,1)$.}
\end{figure}
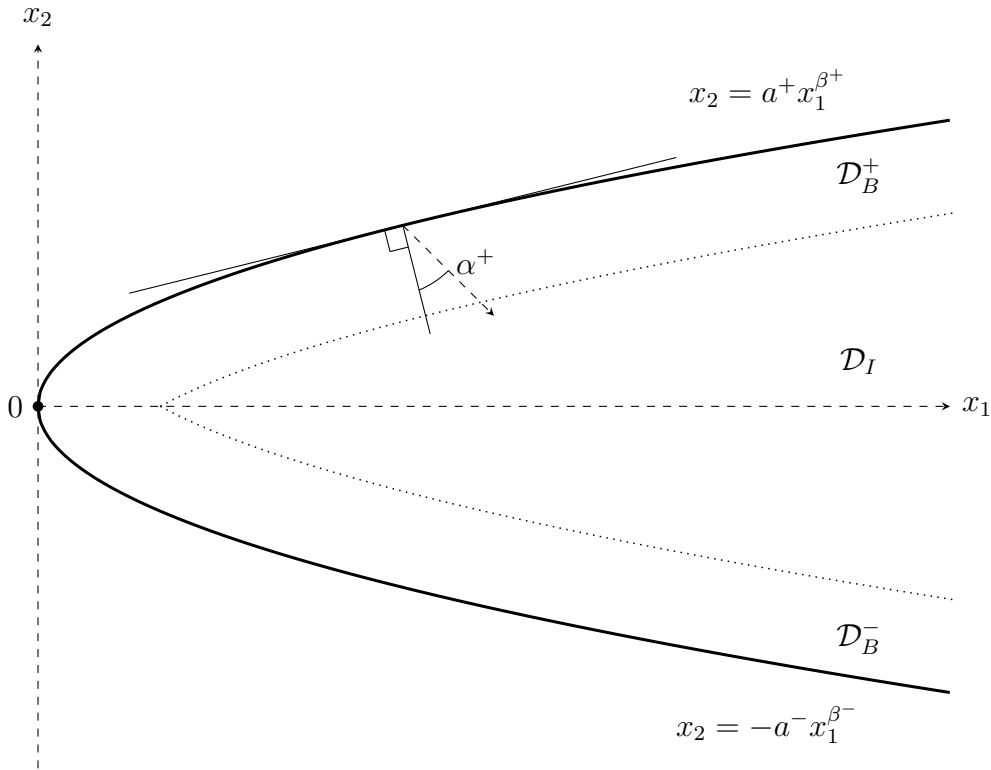
 
To describe the assumptions formally,
for $x_1 >0$ let $n^+ (x_1)$ denote the inwards-pointing unit normal vector to $\partial \cD$ at $(x_1, d^+ (x_1))$,
and let $n^- (x_1)$ be the corresponding normal at $(x_1, - d^- (x_1))$; then $n^+(x_1)$ is a scalar multiple of $(a^+ \beta^+ x_1^{\beta^+-1}, -1)$,
and $n^-(x_1)$ is a scalar multiple of $(a^- \beta^- x_1^{\beta^--1}, 1)$.
Let $n^+ (x_1, \alpha)$ denote the unit vector obtained by rotating $n^+ (x_1)$ by angle~$\alpha$ anticlockwise.
Similarly, let $n^- (x_1, \alpha)$ denote the unit vector obtained by rotating $n^- (x_1)$ by angle~$\alpha$ clockwise.
(The orientation is such that, in each case, reflection at angle $\alpha <0$ is pointing on the side of the normal towards $0$.)

 We write $\| \, \cdot \, \|_{\mathrm{op}}$ for the matrix (operator) norm defined by
$\| M \|_{\mathrm{op}} := \sup_{u} \| M u \|$, where the supremum is over all unit vectors $u \in \R^2$.
We take $\xi_0 = x_0 \in S$ fixed, and impose the following assumptions for our main results.

\begin{description}
\item\namedlabel{ass:non-confinement}{N} 
Suppose that $\Pr_x ( \limsup_{n \to \infty} \| \xi_n \| = \infty ) =1$ for all $x \in S$.
\item\namedlabel{ass:bounded-moments}{M$_p$} 
There exists $p >2$ such that 
\begin{equation}
\label{eq:p-moments} 
\sup_{x \in S} \Exp_x ( \| \Delta \|^p ) < \infty .\end{equation}
\item\namedlabel{ass:zero-drift}{D}
We have that
 $\sup_{x \in S_I : \| x \| \geq r } \| \Exp_x \Delta \| = o (r^{-1})$ as $r \to \infty$.
\item\namedlabel{ass:reflection}{R}
There exist angles $\alpha^\pm \in (-\pi/2,\pi/2)$ and  functions $\mu^\pm : S^\pm_B \to \R$ 
with $\liminf_{\| x \| \to \infty} \mu^\pm (x) >0$, such that, as $r \to \infty$,
\begin{align}
\label{eq:reflection+}  \sup_{x \in S_B^+ : \| x \| \geq r } \| \Exp_x \Delta - \mu^+ (x) n^+(x_1,\alpha^+) \| & = O ( r^{-1}) ; \\
\label{eq:reflection-}  \sup_{x \in S_B^- : \| x \| \geq r } \| \Exp_x \Delta - \mu^- (x) n^-(x_1,\alpha^-) \| & = O( r^{-1} ) 
.\end{align} 
\item\namedlabel{ass:covariance}{C}
There exists a positive-definite, symmetric $2 \times 2$ matrix $\Sigma$ for which 
\[ \lim_{r \to \infty} \sup_{x \in S_I : \| x \| \geq r } \bigl\| \Exp_x ( \Delta \Delta^\tra ) - \Sigma \bigr\|_{\mathrm{op}} = 0 . \]
\end{description}
We write the entries of $\Sigma$ in~\eqref{ass:covariance} as
\[ \Sigma = \begin{pmatrix} \sigma_1^2 & \rho \\
\rho & \sigma_2^2 \end{pmatrix} .\]
Here $\rho$ is the asymptotic increment covariance, and, 
since $\Sigma$ is positive definite, $\sigma_1 >0$, $\sigma_2 > 0$, and $\rho^2 < \sigma_1^2 \sigma_2^2$.

To identify the critically recurrent cases, we need slightly sharper control of the error terms in the drift assumption~\eqref{ass:zero-drift} and covariance assumption~\eqref{ass:covariance}.
In particular, we will in some cases impose the following stronger versions of these assumptions:
\begin{description}
\item\namedlabel{ass:zero-drift-plus}{D$_+$}
There exists $\eps >0$ such that
 $\sup_{x \in S_I : \| x \| \geq r } \| \Exp_x \Delta \| = O ( r^{-1-\eps} )$ as $r \to \infty$.
\item\namedlabel{ass:covariance-plus}{C$_+$}
There exists $\eps >0$ and a positive definite symmetric $2 \times 2$ matrix $\Sigma$ for which 
\[ \sup_{x \in S_I : \| x \| \geq r } \bigl\| \Exp_x ( \Delta \Delta^\tra ) - \Sigma \bigr\|_{\mathrm{op}} = O ( r^{-\eps} ) ,
\text{ as } r \to \infty. \]
\end{description}
Without loss of generality, we may use the same constant $\eps>0$ for both~\eqref{ass:zero-drift-plus} and~\eqref{ass:covariance-plus}.

The non-confinement condition~\eqref{ass:non-confinement}
ensures our questions of recurrence and transience (see below) are non-trivial, and 
is implied by standard irreducibility or ellipticity conditions: see~\cite{mpw} and the following example.
  
\begin{example}
Let $S = \Z^2 \cap \cD$, and take $\cD_B$ to be the set of $x \in \cD$
for which $x$ is within unit $\ell_\infty$-distance of some $y \in \Z^2 \setminus \cD$. Then~$S_B$
contains those points of $S$ that have a neighbour outside of $\cD$, and $S_I$ consists of those
points of $S$ whose neighbours are all in $\cD$. If $\xi$ is irreducible on $S$, then~\eqref{ass:non-confinement} holds
(see e.g.~Corollary 2.1.10 of~\cite{mpw}). If $\beta^+  >0$, then, for all $\| x\|$ sufficiently large,
every point of $x \in S_B^+$ has its neighbours to the right and below in $S$, so if $\alpha^+  =0$, for instance,
we can achieve
the asymptotic drift  required  by~\eqref{eq:reflection+} using only nearest-neighbour jumps if we wish; similarly in $S_B^-$.
\end{example}

Under the non-confinement condition~\eqref{ass:non-confinement}, the first 
question of interest is whether $\liminf_{n \to \infty} \| \xi_n \|$ is finite or infinite.
We say that $\xi$ is \emph{recurrent} if there exists $r_0 \in \RP$ for which $\liminf_{n \to \infty} \| \xi_n \| \leq r_0$, a.s.,
and that $\xi$ is \emph{transient} if  $\lim_{n \to \infty} \| \xi_n \| = \infty$, a.s. The first main aim of this
paper is to classify the process into one or other of these cases (which are not a priori exhaustive)
depending on the parameters. 
Further, in the recurrent cases it is of interest to quantify the recurrence by studying
the tails (or moments) of return times to compact sets. This is the second main aim of this paper.

In the present paper we focus on the case where $\alpha^+ + \alpha^- =0$, which we call `opposed reflection'.
This
case is the most subtle from the point of view of recurrence/transience, and, as we will see, exhibits
a rich phase diagram depending on the model parameters.
We emphasize that the model in the case  $\alpha^+ + \alpha^- =0$ is \emph{near-critical}
in that both recurrence and transience are possible, depending on the parameters, and moreover (i) in the recurrent
cases, return-times to bounded sets have heavy tails being, in particular, non-integrable, and so stationary distributions
will not exist, and (ii) in the transient cases, escape to infinity will be only diffusive. 
There is a sense in which the model studied here can be viewed as a perturbation of zero-drift random walks,
in the manner of the seminal work of Lamperti~\cite{lamp3}:
see e.g.~\cite{mpw}
for a discussion of near-critical phenomena. We leave for future work the case  $\alpha^+ + \alpha^- \neq 0$, in which very different
behaviour will occur: if $\beta^\pm <1$, then the case $\alpha^+ + \alpha^- >0$ gives super-diffusive (but sub-ballistic) transience,
while the case $\alpha^+ + \alpha^- < 0$ leads to positive recurrence.

 Opposed reflection 
 includes the special case where $\alpha^+ = \alpha^- = 0$, which is `normal reflection'.
Since the results are in the latter case more easily digested, and since it is an important case
in its own right, we present the case of normal reflection first, in \S \ref{sec:normal}.
The general case of opposed reflection we present in \S \ref{sec:opposed}.
In \S \ref{sec:literature} we review some of the extensive related literature
on reflecting processes. 
Then \S \ref{sec:outline} gives an outline of the remainder of the paper,
which consists of the proofs of the results in \S\S \ref{sec:normal}--\ref{sec:opposed}.

\subsection{Normal reflection}
\label{sec:normal}

First we consider the case of \emph{normal} (i.e., orthogonal) reflection.

\begin{theorem}
\label{thm:normal-recurrence}
Suppose that~\eqref{ass:non-confinement}, \eqref{ass:bounded-moments}, \eqref{ass:zero-drift}, \eqref{ass:reflection}, and~\eqref{ass:covariance}
hold with $\alpha^+ = \alpha^- = 0$.
\begin{itemize}
\item[(a)] Suppose that $\beta^+, \beta^- \in [0,1)$. Let $\beta := \max ( \beta^+, \beta^-)$.
Then the following hold.
\begin{itemize}
\item[(i)] If $\beta < \sigma_1^2 / \sigma_2^2$, then $\xi$ is recurrent.
\item[(ii)] If $\sigma_1^2 / \sigma_2^2 < \beta < 1$, then $\xi$ is transient.
\item[(iii)] If, in addition,~\eqref{ass:zero-drift-plus} and~\eqref{ass:covariance-plus} hold, then the case $\beta = \sigma_1^2/\sigma_2^2$ is recurrent.
\end{itemize}
\item[(b)] Suppose that~\eqref{ass:zero-drift-plus} and~\eqref{ass:covariance-plus} hold, and $\beta^+,\beta^- >1$. Then $\xi$ is recurrent. 
\end{itemize}
\end{theorem}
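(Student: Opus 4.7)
The plan is to prove both parts by constructing Lyapunov functions on $\cD$ and applying the standard semimartingale (Foster--Lyapunov / Lamperti) criteria for recurrence and transience, in the framework developed by Menshikov--Popov--Wade. The natural ansatz for part~(a) is a two-variable function
\[
f(x) = x_1^a + A(x_1)\, x_2 + \tfrac{1}{2} B(x_1)\, x_2^2,
\]
with coefficients $A(x_1), B(x_1)$ chosen to simultaneously neutralise the leading-order boundary kick on both curves.

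For part~(a), one expands $\Exp_x[f(\xi_1)-f(\xi_0)]$ by Taylor's theorem under (M$_p$), (D), (R) and (C). At the upper boundary the inward-normal drift provided by (R) gives a term of leading order $\mu^+(x)\bigl[a a^+\beta^+ x_1^{a+\beta^+-2} - \bigl(A(x_1) + a^+ x_1^{\beta^+} B(x_1)\bigr)\bigr]$. Requiring this and its lower-boundary counterpart to vanish to leading order yields a linear system for $A,B$, whose solution has $B(x_1) \sim a\beta\, x_1^{a-2}$ with $\beta := \max(\beta^+,\beta^-)$, while $A(x_1)$ is of lower order. The interior generator of $f$ then has leading contribution
\[
\tfrac{1}{2}\sigma_1^2\, a(a-1)\, x_1^{a-2} + \tfrac{1}{2}\sigma_2^2\, B(x_1) + o(x_1^{a-2}) = \tfrac{a}{2}\bigl[(a-1)\sigma_1^2 + \beta\sigma_2^2\bigr]\,x_1^{a-2} + o(x_1^{a-2}),
\]
via the $\partial_2^2$ contribution $B$. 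This is strictly negative for some $a\in(0,1)$ if and only if $\beta<\sigma_1^2/\sigma_2^2$, yielding recurrence~(i); and strictly negative for some $a<0$ (so that $f\to 0$ at infinity) if and only if $\beta>\sigma_1^2/\sigma_2^2$, yielding transience~(ii) via the analogous Foster-type transience criterion. For the critical case~(iii) a logarithmic refinement (multiplying $x_1^a$ by a suitable power of $\log x_1$) is required, and the sharper hypotheses~(D$_+$) and~(C$_+$) enter to control the resulting finer error terms.

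For part~(b), with $\beta^\pm>1$, the wedge widens super-linearly and the walk behaves essentially as a planar zero-drift random walk in $\{x_1\geq 0\}$. The Lyapunov function $f(x)=\log\|\Sigma^{-1/2}x\|$ is $\Sigma$-harmonic in the interior up to $O(\|x\|^{-2})$ (the standard 2D null-recurrent computation). The boundary contribution is negligible because $\beta^\pm>1$ makes the visit frequency $\asymp x_1^{-\beta^\pm}$ very small while each visit produces only a bounded jump; (D$_+$) and (C$_+$) absorb the remaining error terms to give recurrence via Foster--Lyapunov.

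The main technical obstacle is ensuring that $\Exp_x[\Delta f]$ has the correct sign at \emph{boundary} points, where the second-moment information from (C) is not directly available and one has only the bounded $p$-moments from (M$_p$). The typical way to handle this is a multi-step or averaging argument: the walk equidistributes over the transverse slab on a time scale $\asymp x_1^{2\beta}$, and the boundary's potentially wrong-sign contribution is then damped by the visit frequency $\asymp x_1^{-\beta}$ to a lower order than the interior dominant term. A secondary subtlety is the asymmetry when $\beta^+\neq\beta^-$: the linear system for $A, B$ is still solvable, but the subdominant boundary's residual must be shown to be genuinely lower order via the geometry. In the critical regime and in part~(b), the logarithmic corrections must be expanded to higher order, which is precisely why the sharper assumptions (D$_+$) and (C$_+$) are indispensable.
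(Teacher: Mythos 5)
Your Cartesian polynomial ansatz for part~(a) is comparable in spirit to the paper's: the paper remarks explicitly that for $\beta^\pm<1$ one may replace the polar-coordinate harmonic functions $h_w^\gamma(Tx)$ by polynomial approximations, and near $\theta\approx 0$ that function is essentially a polynomial of the kind you propose. The conceptual difference is that the paper first applies a linear transformation $T$ with $T\Sigma T^\top=I$ and then works with $h_w(r,\theta)=r^w\cos(w\theta-\theta_0)$; you work directly in the original coordinates. Both are legitimate. However, the step in your argument that requires the leading boundary drift contribution ``to vanish to leading order'' opens a genuine gap. At a boundary point $x\in S_B^\pm$ assumption~\eqref{ass:reflection} gives $\Exp_x\Delta=\mu^\pm(x)\,n^\pm(x_1,0)+O(\|x\|^{-1})$, so the error in the first-order term of the Taylor expansion is $O(\|x\|^{-1})\|Df\|=O(\|x\|^{a-2})$, and (since only~\eqref{ass:bounded-moments} is available at the boundary, not~\eqref{ass:covariance}) the second-order remainder is also $O(\|x\|^{a-2})$ with no sign information. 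If the leading boundary drift term $\mu^\pm\langle Df,n^\pm\rangle$, of order $\|x\|^{a-2+\beta^\pm}$, is cancelled exactly, then the overall boundary estimate is $\Exp_x[\Delta f]=O(\|x\|^{a-2})$ with uncontrolled sign, and the supermartingale inequality is not secured on~$S_B$. The fix --- which is what the paper's choice of the free parameter $w$ in Lemma~\ref{lem:f-increments} accomplishes --- is not to kill this term but to make it strictly negative and hence dominant: choose $B(x_1)=(a\beta+\eps)x_1^{a-2}$ for a small $\eps>0$, so that $\Exp_x\langle Df,n^\pm\rangle\leq -c\,x_1^{a-2+\beta^\pm}$; this beats the $O(\|x\|^{a-2})$ error when $\beta^\pm>0$, and the interior term becomes negative for some $a>0$ precisely in the range $\beta<\sigma_1^2/\sigma_2^2$ as $\eps\to0^+$. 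Your rigid two-equation linear system leaves no such slack, and your proposed remedy of a multi-step averaging argument over boundary visit frequencies is not needed: the paper's argument is a purely one-step Foster--Lyapunov argument in which the order-one reflection drift dominates the error pointwise once the sign is right.

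Your treatment of part~(b) and of the critical case~(a)(iii) has larger gaps. For~(b), the function $\log\|\Sigma^{-1/2}x\|=\log\|Tx\|$ is indeed $\Sigma$-harmonic, but it does \emph{not} satisfy a pointwise supermartingale condition near the strongly reflecting boundary. Computing $\Exp_x\langle Dh(Tx),\tilde\Delta\rangle$ with $h(x)=\log r+\eta\theta$ and $\eta=0$ produces a boundary contribution with a term $\propto\mp a^\pm\rho\,x_1^{\beta^\pm}/\|Tx\|^2$ of indeterminate sign (and, even if $\rho=0$, a term whose sign is that of $\sigma_2^2-\sigma_1^2/\beta^\pm$, which can be positive). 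The paper needs the angular term $\eta\theta$ with $\eta=\eta_1$ inside the log to cancel the leading $x_1^{\beta^\pm}$ boundary contribution (see Lemma~\ref{lem:ell-increments}), and then an additional correction $-x_1/(1+\|x\|^2)^\gamma$ to force the remaining boundary increment strictly negative, of order $\|x\|^{-2\gamma}$ (Lemma~\ref{lem:w-increments}); the visit-frequency heuristic does not translate into the required pointwise inequality. For~(a)(iii), a refinement of the form $x_1^a(\log x_1)^b$ still grows polynomially and cannot work when $s_0=0$; the paper uses the slowly growing function $\ell(x)=\log\bigl(\log\|Tx\|+\eta_0\,\theta(Tx)\bigr)$ together with a correction $+\theta^2/(1+r)^\gamma$ (Lemma~\ref{lem:g-gamma-increments}). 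The hypotheses~\eqref{ass:zero-drift-plus} and~\eqref{ass:covariance-plus} are used to bound the interior error by $O(\|x\|^{-2-\eps})$, negligible against the leading $\asymp\|x\|^{-2}(\log\|x\|)^{-2}$ interior drift --- a rather different use from the one you envisage.
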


\begin{remarks}
\begin{myenumi}
\setlength{\itemsep}{0pt plus 1pt}
\item[{\rm (i)}] Omitted from Theorem~\ref{thm:normal-recurrence} is the case when at least one of $\beta^\pm$ is equal to 1,
or their values fall each each side of~1. Here we anticipate behaviour similar to~\cite{aim}.

\item[{\rm (ii)}] If $\sigma_1^2 / \sigma_2^2   < 1$, then Theorem~\ref{thm:normal-recurrence} shows a striking \emph{non-monotonicity} property:
there exist regions $\cD_1 \subset \cD_2 \subset \cD_3$ such that the reflecting random walk is recurrent on $\cD_1$ and $\cD_3$, but transient
on $\cD_2$. This phenomenon does not occur in the classical case when $\Sigma$ is the identity: see~\cite{pinsky} for a derivation of monotonicity
in the case of normally reflecting Brownian motion in unbounded domains in $\R^d$, $d \geq 2$.

\item[{\rm (iii)}] Note that the correlation $\rho$ and the values of $a^+, a^-$ play no part in Theorem~\ref{thm:normal-recurrence};
$\rho$ will, however, play a role in the more general Theorem~\ref{thm:opposite} below.
\end{myenumi}
\end{remarks}
  
Let $\tau_r := \min \{ n \in \ZP : \| \xi_n \| \leq r \}$. Define
\begin{equation}
\label{eq:p-def}
 s_0 := s_0 (\Sigma,\beta) :=  \frac{1}{2} \left(1 - \frac{\sigma_2^2 \beta}{\sigma_1^2} \right) .\end{equation}
Our next result concerns the moments of $\tau_r$. Since most of our assumptions are
asymptotic, we only make statements about $r$ sufficiently large; with appropriate irreducibility assumptions,
this restriction could be removed.

\begin{theorem}
\label{thm:normal-moments}
Suppose that~\eqref{ass:non-confinement}, \eqref{ass:bounded-moments}, \eqref{ass:zero-drift}, \eqref{ass:reflection}, and~\eqref{ass:covariance}
hold with $\alpha^+ = \alpha^- = 0$.
\begin{itemize}
\item[(a)] Suppose that $\beta^+, \beta^- \in [0,1)$. Let $\beta := \max ( \beta^+, \beta^-)$.
Then the following hold.
\begin{itemize}
\item[(i)] If $\beta < \sigma_1^2 / \sigma_2^2$, then $\Exp_x ( \tau_r^s ) < \infty$
for all $s < s_0$ and all $r$ sufficiently large,
but  $\Exp_x ( \tau_r^s ) = \infty$
for all  $s > s_0$ and all $x$ with $\| x \| > r$ for $r$ sufficiently large.
\item[(ii)] If $\beta \geq \sigma_1^2 / \sigma_2^2$, then  $\Exp_x ( \tau_r^s ) = \infty$
for all  $s > 0$ and all $x$ with $\| x \| > r$  for $r$ sufficiently large.
\end{itemize}
\item[(b)] Suppose that $\beta^+,\beta^- >1$. Then $\Exp_x ( \tau_r^s ) = \infty$
for all  $s > 0$ and all $x$ with $\| x \| > r$  for $r$ sufficiently large. 
\end{itemize}
\end{theorem}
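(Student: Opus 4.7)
The four statements of the theorem follow from the standard Foster--Lyapunov-type criteria for moments of passage times in the Lamperti regime (see Chapter~2 of~\cite{mpw}). These assert that, for a Lyapunov function $V$ with $V(x) \asymp \|x\|^{2\nu}$, an upper-bound drift condition $\Exp_x[V(\xi_1)-V(x)] \leq -c\, V(x)^{1-1/\nu}$ (together with~\eqref{ass:bounded-moments}) forces $\Exp_x[\tau_r^s]<\infty$ for every $s<\nu$, while a corresponding lower bound $\Exp_x[V(\xi_1)-V(x)] \geq c\, V(x)^{1-1/\nu}$ forces $\Exp_x[\tau_r^s]=\infty$ for every $s>\nu$. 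The whole problem thus reduces to constructing a suitable Lyapunov function adapted to the geometry of $\cD$.

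For part~(a), with $\beta := \max(\beta^+,\beta^-) < 1$, I would work with the one-parameter family
\[ V_\nu(x) := x_1^{2\nu} + \nu\beta\, x_1^{2\nu-2}\, x_2^2 , \qquad \nu \in (0,1/2) , \]
suitably regularised near the origin. Since $|x_2|=O(x_1^\beta)$ on $\cD$ with $\beta<1$, one has $x_1 \asymp \|x\|$ as $\|x\|\to\infty$, so $V_\nu(x) \asymp \|x\|^{2\nu}$. The coefficient $\nu\beta$ of the transverse correction is chosen so that $\nabla V_\nu$ is tangent to leading order to the boundary curve with the larger exponent (namely~$\beta$), ensuring that the reflection drift from~\eqref{ass:reflection} there contributes only lower-order terms to $\Exp_x[V_\nu(\xi_1)-V_\nu(x)]$; on the opposite, steeper, boundary one checks that $\nabla V_\nu \cdot n^\mp$ has the favourable (negative) sign at leading order. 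Combining this with Taylor expansion at interior points and assumptions~\eqref{ass:zero-drift} and~\eqref{ass:covariance}, one obtains, for $\|x\|$ large,
\[ \Exp_x[V_\nu(\xi_1)-V_\nu(x)] = \nu\, \sigma_1^2 \bigl[2\nu - 1 + \beta\,\sigma_2^2/\sigma_1^2\bigr]\, x_1^{2\nu-2} + o\bigl(x_1^{2\nu-2}\bigr) , \]
whose leading bracket vanishes precisely at $\nu = s_0$ from~\eqref{eq:p-def} and changes sign across that value.

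The four sub-cases then follow. For~(a)(i): for any $\nu<s_0$, the drift is $\leq -c\,V_\nu^{1-1/\nu}$, so by the upper criterion $\Exp_x[\tau_r^s]<\infty$ for all $s<\nu$; taking $\nu\uparrow s_0$ yields the full range $s<s_0$. For $s>s_0$, fix $\nu \in (s_0,\min(s,1/2))$; the drift is then $\geq c\,V_\nu^{1-1/\nu}$, and the converse criterion gives $\Exp_x[\tau_r^s]=\infty$. For~(a)(ii), when $\beta\geq\sigma_1^2/\sigma_2^2$ the bracket $2\nu-1+\beta\sigma_2^2/\sigma_1^2$ is strictly positive for \emph{every} $\nu>0$, so applying the converse criterion with arbitrarily small $\nu$ rules out every positive polynomial moment of $\tau_r$. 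For~(b), with $\beta^\pm>1$, the boundary becomes near-vertical at infinity and the domain looks locally like a half-plane; replacing $V_\nu$ by a logarithmic Lyapunov function (say $V(x)=(\log x_1)^\nu$) and exploiting the $\Theta(1)$ reflection drift in the $x_1$-direction yields drift estimates that force $\Pr_x[\tau_r>n]\gtrsim 1/\log n$, hence $\Exp_x[\tau_r^s]=\infty$ for every $s>0$.

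The main obstacle I anticipate is executing the Lyapunov construction cleanly in the asymmetric case $\beta^+ \neq \beta^-$: the single quadratic correction $\nu\beta\,x_1^{2\nu-2}x_2^2$ matches tangency on only the flatter of the two boundaries, so one must verify separately that the reflection on the opposite, steeper, boundary contributes a favourable drift whose order does not exceed the bulk $x_1^{2\nu-2}$. A secondary technicality is that the error controls $o(1/r)$ in~\eqref{ass:zero-drift} and $o(1)$ in~\eqref{ass:covariance} are only just sufficient to keep the leading $x_1^{2\nu-2}$ term dominant; this is why the critical case $s=s_0$ is left outside the statement of Theorem~\ref{thm:normal-moments}, and would need the sharpened~\eqref{ass:zero-drift-plus}, \eqref{ass:covariance-plus} (compare Theorem~\ref{thm:normal-recurrence}(a)(iii)) to be resolved.
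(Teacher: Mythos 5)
Your overall strategy is the right one and matches the paper's: build a Lyapunov function comparable to $\|x\|^{2\nu}$, compute its one-step drift at interior and boundary points, and feed the resulting sub/super-martingale inequalities into the passage-time criteria of~\cite[\S 2.7]{mpw}. Your interior calculation is also correct — the coefficient $\nu\beta$ makes the interior diffusion term $\nu\sigma_1^2\bigl(2\nu-1+\beta\sigma_2^2/\sigma_1^2\bigr)x_1^{2\nu-2}$, whose sign flips at $\nu=s_0$. However, the specific Lyapunov function you propose corresponds exactly to the paper's $f_w^\gamma$ at the \emph{critical} parameter value $w=2s_0$, which the paper deliberately avoids, and this is where your argument develops a genuine gap. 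Choosing the transverse coefficient $\nu\beta$ to cancel the leading boundary drift does not leave ``only lower-order terms'' at the boundary. After the cancellation, the boundary drift is a sum of: the residual of the first-order term arising from the $O(r^{-1})$ allowance in~\eqref{ass:reflection} and from the boundary layer of thickness $B$; the second-order Taylor contribution $\tfrac12\trace(\mathrm{Hess}\,V_\nu \cdot \Exp_x\Delta\Delta^{\tra})$, about which assumption~\eqref{ass:covariance} says nothing on $S_B$; and, if $\beta > 1/2$, terms of order $x_1^{2\nu+2\beta-3}$ from the second-order expansion of the unit normal. All of these are of order $\Omega(x_1^{2\nu-2})$ with sign and constants you do not control, i.e.\ the \emph{same} order as your interior drift. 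The drift inequality therefore fails to be sign-definite at boundary points far from the origin, and none of the four sub-cases follows. The paper sidesteps this entirely by choosing $\theta_0=\theta_1$ and $w\ne 2s_0$ so that the leading boundary drift does \emph{not} cancel: Lemma~\ref{lem:f-increments}(i) gives a boundary drift of order $\|x\|^{\gamma w+\beta^\pm-2}$ proportional to $\beta^\pm-(1-w)\bc$, which is sign-definite and, since $\beta^\pm>0$, strictly dominates all the $O(\|x\|^{\gamma w-2})$ errors.

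Two further issues. For the asymmetric case $\beta^+\neq\beta^-$ you have the sign wrong: with $c=\nu\beta^+$ and $\beta^->0$, the drift on $S_B^-$ is $\approx -2a^-\mu^-\nu(\beta^+-\beta^-)x_1^{2\nu+\beta^--2}$, which is \emph{negative and of larger order} than the bulk $x_1^{2\nu-2}$. This is favourable for the existence (supermartingale) direction, but it destroys the submartingale condition needed for non-existence of moments. The paper handles this with the correction $F_w^{\gamma,\nu}(x)=f_w^\gamma(x)+\lambda x_2\|Tx\|^{2\nu}$ of Lemma~\ref{lem:F-increments}, whose free sign $\lambda$ is chosen $>0$ or $<0$ according to which direction is wanted; some device of this kind is unavoidable. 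Finally, for part (b) your plan is both a different route and underdeveloped: the paper applies Theorem~2.7.4 of~\cite{mpw} to $f_w^\gamma$ with $\gamma>1$, small $w\in(0,1/2)$, and $\theta_0=\theta_3-(1-w)\theta_2$ (using Lemma~\ref{lem:f-increments}(ii)) to get $\Exp_x\tau_r^s=\infty$ for $s>\gamma w/2$ and hence for all $s>0$, whereas your sketch via $V=(\log x_1)^\nu$ does not establish a submartingale condition — in fact for $\beta^\pm>1$ one has $x_1\ll\|x\|$ near the boundary, so $\log x_1$ is not a natural Lyapunov candidate, and the asserted tail bound $\Pr_x(\tau_r>n)\gtrsim 1/\log n$ is never derived.
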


\begin{remarks}\phantomsection
\label{rem:moments}
\begin{myenumi}
\setlength{\itemsep}{0pt plus 1pt}
\item[{\rm (i)}] Note that
if $\beta <  \sigma_1^2 /\sigma_2^2$, then $s_0 > 0$, while
 $s_0 < 1/2$ for all $\beta >0$, in which case the return time to a bounded set has a heavier tail than that for one-dimensional
simple symmetric random walk. 

\item[{\rm (ii)}] The transience result in Theorem~\ref{thm:normal-recurrence}(a)(ii) 
is essentially stronger than the claim in Theorem~\ref{thm:normal-moments}(a)(ii) for $\beta < \sigma_1^2 / \sigma_2^2$, 
so the borderline (recurrent) case $\beta = \sigma_1^2 / \sigma_2^2$ is the main content of the latter.

\item[{\rm (iii)}] Part~(b) shows that the case $\beta^\pm >1$ is critical: no moments of return times exist,
as in the case of, say, simple symmetric random walk in $\Z^2$~\cite[p.~77]{mpw}.
\end{myenumi}
\end{remarks}

\subsection{Opposed reflection}
\label{sec:opposed}

We now consider the more general case where $\alpha^+ + \alpha^- =0$,
i.e., the two reflection angles are equal but opposite,
relative to their respective normal vectors. For $\alpha^+ = -\alpha^- \neq 0$,
this is a particular example of \emph{oblique} reflection.
The phase transition in $\beta$ now depends on $\rho$ and $\alpha$ in addition
to $\sigma_1^2$ and $\sigma_2^2$.
Define
\begin{equation}
\label{eq:betac}
\bc := \bc (\Sigma, \alpha ) := 
\frac{\sigma_1^2}{\sigma_2^2} + \left( \frac{\sigma_2^2-\sigma_1^2}{\sigma_2^2} \right) \sin^2 \alpha
+ \frac{\rho}{\sigma_2^2} \sin 2 \alpha 
.\end{equation}

The next result gives the key properties of the critical threshold function $\bc$
which are needed for interpreting our main result.

\begin{proposition}
\label{prop:bc-max-min}
For a fixed, positive-definite $\Sigma$
such that $|\sigma_1^2 - \sigma_2^2| + |\rho| > 0$, the function
$\alpha \mapsto \bc (\Sigma, \alpha )$ over the interval
$[ -\frac{\pi}{2}, \frac{\pi}{2}]$ is strictly
positive for $| \alpha | \leq \pi/2$, with
two stationary points, one in $(-\frac{\pi}{2},0)$ and
the other in $(0,\frac{\pi}{2})$, at which the function takes its maximum/minimum values of
\begin{equation}
\label{eq:bc-max-min}
 \frac{1}{2} + \frac{\sigma_1^2}{2\sigma_2^2} \pm \frac{1}{2\sigma_2^2} \sqrt{ \left( \sigma_1^2 - \sigma_2^2\right)^2 + 4 \rho^2 } .
\end{equation}
The exception is the case where $\sigma_1^2 - \sigma_2^2 = \rho = 0$, when $\bc = 1$ is constant.
\end{proposition}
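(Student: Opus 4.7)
My plan is to reduce $\bc$ to a phase-shifted cosine in $2\alpha$ and read off everything directly. Applying the identity $\sin^2\alpha = (1 - \cos 2\alpha)/2$ to~\eqref{eq:betac} gives
\[
\bc(\Sigma,\alpha) = A + B \cos 2\alpha + C \sin 2\alpha,
\]
with $A := (\sigma_1^2 + \sigma_2^2)/(2\sigma_2^2)$, $B := (\sigma_1^2 - \sigma_2^2)/(2\sigma_2^2)$, and $C := \rho/\sigma_2^2$. Setting $R := \sqrt{B^2 + C^2}$, the hypothesis $|\sigma_1^2 - \sigma_2^2| + |\rho| > 0$ is exactly $R > 0$, and choosing a phase $\phi \in (-\pi, \pi]$ with $(B, C) = R(\cos\phi, \sin\phi)$ yields the compact form
\[
\bc(\Sigma,\alpha) = A + R \cos(2\alpha - \phi).
\]
The excluded degenerate case $\sigma_1^2 = \sigma_2^2$ and $\rho = 0$ corresponds precisely to $R = 0$ and $\bc \equiv A = 1$.

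From this sinusoidal form the maximum and minimum on $[-\pi/2, \pi/2]$ are immediately $A \pm R$; since a quick calculation gives $R = \tfrac{1}{2\sigma_2^2}\sqrt{(\sigma_1^2 - \sigma_2^2)^2 + 4\rho^2}$, these match~\eqref{eq:bc-max-min}. For strict positivity, since $A > 0$, it suffices to show $A^2 > R^2$; after clearing denominators this reduces to $(\sigma_1^2 + \sigma_2^2)^2 > (\sigma_1^2 - \sigma_2^2)^2 + 4\rho^2$, i.e., $\sigma_1^2 \sigma_2^2 > \rho^2$, which is exactly the positive-definiteness of $\Sigma$ recorded after~\eqref{ass:covariance}.

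For the stationary points I would compute $\bc'(\alpha) = -2R\sin(2\alpha - \phi)$, noting $\bc'(0) = 2\rho/\sigma_2^2$ and $\bc'(\pm \pi/2) = -2\rho/\sigma_2^2$. When $\rho \neq 0$, neither $0$ nor the endpoints $\pm \pi/2$ is a zero of $\bc'$, so the two $\pi/2$-spaced zeros of $\sin(2\alpha - \phi)$ falling in an interval of length $\pi$ must both lie strictly interior to $(-\pi/2, \pi/2) \setminus \{0\}$; since two points $\pi/2$ apart inside $(-\pi/2, \pi/2)$ cannot lie on the same side of $0$, they split as one in $(-\pi/2, 0)$ and one in $(0, \pi/2)$. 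The borderline subcase $\rho = 0$ with $\sigma_1^2 \neq \sigma_2^2$ is disposed of by direct inspection of $\bc(\alpha) = A + B\cos 2\alpha$, which is monotone on each half-interval. The proof is essentially a trigonometric reduction; the only substantive input is the positive-definiteness of $\Sigma$ appearing in the positivity step, with the case analysis for locating the stationary points in the correct open half-intervals being the only spot where mild care is required.
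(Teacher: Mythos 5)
Your reduction to the amplitude--phase form $\bc = A + R\cos(2\alpha-\phi)$ is correct and is a genuinely different route from the paper's. The paper factors out the coefficient of $\sin^2\alpha$ and reduces to the auxiliary function $\phi(\alpha)=\sin^2\alpha + b\sin 2\alpha$ with $b = \rho/(\sigma_2^2-\sigma_1^2)$, analysed separately in Lemma~\ref{lem:calculus} via the stationarity condition $\tan 2\alpha = -2b$, with the case $\sigma_1^2=\sigma_2^2$ treated on its own. Your half-angle substitution goes straight to $A+R\cos(2\alpha-\phi)$ with $R=\frac{1}{2\sigma_2^2}\sqrt{(\sigma_1^2-\sigma_2^2)^2+4\rho^2}$, from which the extremal values $A\pm R$ and the positivity criterion $A^2>R^2 \Leftrightarrow \rho^2<\sigma_1^2\sigma_2^2$ drop out with no case split. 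That is cleaner and avoids the need for a standalone calculus lemma; what the paper's route buys is a closed form for the critical $\alpha$ (namely $\alpha_0=\tfrac12\arctan(-2b)$ and $\alpha_0\pm\tfrac{\pi}{2}$), which your phase $\phi$ encodes only implicitly. Your argument for locating the two stationary points when $\rho\neq 0$ (neither $0$ nor $\pm\pi/2$ is a zero of $\bc'$, the zeros are $\pi/2$ apart, hence they straddle the origin) is sound.

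One point deserves a sharper look: your disposal of the subcase $\rho=0$, $\sigma_1^2\neq\sigma_2^2$ by ``direct inspection'' is evasive, and for good reason. There $\bc(\alpha)=A+B\cos 2\alpha$ with $B\neq 0$, whose derivative $-2B\sin 2\alpha$ vanishes on $[-\tfrac{\pi}{2},\tfrac{\pi}{2}]$ precisely at $\alpha\in\{0,\pm\tfrac{\pi}{2}\}$, so there is exactly one interior stationary point and it sits at $\alpha=0$, not in either open half-interval; the proposition's phrasing (``one in $(-\tfrac{\pi}{2},0)$ and the other in $(0,\tfrac{\pi}{2})$'') does not literally hold in that subcase. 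This is not a defect peculiar to your write-up: the paper's own proof invokes Lemma~\ref{lem:calculus}, which requires $b\neq 0$, i.e.\ $\rho\neq 0$, and the parenthetical ``considering separately the case $\sigma_1^2=\sigma_2^2$'' does not cover $\rho=0$ with $\sigma_1^2\neq\sigma_2^2$ either. The formula~\eqref{eq:bc-max-min} for the extremal values, and the strict positivity, remain correct in that subcase (the max and min are attained at $\alpha=0$ and $\alpha=\pm\tfrac{\pi}{2}$), which is what the rest of the paper actually uses; only the location claim is off by an endpoint. If you want a fully airtight statement you should either restrict the interior-stationary-point claim to $\rho\neq 0$, or read the interval modulo identifying $\pm\tfrac{\pi}{2}$.
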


Here is the recurrence classification in this setting.

\begin{theorem}
\label{thm:opposite}
Suppose that~\eqref{ass:non-confinement}, \eqref{ass:bounded-moments}, \eqref{ass:zero-drift}, \eqref{ass:reflection}, and~\eqref{ass:covariance}
hold with  $\alpha^+ = -\alpha^- = \alpha$ for $|\alpha| < \pi/2$.
\begin{itemize}
\item[(a)] Suppose that $\beta^+, \beta^- \in [0,1)$. Let $\beta := \max ( \beta^+, \beta^-)$.
Then the following hold.
\begin{itemize}
\item[(i)] If $\beta < \bc$, then $\xi$ is recurrent.
\item[(ii)] If $\beta > \bc$, then $\xi$ is transient.
\item[(iii)] If, in addition,~\eqref{ass:zero-drift-plus} and~\eqref{ass:covariance-plus} hold, then the case $\beta = \bc$ is recurrent.
\end{itemize}
\item[(b)] Suppose that~\eqref{ass:zero-drift-plus} and~\eqref{ass:covariance-plus} hold, and $\beta^+,\beta^- >1$. Then $\xi$ is recurrent. 
\end{itemize}
\end{theorem}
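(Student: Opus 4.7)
The plan is to prove Theorem~\ref{thm:opposite} by the semimartingale / Lyapunov-function method, generalising the approach used for the normal-reflection results (Theorems~\ref{thm:normal-recurrence} and~\ref{thm:normal-moments}). The strategy is to construct a radial test function $f:\cD\to[0,\infty)$ of polynomial (or, in critical cases, logarithmic) growth in $\|x\|$ for which $\Exp_x[f(\xi_{n+1})-f(\xi_n)]$ has a prescribed sign outside a bounded region, then invoke standard Foster--Lyapunov and Lamperti-type criteria such as those collected in~\cite{mpw}.

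For part~(a), with $\beta^\pm\in[0,1)$, a natural candidate is $f(x)=\|x\|^\nu$ for an exponent $\nu\in\R$ to be determined. In the interior $S_I$, second-order Taylor expansion together with~\eqref{ass:zero-drift} and~\eqref{ass:covariance} yields
\[
\Exp_x[f(\xi_{n+1})-f(\xi_n)] = \tfrac{1}{2}\,\mathrm{tr}\bigl(\Sigma\,D^2 f(x)\bigr) + o(\|x\|^{\nu-2}),
\]
a term of order $\|x\|^{\nu-2}$ whose coefficient is quadratic in $\nu$ and involves $\sigma_1^2,\sigma_2^2,\rho$. On $S_B^\pm$, by~\eqref{ass:reflection}, the contribution is $\mu^\pm(x)\,\nabla f(x)\cdot n^\pm(x_1,\pm\alpha)+O(\|x\|^{\nu-2})$, and expansion of the rotated unit normals for large $x_1$ gives
\[
n^+(x_1,\alpha) = (\sin\alpha,-\cos\alpha) + O(x_1^{\beta^+-1}), \qquad n^-(x_1,-\alpha) = (-\sin\alpha,\cos\alpha) + O(x_1^{\beta^--1}).
\]
These leading $\pm\sin\alpha$ contributions on the two boundaries pull in opposite $x_1$-directions; when integrated against the local occupation density, which is inversely proportional to the domain width $\sim x_1^\beta$, they combine with the sub-leading curvature terms and the interior Hessian contribution to yield a net coefficient of $\|x\|^{\nu-2}$ whose sign is determined by $\nu$ and the model parameters. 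Matching this with the identity
\[
\bc = \sigma_2^{-2}\,(\cos\alpha,\sin\alpha)\,\Sigma\,(\cos\alpha,\sin\alpha)^\tra,
\]
equivalent to~\eqref{eq:betac}, locates the critical exponent $\nu$ at $\beta=\bc$: the $\rho\sin 2\alpha$ term arises from the off-diagonal of $\Sigma$ acting against the tilted reflection, and the $\sin^2\alpha$ term from the $x_2$-components of the rotated normals.

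Cases~(a)(i) and~(a)(ii) then follow by choosing $\nu$ of appropriate sign so that $f$ is a supermartingale in the recurrent regime $\beta<\bc$ or a submartingale in the transient regime $\beta>\bc$ outside a bounded set. The critical case $\beta=\bc$ in~(a)(iii) requires a logarithmic refinement such as $f(x)=[\log(1+\|x\|^2)]^\kappa$; the sharpened estimates~\eqref{ass:zero-drift-plus} and~\eqref{ass:covariance-plus} are essential here to ensure that the $O(r^{-1-\eps})$ drift error and $O(r^{-\eps})$ covariance error do not overwhelm the logarithmic signal. For part~(b), with $\beta^\pm>1$, the boundary curves recede super-linearly, so $\xi_n$ is asymptotically indistinguishable from an unrestricted $\R^2$-valued zero-drift random walk with covariance $\Sigma$; recurrence follows by the classical argument for simple symmetric random walk on $\Z^2$, taking $f(x)=\log(1+\|x\|^2)$ as the Lyapunov function, with the enhanced assumptions used to control the error terms.

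The main obstacle lies in the careful asymptotic bookkeeping in part~(a): orchestrating the cancellation of the leading $\pm\sin\alpha$ boundary contributions between the two boundary pieces, weighting correctly by occupation-time densities when $\beta^+\neq\beta^-$, and tracking the interplay between the off-diagonal $\rho$ and the tilted reflection to extract precisely the combination~\eqref{eq:betac}. Proposition~\ref{prop:bc-max-min}, which guarantees $\bc>0$ throughout $|\alpha|<\pi/2$, ensures the recurrent regime in~(a)(i) is non-empty.
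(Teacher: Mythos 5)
Your broad strategy (Lyapunov functions plus Foster--Lyapunov criteria as in~\cite{mpw}) matches the paper's, and your identity $\bc = \sigma_2^{-2}(\cos\alpha,\sin\alpha)\,\Sigma\,(\cos\alpha,\sin\alpha)^\tra$ is indeed equivalent to~\eqref{eq:betac}. However, there is a genuine gap at the heart of the proposal: the radial function $f(x)=\|x\|^\nu$ cannot be made into a supermartingale (or submartingale) simultaneously on $S_B^+$ and $S_B^-$ once $\alpha\neq 0$. On $S_B^\pm$, the leading boundary drift contribution to $\nabla f\cdot\Exp_x\Delta$ is $\pm\nu\mu^\pm(x)\sin\alpha\,x_1\|x\|^{\nu-2}=\pm O(\|x\|^{\nu-1})$, opposite in sign on the two boundaries and one full order larger than the interior contribution $O(\|x\|^{\nu-2})$. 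No choice of $\nu$ fixes both signs. Your remark that these terms ``combine when integrated against the local occupation density'' is not a step available inside a Lyapunov-function argument: the Foster--Lyapunov criteria require a one-sided drift bound \emph{pointwise} on $\|x\|\geq r_0$, not on time-average. (A time-averaged or excursion-theoretic argument could in principle be made rigorous, but that is a quite different proof and you have not developed it.) Even for $\alpha=0$ but $\rho\neq 0$ or $\sigma_1\neq\sigma_2$, the radial $\|x\|^\nu$ does not produce the threshold $\sigma_1^2/\sigma_2^2$, because the boundary drift $\nu\|x\|^{\nu-2}a^\pm(\beta^\pm-1)x_1^{\beta^\pm}$ carries no dependence on $\Sigma$ to compare against $\beta$.

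What the paper does, and what you are missing, is the key device of Sections~\ref{sec:linear}--\ref{sec:lyapunov}: first transform by a linear map $T$ with $T\Sigma T^\tra=I$, and then use the harmonic functions $h_w(r,\theta)=r^w\cos(w\theta-\theta_0)$ evaluated at $Tx$, where the free angle $\theta_0$ is tuned so that the level curves of $h_w$ meet the (transformed) boundary at the (transformed) reflection angle. Concretely, taking $\theta_0=\theta_1=\arctan\bigl(\frac{\sigma_2^2\tan\alpha+\rho}{s}\bigr)$ makes the leading $\pm\sin\alpha$ boundary term vanish identically (the quantity $A_1$ in the proof of Lemma~\ref{lem:f-increments}), leaving a sub-leading boundary drift of order $\|x\|^{\gamma w-2+\beta^\pm}$ whose sign is governed by $\beta^\pm-(1-w)\bc$. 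That cancellation, not an occupation-density average, is what makes $f_w^\gamma$ usable. Similarly, for part~(a)(iii) the paper uses $g_\gamma(x)=\log h(Tx)+\theta^2(1+r)^{-\gamma}$ with $h(r,\theta)=\log r+\eta_0\theta$, not $[\log(1+\|x\|^2)]^\kappa$ (the linear $\eta\theta$ term is again what kills the dominant boundary drift); for part~(b) the paper uses $w_\gamma(x)=\log h(Tx)-x_1(1+\|x\|^2)^{-\gamma}$, not a purely radial $\log(1+\|x\|^2)$. Finally, when $\beta^+\neq\beta^-$ an extra correction term (the function $F_w^{\gamma,\nu}$ of Lemma~\ref{lem:F-increments}) is needed; your proposal does not address this case either.
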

\begin{remarks}
\begin{myenumi}
\setlength{\itemsep}{0pt plus 1pt}
\item[{\rm (i)}]
The threshold~\eqref{eq:betac} is invariant under the map
$(\alpha,\rho) \mapsto (-\alpha,-\rho)$.

\item[{\rm (ii)}]
For fixed $\Sigma$ with $| \sigma_1^2 - \sigma_2^2 | + |\rho | >0$,
Proposition~\ref{prop:bc-max-min}
shows that $\bc$ is non-constant and has exactly one maximum and exactly one minimum in $(-\frac{\pi}{2}, \frac{\pi}{2})$.
Since $\bc (\Sigma, \pm \frac{\pi}{2} ) =1$, it follows from uniqueness of the minimum 
that the minimum is strictly less than $1$,
and so Theorem~\ref{thm:opposite} shows that there is always an open interval of $\alpha$ for which there is transience.

\item[{\rm (iii)}]
Since $\bc > 0$ always,  recurrence is certain for small enough $\beta$.

\item[{\rm (iv)}]
In the case where $\sigma_1^2 = \sigma_2^2$ and $\rho =0$, then $\bc =1$, so recurrence is certain for all $\beta^+, \beta^- <1$ and all $\alpha$.

\item[{\rm (v)}] If $\alpha = 0$, then $\bc = \sigma_1^2/\sigma_2^2$, so Theorem~\ref{thm:opposite}
generalizes Theorem~\ref{thm:normal-recurrence}.
\end{myenumi}
\end{remarks}

Next we turn to passage-time moments. We generalize~\eqref{eq:p-def} and define
\begin{equation}
\label{eq:p-def-opposite}
 s_0 := s_0 (\Sigma,\alpha,\beta) := \frac{1}{2} \left( 1 - \frac{\beta}{\bc} \right)
,\end{equation}
with $\bc$ given by~\eqref{eq:betac}. 
The next result includes Theorem~\ref{thm:normal-moments}
as the special case $\alpha =0$.

\begin{theorem}
\label{thm:opposite-moments}
Suppose that~\eqref{ass:non-confinement}, \eqref{ass:bounded-moments}, \eqref{ass:zero-drift}, \eqref{ass:reflection}, and~\eqref{ass:covariance}
hold with  $\alpha^+ = -\alpha^- = \alpha$ for $|\alpha| < \pi/2$.
\begin{itemize}
\item[(a)] Suppose that $\beta^+, \beta^- \in [0,1)$. Let $\beta := \max ( \beta^+, \beta^-)$.
Then the following hold.
\begin{itemize}
\item[(i)] If $\beta < \bc$, then $s_0 \in (0,1/2]$, and
 $\Exp_x ( \tau_r^s ) < \infty$
for all $s < s_0$ and all $r$ sufficiently large,
but  $\Exp_x ( \tau_r^s ) = \infty$
for all  $s > s_0$ and all $x$ with $\| x \| > r$  for $r$ sufficiently large.
\item[(ii)] If $\beta \geq \bc$, then  $\Exp_x ( \tau_r^s ) = \infty$
for all  $s > 0$ and all $x$ with $\| x \| > r$  for $r$ sufficiently large.
\end{itemize}
\item[(b)] Suppose that $\beta^+,\beta^- >1$. Then $\Exp_x ( \tau_r^s ) = \infty$
for all  $s > 0$ and all $x$ with $\| x \| > r$  for $r$ sufficiently large. 
\end{itemize}
\end{theorem}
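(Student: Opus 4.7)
The plan is to mirror the Lyapunov-function analysis that underpins Theorem~\ref{thm:opposite}, now tracking a varying exponent so that the same test functions which detect the $\bc$ phase boundary also deliver the $s_0$ phase boundary for moments of $\tau_r$, via standard Lamperti-type passage-time moment criteria (as developed throughout~\cite{mpw}). The phase-transition value $s_0$ will emerge as exactly the exponent at which the Foster--Lyapunov drift of a suitably chosen power function changes sign.

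For part~(a)(i), I would construct, for each $s \in (0, s_0)$, a smooth function $f_s(x) = g(x)^{2s}$ on~$\cD$ with $g(x) \sim \|x\|$ as $\|x\| \to \infty$, where $g$ incorporates an angular correction tuned to $\Sigma$ and to the reflection angle $\alpha$. The target is a drift bound of the form
\[ \Exp_x \bigl[ f_s(\xi_1) - f_s(x) \bigr] \leq -c\, \|x\|^{2s-2}, \qquad \|x\| \geq r_0, \]
uniformly on compact subintervals of $s \in (0, s_0)$. In the interior stratum $S_I$, assumptions~\eqref{ass:zero-drift} and~\eqref{ass:covariance} give an It\^o-type expansion whose leading contribution is the $(2s)(2s-2)$-quadratic term against $\Sigma$. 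In the boundary strata $S_B^\pm$, assumption~\eqref{ass:reflection} contributes a positive radial drift of order~$1$, but this contribution is effective only on a set of relative density $\asymp \|x\|^{\beta-1}$, corresponding to the asymptotic width of the boundary layer. Balancing the two contributions is precisely the trigonometric computation that produces~\eqref{eq:betac}, and shows that the sign of the total drift flips at exactly $s = s_0 = \tfrac{1}{2}(1-\beta/\bc) \in (0, 1/2]$. A standard Foster--Lyapunov-type criterion then converts the drift bound into $\Exp_x[\tau_r^s] < \infty$ for $s < s_0$, while the matching divergence $\Exp_x[\tau_r^s] = \infty$ for $s > s_0$ is obtained from the complementary lower-bound criterion, applied to $f_{s'}$ with $s' \in (s_0, s)$ where the drift is now non-negative.

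Parts~(a)(ii) and~(b) use the same machinery in degenerate regimes. When $\beta > \bc$, Theorem~\ref{thm:opposite}(a)(ii) already gives transience and hence $\Pr_x(\tau_r = \infty) > 0$, which trivially forces all positive moments to be infinite. In the critical case $\beta = \bc$, the sharper assumptions~\eqref{ass:zero-drift-plus} and~\eqref{ass:covariance-plus} are not in force so transience is not asserted, but the same Lyapunov construction still shows that the effective radial drift is non-negative on large scales, and an Aspandiiarov--Iasnogorodski--Menshikov-type lower-bound criterion then rules out any positive moment of~$\tau_r$. For part~(b), when $\beta^\pm > 1$ the boundary layer is so wide that its contribution washes out on the scale of~$\|\xi_n\|$, reducing the radial process to an asymptotic zero-drift two-dimensional walk; the classical return-time argument for two-dimensional zero-drift walks then rules out all positive moments.

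The principal obstacle, shared by all three parts, is identifying the effective Lamperti drift/variance ratio as $\beta/\bc$ in a uniform way across both the interior and the boundary strata, which requires essentially the same angle-and-curvature bookkeeping that produces~\eqref{eq:betac} in the proof of Theorem~\ref{thm:opposite}, but now carried through the exponent~$s$. The most delicate point is controlling the Foster--Lyapunov error terms as $s \uparrow s_0$ in part~(a)(i), where the leading drift vanishes; the infinite-moment statements in~(a)(ii) and~(b) are robust and, as reflected in the weaker hypothesis of Theorem~\ref{thm:opposite-moments} compared to Theorem~\ref{thm:opposite}, do not require the sharper error estimates~\eqref{ass:zero-drift-plus} and~\eqref{ass:covariance-plus}.
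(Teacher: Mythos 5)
Your overall strategy (power-law Lyapunov functions coupled to Lamperti/Foster--Lyapunov passage-time criteria, with the exponent tuned so that the drift changes sign at $s_0 = \tfrac12(1-\beta/\bc)$) is the same as the paper's, and your shortcut for $\beta > \bc$ in part~(a)(ii) via transience is correct and anticipated by Remark~\ref{rem:moments}(ii). However, there are three issues in the details.

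First, the one-parameter family $f_s(x) = g(x)^{2s}$ with $g(x)\sim\|x\|$ is not flexible enough as described. The paper's $f_w^\gamma(x) = (h_w(Tx))^\gamma$ is genuinely two-parameter: the sign of the interior drift in~\eqref{eq:f-increment-interior} is controlled by $\gamma(\gamma-1)$, so you need $\gamma<1$ for the supermartingale (existence) side and $\gamma>1$ for the submartingale (non-existence) side, while the sign of the boundary drift in~\eqref{eq:f-increment-boundary-small-beta} is controlled by $\beta^\pm - (1-w)\bc$, i.e., by $w$ separately. With $\gamma w = 2s$ fixed, you still need the freedom to dial $\gamma$ past~$1$; freezing $\gamma w = 2s$ with $\gamma = 1$ (as a literal reading of $g^{2s}$ suggests) would make the interior contribution vanish to leading order, and the expansion would degenerate. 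Your ``angular correction'' in $g$ corresponds to $\theta_0$, but that is a third parameter, not a substitute for the split into $\gamma$ and $w$.

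Second, the $\beta^+ \neq \beta^-$ case requires more than you allow. For the non-existence side one needs $f_w^\gamma$ to be a \emph{sub}martingale on both boundary strata, but on the stratum with the smaller exponent, say $\beta^-$, the boundary drift is proportional to $\beta^- - (1-w)\bc$, which is negative whenever $w$ is only slightly above $2s_0$ and $\beta^- < \beta^+$. This is exactly why the paper introduces the modified function $F_w^{\gamma,\nu}$ of~\eqref{eq:F-def}, whose extra term $\lambda x_2 \|Tx\|^{2\nu}$ dominates the increment on $S_B^-$ and fixes the sign there (Lemma~\ref{lem:F-increments}). Your sketch silently assumes the two boundary strata can be handled by the same bare power function, which is false in general.

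Third, the heuristic you give for part~(b) is not just vague but points the wrong way. It is not that the boundary contribution ``washes out'' for $\beta^\pm > 1$; on the contrary, the boundary reflection has a persistent $O(1)$ radially outward component, and this is precisely what~\eqref{eq:f-increment-boundary-big-beta} quantifies: with $w \in (0,1/2)$ and $\gamma > 1$, the boundary increment is $\asymp \|x\|^{\gamma w - 1} \cdot \cos((1-w)\pi/2) > 0$, which is \emph{larger} in order of magnitude than the interior's $\asymp \|x\|^{\gamma w - 2}$ and hence is essential to establishing the submartingale property. The paper's argument then goes through Theorem~2.7.4 of~\cite{mpw}, not an appeal to the classical 2D zero-drift walk. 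Relying on a ``reduction to a zero-drift walk'' would skip exactly the step where one has to check that the persistent boundary drift cooperates with, rather than undermines, the non-existence of moments.
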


\subsection{Related literature}
\label{sec:literature}

The stability properties of reflecting random walks or diffusions in unbounded domains in $\R^d$ have been studied for many years.
A pre-eminent place in the development of the theory is occupied by processes in the quadrant $\RP^2$
or quarter-lattice $\ZP^2$, due to applications arising in queueing theory and other areas. Typically, the process is assumed to be maximally homogeneous in the sense that the transition mechanism is
fixed in the interior and on each of the two half-lines making up the boundary.
Distinct are the cases where the motion in the interior of the domain has \emph{non-zero} or \emph{zero drift}.

It was in 1961, in part motivated by queueing models, that Kingman~\cite{kingman}
proposed a general approach to the non-zero drift problem on $\ZP^2$ via Lyapunov functions and Foster's Markov chain classification criteria~\cite{foster}.
A formal statement of the classification was given in the early 1970s by
Malyshev, who developed both an analytic approach~\cite{malyshev70} as well as the Lyapunov function one~\cite{malyshev72}
(the latter, Malyshev reports, prompted by a question of Kolmogorov). 
Generically, the classification depends on the drift vector in the interior and the two boundary reflection angles.
The Lyapunov function approach was further developed,
so that the bounded jumps condition in~\cite{malyshev72} could be relaxed to finiteness of second moments~\cite{mikhailov,fayolle,rosenkrantz} and, ultimately,
of first moments~\cite{fmm,vl,zachary}. The analytic approach was also subsequently developed~\cite{fim},
and although it seems to be not as robust as the Lyapunov function approach (the analysis in~\cite{malyshev70} was restricted to nearest-neighbour jumps), 
when it is applicable it can yield very precise
information: see e.g.~\cite{fr} for a recent application in the continuum setting. Intrinsically more complicated
 results are available for the non-zero drift case in $\ZP^3$~\cite{menshikov} and $\ZP^4$~\cite{im}.

The recurrence classification for the case of \emph{zero-drift}  reflecting random walk in $\ZP^2$ was given in the early 1990s in~\cite{afm,fmm92}; see also~\cite{fmm}.
In this case, generically, the classification depends on the increment covariance matrix in the interior as well as the two boundary reflection angles.
Subsequently,  using a semimartingale approach extending work of Lamperti~\cite{lamp3},
passage-time moments were studied in~\cite{aim},
with refinements provided in~\cite{aiSPA,aiTPA}. 

Parallel continuum developments 
concern reflecting Brownian motion in wedges in $\R^2$.
In the zero-drift case 
 with general (oblique) reflections, in the 1980s Varadhan and Williams~\cite{vw} had showed that the process was well-defined,
and then Williams~\cite{williams} gave the recurrence classification, thus preceding the random walk results of~\cite{afm,fmm92},
and, in the recurrent cases, asymptotics of stationary measures~(cf.~\cite{aiB} for the discrete setting). 
Passage-time moments were later studied in~\cite{mw,br}, by providing a continuum version of the results of~\cite{aim}, and in~\cite{aiSPA}, using
discrete approximation~\cite{aspandiiarov}.
The non-zero drift case was studied by Hobson and Rogers~\cite{hr}, who gave an analogue of Malyshev's theorem in the continuum setting.

For domains like our $\cD$, Pinsky~\cite{pinsky} established recurrence in the case of reflecting Brownian motion with
normal reflections and standard covariance matrix in the interior. The case of general covariance matrix and oblique reflection
does not appear to have been considered, and neither has the analysis of passage-time moments.
The somewhat related problem of the asymptotics of the \emph{first exit time} $\tau_e$ of planar Brownian motion from domains like our $\cD$ has been considered~\cite{bc,bds,li}: in the case where $\beta^+ = \beta^- = \beta \in (0,1)$,  then $\log \Pr ( \tau_e > t )$ is bounded above and below by constants times $-t^{(1-\beta)/(1+\beta)}$:
see~\cite{li} and (for the case $\beta =1/2$)~\cite{bds}.

\subsection{Overview of the proofs}
\label{sec:outline}

The basic strategy is to construct suitable Lyapunov functions $f : \R^2 \to \R$
that satisfy appropriate semimartingale (i.e., drift) conditions on $\Exp_x [ f (\xi_1 ) - f(\xi_0 ) ]$
for $x$ outside a bounded set. 
In fact,
since the Lyapunov functions that we use are most suitable for the case where the interior increment
covariance matrix is $\Sigma = I$, the identity, we first apply a linear transformation $T$ of $\R^2$
and work with $T \xi$. The linear transformation is described in~\S\ref{sec:linear}. 
Of course, one could combine these two steps and work directly with the Lyapunov function given by the composition
 $f \circ T$ for the appropriate $f$. However, for reasons of intuitive understanding and computational convenience, we prefer to separate the two steps.

Let $\beta^\pm < 1$. Then for $\alpha^+ = \alpha^- = 0$, the reflection angles are both pointing 
essentially vertically, with an asymptotically small component in the positive $x_1$ direction.
After the linear transformation~$T$, the reflection angles are no longer almost vertical, but instead are almost opposed
at some oblique angle, where the deviation from direct opposition is again asymptotically small, and in the positive $x_1$ direction.
For this reason, the case $\alpha^+ = -\alpha^- = \alpha \neq 0$ is not conceptually different
from the simpler case where $\alpha = 0$, because after the linear transformation, both cases are oblique.
In the case $\alpha \neq 0$, however, the details are more involved as both $\alpha$ and the value of the
correlation $\rho$ enter into the analysis of the Lyapunov functions, which is presented in~\S\ref{sec:lyapunov}, and is the main technical work of the paper.
For $\beta^\pm > 1$, intuition is provided by the case of reflection in the half-plane (see e.g.~\cite{williams} for the Brownian case).

Once the Lyapunov function estimates are in place, the proofs of the main theorems are given in \S\ref{sec:proofs},
using some semimartingale results which are variations on those from~\cite{mpw}. The appendix (\S\ref{sec:threshold})
contains the proof of Proposition~\ref{prop:bc-max-min} on the properties of the threshold function $\bc$ defined at~\eqref{eq:betac}.

\section{Linear transformation}
\label{sec:linear}

The inwards pointing normal vectors to $\partial \cD$ at $(x_1, d^\pm (x_1))$ are
\[ n^\pm (x_1) = \frac{1}{r^\pm(x_1)} \begin{pmatrix} a^\pm \beta^\pm x_1^{\beta^\pm -1} \\ \mp 1 \end{pmatrix},
\text{ where } r^\pm (x_1 ) := \sqrt{ 1 + (a^\pm )^2 (\beta^\pm)^2 x_1^{2 \beta^\pm -2} } .\]
Define 
\[ n^\pm_\perp (x_1) :=  \frac{1}{r^\pm(x_1)} \begin{pmatrix} \pm 1 \\ a^\pm \beta^\pm x_1^{\beta^\pm -1}  \end{pmatrix} .\]
Recall that $n^\pm (x_1, \alpha^\pm)$
is the unit vector at angle $\alpha^\pm$ to $n^\pm (x_1)$,
with positive angles measured anticlockwise (for $n^+$) or clockwise (for $n^-$).
Then (see Figure~\ref{fig:oblique} for the case of $n^+$) we have
$n^\pm (x_1, \alpha^\pm ) = n^\pm (x_1) \cos \alpha^\pm + n_\perp^\pm (x_1) \sin \alpha^\pm$,
so
\[ n^\pm ( x_1, \alpha^\pm ) = \frac{1}{r^\pm(x_1)} 
\begin{pmatrix}  \sin \alpha^\pm + a^\pm \beta^\pm x_1^{\beta^\pm -1} \cos \alpha^\pm
 \\ \mp \cos \alpha^\pm \pm a^\pm \beta^\pm x_1^{\beta^\pm -1} \sin \alpha^\pm  \end{pmatrix} .\]
In particular, if $\alpha ^+ = - \alpha^- = \alpha$,
\begin{equation}
\label{eq:oblique-vector}
 n^\pm ( x_1, \alpha^\pm ) = \frac{1}{r^\pm(x_1)} 
\begin{pmatrix} \pm \sin \alpha  + a^\pm \beta^\pm x_1^{\beta^\pm -1} \cos \alpha 
 \\ \mp \cos \alpha  + a^\pm \beta^\pm x_1^{\beta^\pm -1} \sin \alpha   \end{pmatrix} 
=: \begin{pmatrix} n_1^\pm ( x_1, \alpha^\pm ) 
 \\  n_2^\pm ( x_1, \alpha^\pm )   \end{pmatrix} .\end{equation}
Recall that $\Delta = \xi_1 - \xi_0$. Write $\Delta = (\Delta_1,\Delta_2)$ in components.

\begin{figure}
\begin{center}
\begin{tikzpicture}[domain=0:10, scale = 1.4,decoration={
    markings,
    mark=at position 0.6 with {\arrow[scale=2]{>}}}]
\draw[black, line width = 0.40mm]   plot[smooth,domain=1:7,samples=50] ({\x},  {(\x)^(1/2)});
\draw (4,2) -- (6,2.5);
\draw (2,1.5) -- (4,2);
\draw[postaction={decorate}] (4,2) -- (4.6,-0.4);
\draw (3.8,1.95) -- (3.86,1.71);
\draw (4.06,1.76) -- (3.86,1.71);
\draw (4.5,1.5) arc (-45:-67.5:1);
\draw[postaction={decorate}] (4,2) -- (6,0);
\draw[postaction={decorate}] (4.6,-0.4) -- (6,0);
\draw (4.7,-0.22) -- (4.56,-0.26);
\draw (4.7,-0.22) -- (4.74,-0.36);
\node at (4.48, 1.2)       {$\alpha^+$};
\node at (3.5,2.4)       {$(x_1, a^+x_1^{\beta^+})$};
\node at (3.3,0.5)       {$n^+(x_1) \cos \alpha^+$};
\node at (6.2,-0.4)       {$n_\perp^+(x_1) \sin \alpha^+$};
\node at (6,1)       {$n^+(x_1,\alpha^+)$};
\end{tikzpicture}
\end{center}
\caption{\label{fig:oblique} Diagram describing oblique reflection at angle $\alpha^+ >0$.}
\end{figure}
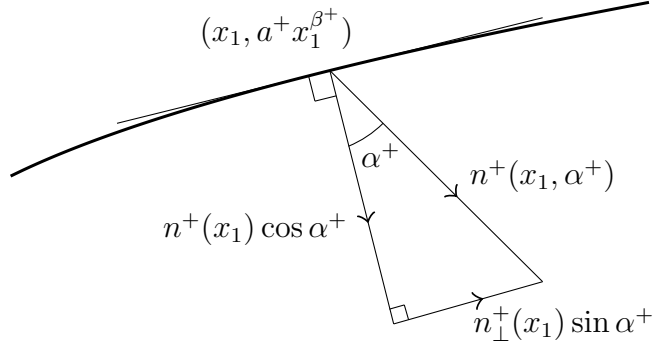

\begin{lemma}
\label{lem:increments}
Suppose that~\eqref{ass:reflection}
holds, with $\alpha^+  = -\alpha^- = \alpha$ and $\beta^+, \beta^- \geq 0$. 
If $\beta^\pm <1$, then, for $x \in S_B^\pm$, as $\| x \| \to \infty$,
\begin{align}
\label{eq:drift-boundary-small-beta1}
  \Exp_x   \Delta_1  & = \pm \mu^\pm (x) \sin \alpha + a^\pm \beta^\pm \mu^\pm (x) x_1^{\beta^\pm -1} \cos \alpha + O ( \| x\|^{2\beta^\pm -2})  + O ( \|x \|^{-1});\\
	\label{eq:drift-boundary-small-beta2}
	 \Exp_x   \Delta_2  & = \mp \mu^\pm (x)  \cos \alpha + a^\pm \beta^\pm \mu^\pm (x) x_1^{\beta^\pm -1} \sin \alpha   + O ( \| x\|^{2\beta^\pm -2})  + O ( \|x \|^{-1}) .\end{align}
If $\beta^\pm > 1$, then, for $x \in S_B^\pm$,  as $\| x \| \to \infty$,
\begin{align}
\label{eq:drift-boundary-big-beta1}
  \Exp_x   \Delta_1  & = \mu^\pm (x)  \cos \alpha \pm \frac{\mu^\pm (x)  \sin \alpha}{a^\pm \beta^\pm} x_1^{1-\beta^\pm}  + O (x_1^{2-2\beta^\pm}) + O ( \|x \|^{-1});\\
	\label{eq:drift-boundary-big-beta2}
	 \Exp_x   \Delta_2  & = \mu^\pm (x)  \sin \alpha \mp \frac{\mu^\pm (x) \cos \alpha}{a^\pm \beta^\pm} x_1^{1-\beta^\pm}   + O (x_1^{2-2\beta^\pm} )  + O ( \|x \|^{-1}).\end{align}
\end{lemma}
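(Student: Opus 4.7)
The plan is to substitute the explicit expression~\eqref{eq:oblique-vector} for $n^\pm(x_1, \alpha^\pm)$ into the drift bound provided by~\eqref{ass:reflection}, and to Taylor-expand the prefactor $1/r^\pm(x_1) = (1 + (a^\pm)^2(\beta^\pm)^2 x_1^{2\beta^\pm-2})^{-1/2}$ asymptotically in the two regimes $\beta^\pm < 1$ and $\beta^\pm > 1$. Componentwise, \eqref{ass:reflection} gives $\Exp_x \Delta_i = \mu^\pm(x) \, n_i^\pm(x_1, \alpha^\pm) + O(\|x\|^{-1})$ on $S_B^\pm$ as $\|x\| \to \infty$; since the statement keeps $\mu^\pm(x)$ as an explicit multiplier, no further control on $\mu^\pm$ is required, and it suffices to expand each component of $n^\pm(x_1, \alpha^\pm)$ to sufficient precision.

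For $\beta^\pm < 1$, the exponent $x_1^{2\beta^\pm-2}$ vanishes as $x_1 \to \infty$, and a first-order Taylor expansion gives $1/r^\pm(x_1) = 1 + O(x_1^{2\beta^\pm-2})$. Multiplying by the numerator vector in~\eqref{eq:oblique-vector}, the leading contributions are precisely $\pm\sin\alpha + a^\pm\beta^\pm x_1^{\beta^\pm-1}\cos\alpha$ and $\mp\cos\alpha + a^\pm\beta^\pm x_1^{\beta^\pm-1}\sin\alpha$, while the cross-product of the $O(x_1^{2\beta^\pm-2})$ error with the $O(x_1^{\beta^\pm-1})$ piece of the numerator is $O(x_1^{3\beta^\pm-3})$, which is dominated by $O(x_1^{2\beta^\pm-2})$. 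To pass from $x_1$ to $\|x\|$ in the error term, I would observe that on $S_B^\pm$ with $\beta^\pm < 1$ one has $|x_2| \leq a^\pm x_1^{\beta^\pm} = o(x_1)$, so $x_1 \sim \|x\|$, giving $O(x_1^{2\beta^\pm-2}) = O(\|x\|^{2\beta^\pm-2})$. Combining with the $O(\|x\|^{-1})$ remainder from~\eqref{ass:reflection} yields~\eqref{eq:drift-boundary-small-beta1}--\eqref{eq:drift-boundary-small-beta2}.

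For $\beta^\pm > 1$, I instead factor $a^\pm\beta^\pm x_1^{\beta^\pm-1}$ out of $r^\pm(x_1)$ to obtain $1/r^\pm(x_1) = \frac{x_1^{1-\beta^\pm}}{a^\pm\beta^\pm} \bigl(1 + O(x_1^{2-2\beta^\pm})\bigr)$. Multiplying through the numerator in~\eqref{eq:oblique-vector}, the $a^\pm\beta^\pm x_1^{\beta^\pm-1}\cos\alpha$ term now produces the leading $\cos\alpha$ (respectively $\sin\alpha$ in the second component), while the $\pm\sin\alpha$ (respectively $\mp\cos\alpha$) term produces the subleading contribution $\pm\frac{\sin\alpha}{a^\pm\beta^\pm} x_1^{1-\beta^\pm}$ (respectively $\mp\frac{\cos\alpha}{a^\pm\beta^\pm} x_1^{1-\beta^\pm}$). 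The $O(x_1^{2-2\beta^\pm})$ relative error multiplied by the $O(1)$ leading term contributes an absolute error of order $x_1^{2-2\beta^\pm}$, which dominates the $O(x_1^{3-3\beta^\pm})$ cross term with the subleading piece (since $\beta^\pm > 1$ makes $3-3\beta^\pm < 2-2\beta^\pm$). Adding the $O(\|x\|^{-1})$ remainder from~\eqref{ass:reflection} recovers~\eqref{eq:drift-boundary-big-beta1}--\eqref{eq:drift-boundary-big-beta2}.

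The argument is essentially bookkeeping; the only real subtlety is to keep careful track of which of the two terms in the numerator of~\eqref{eq:oblique-vector} dominates in each regime (their roles switch at $\beta^\pm = 1$), and to note that the equivalence $x_1 \sim \|x\|$ which is automatic for $\beta^\pm < 1$ need not hold when $\beta^\pm > 1$; this is why the stated error terms are expressed in terms of $\|x\|$ in the former case and $x_1$ in the latter.
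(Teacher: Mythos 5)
Your proposal is correct and matches the paper's own proof almost step for step: both substitute~\eqref{eq:oblique-vector} into the drift condition~\eqref{ass:reflection}, expand $1/r^\pm(x_1)$ as $1 + O(x_1^{2\beta^\pm-2})$ when $\beta^\pm<1$ and as $\frac{x_1^{1-\beta^\pm}}{a^\pm\beta^\pm}\bigl(1+O(x_1^{2-2\beta^\pm})\bigr)$ when $\beta^\pm>1$, and convert $x_1$ to $\|x\|$ via $\|x\|=x_1+o(x_1)$ in the former regime. The bookkeeping of which numerator term dominates and the observation about why the $\beta^\pm>1$ error is left in terms of $x_1$ are exactly the content of the paper's argument.
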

\begin{proof}
Suppose that $x \in S^\pm_B$.
By~\eqref{eq:reflection+}, we have that $\| \Exp_x \Delta -  \mu^\pm (x) n^\pm (x_1,\alpha^\pm)\| = O( \|x\|^{-1})$.
First suppose that $0 \leq \beta^\pm < 1$. 
Then, $1/r^\pm(x_1) = 1 + O (x_1^{2\beta^\pm-2})$, and hence,
by~\eqref{eq:oblique-vector},
\begin{align*}
 n_1^\pm (x_1,\alpha^\pm) & = \pm \sin \alpha + a^\pm \beta^\pm x_1^{\beta^\pm -1} \cos \alpha + O (x_1^{2\beta^\pm -2}) ;\\
 n_2^\pm (x_1,\alpha^\pm) & = \mp \cos \alpha + a^\pm \beta^\pm x_1^{\beta^\pm -1} \sin \alpha   + O (x_1^{2\beta^\pm -2} ).\end{align*}
Then, since $\| x \| = x_1 + o(x_1)$ as $\| x \| \to \infty$ with $x \in \cD$, 
we obtain~\eqref{eq:drift-boundary-small-beta1} and~\eqref{eq:drift-boundary-small-beta2}.

On the other hand, if $\beta^\pm > 1$, then 
\[ \frac{1}{r^\pm (x_1)} = \frac{x_1^{1-\beta^\pm}}{a^\pm \beta^\pm} + O (x_1^{3-3\beta^\pm}), \]
and hence,
by~\eqref{eq:oblique-vector},
\begin{align*}
 n_1^\pm (x_1,\alpha^\pm) & = \cos \alpha \pm \frac{\sin \alpha}{a^\pm \beta^\pm} x_1^{1-\beta^\pm}  + O (x_1^{2-2\beta^\pm}) ;\\
 n_2^\pm (x_1,\alpha^\pm) & = \sin \alpha \mp \frac{\cos \alpha}{a^\pm \beta^\pm} x_1^{1-\beta^\pm}   + O (x_1^{2-2\beta^\pm} ).\end{align*}
The expressions~\eqref{eq:drift-boundary-big-beta1} and~\eqref{eq:drift-boundary-big-beta2} follow.
\end{proof}

It is convenient to introduce a linear transformation of $\R^2$ under which the
asymptotic
increment covariance matrix $\Sigma$ appearing in~\eqref{ass:covariance} is transformed to the identity. 
Define
\[ 
T := \begin{pmatrix} \frac{\sigma_2}{s} & - \frac{\rho}{s\sigma_2} \\
0 & \frac{1}{\sigma_2} \end{pmatrix}, \text{ where } s := \sqrt{ \det \Sigma } = \sqrt{ \sigma_1^2 \sigma_2^2 - \rho^2 } ;\]
recall that $\sigma_2, s >0$, since $\Sigma$ is positive definite.
The choice of $T$ is such that $T \Sigma T^\tra = I$ (the identity), and 
$x \mapsto T x$ leaves the horizontal direction unchanged.
Explicitly,
\begin{equation}
\label{eq:T-transform}  T  \begin{pmatrix} x_1 \\ x_2 \end{pmatrix} = 
\begin{pmatrix}  \frac{\sigma_2}{s} x_1 - \frac{\rho}{s\sigma_2} x_2 \\
\frac{1}{\sigma_2} x_2 \end{pmatrix} .
\end{equation}
Note that $T$ is positive definite, and so $\| Tx\|$ is bounded above and below by positive constants
times $\| x \|$. Also, if $x \in \cD$ and $\beta^+,\beta^- < 1$, the fact that $| x_2 | = o(x_1)$ means that 
$Tx$ has the properties
(i) $(Tx)_1 >0$ for all $x_1$ sufficiently large, and (ii) 
 $| (Tx)_2| = o ( | (Tx)_1 | )$ as $x_1 \to \infty$. See Figure~\ref{fig:transform} for a picture.

\begin{figure}
\begin{center}
\scalebox{0.8}{
\begin{tikzpicture}[domain=0:10, scale = 1.0]
\filldraw (0,0) circle (1.5pt);
\draw[black,->,>=stealth,dashed] (0,0) -- (7,0);
\draw[black,->,>=stealth,dashed] (0,0) -- (-2,5);
\draw[black,->,>=stealth,dashed] (0,0) -- (2.5,1);
\draw[black,dashed] (0,0) -- (2,-5);
\draw[black, line width = 0.40mm]   plot[smooth,domain=0:1.0,samples=100] ({3*(\x)-2*(\x)^(2/3)},  {5*(\x)^(2/3)});
\draw[black, line width = 0.40mm]   plot[smooth,domain=0:1.0,samples=100] ({3*(\x)+2*(\x)^(2/3)},  {-5*(\x)^(2/3)});
\node at (-0.25,0) {$0$};
\node at (6.5,0.7) {$T \begin{pmatrix} x_1 \\ 0 \end{pmatrix}$};
\node at (6,-0.5) {$\theta = 0$};
\node at (3.0,1.2) {$\theta = \theta_2$};
\node at (-0.9,4.5) {$T \begin{pmatrix} 0 \\ x_2 \end{pmatrix}$};
\node at (1.8,4.5) {$T \partial \cD$};
\node at (-0.95,3) {$\theta = \theta_2 + \frac{\pi}{2}$};
\node at (0.4,-4) {$\theta = \theta_2 - \frac{\pi}{2}$};
\end{tikzpicture}}
\hfill
\scalebox{0.8}{
\begin{tikzpicture}[domain=0:10, scale = 1.0]
\filldraw (0,0) circle (1.5pt);
\draw[black,->,>=stealth,dashed] (0,0) -- (7,0);
\draw[black,->,>=stealth,dashed] (0,0) -- (-2,5);
\draw[black,->,>=stealth,dashed] (0,0) -- (2.5,1);
\draw[black,dashed] (0,0) -- (2,-5);
\draw[black, line width = 0.40mm]   plot[smooth,domain=0:1.0,samples=100] ({3*(\x)-2*(\x)^4},  {5*(\x)^4});
\draw[black, line width = 0.40mm]   plot[smooth,domain=0:1.0,samples=100] ({3*(\x)+2*(\x)^4},  {-5*(\x)^4});
\node at (-0.25,0) {$0$};
\node at (6.5,0.7) {$T \begin{pmatrix} x_1 \\ 0 \end{pmatrix}$};
\node at (6,-0.5) {$\theta = 0$};
\node at (3.0,1.2) {$\theta = \theta_2$};
\node at (-0.9,4.5) {$T \begin{pmatrix} 0 \\ x_2 \end{pmatrix}$};
\node at (1.8,4.5) {$T \partial \cD$};
\node at (0.0,3) {$\theta = \theta_2 + \frac{\pi}{2}$};
\node at (0.4,-4) {$\theta = \theta_2 - \frac{\pi}{2}$};
\end{tikzpicture}}
\caption{\label{fig:transform} An illustration of the transformation $T$ with $\rho >0$ acting on a domain $\cD$ with $\beta^+ = \beta^- = \beta$ for
$\beta \in (0,1)$ (\emph{left}) and $\beta >1$ (\emph{right}).
The angle $\theta_2$ is given by $\theta_2 = \arctan (\rho/s)$, measured anticlockwise from the positive horizontal axis.}
\end{center}
\end{figure}
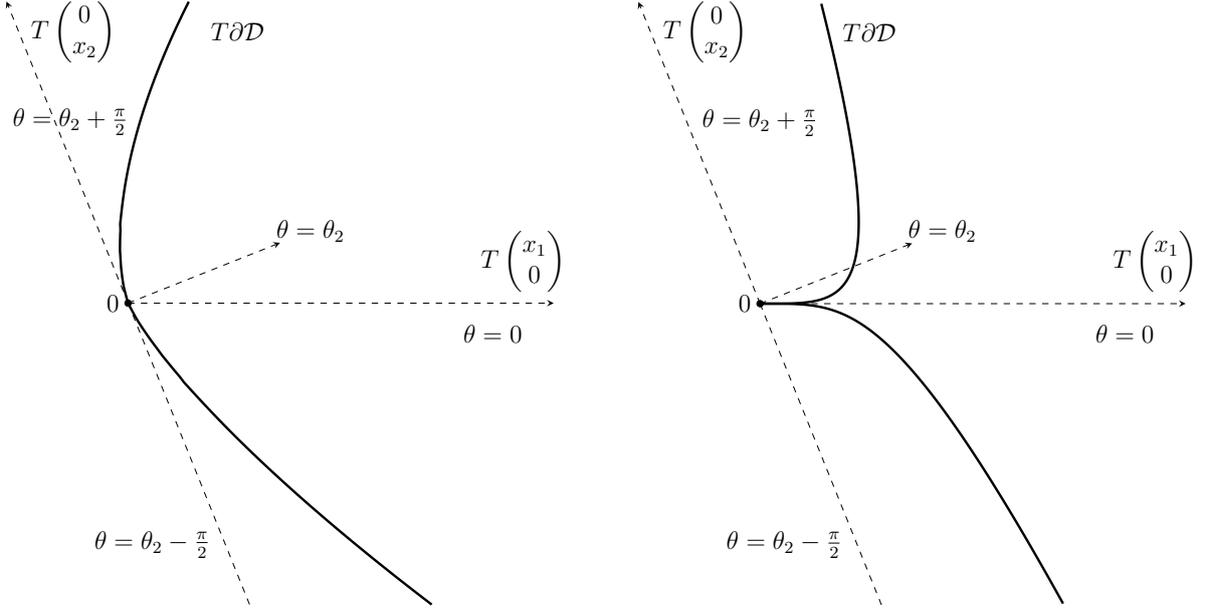 

The next result describes the increment moment properties of the process under the transformation $T$.
For convenience, we set $\tilde \Delta := T \Delta$ for the transformed increment, with components
$\tilde \Delta_i = ( T \Delta )_i$.

\begin{lemma}
\label{lem:transform}
Suppose that~\eqref{ass:zero-drift}, \eqref{ass:reflection}, and~\eqref{ass:covariance}
hold, with $\alpha^+  = -\alpha^-= \alpha$, and $\beta^+, \beta^- \geq 0$. 
Then, if $\| x \| \to \infty$ with $x \in S_I$, 
\begin{equation}
\label{eq:tilde-interior}
 \| \Exp_x    \tilde \Delta  \| = o ( \| x\|^{-1} ), 
\text{ and } \bigl\| \Exp_x (  \tilde \Delta  \tilde \Delta^\tra ) - I \bigr\|_{\mathrm{op}} = o(1) .\end{equation}
If, in addition, \eqref{ass:zero-drift-plus} and~\eqref{ass:covariance-plus} hold  with $\eps>0$, then, if $\| x \| \to \infty$ with $x \in S_I$, 
\begin{equation}
\label{eq:tilde-interior-plus}
 \| \Exp_x    \tilde \Delta  \| = O ( \| x\|^{-1-\eps} ), 
\text{ and } \bigl\| \Exp_x (  \tilde \Delta  \tilde \Delta^\tra ) - I \bigr\|_{\mathrm{op}} = O ( \| x\|^{-\eps} ).\end{equation}
If $\beta^\pm <1$, then, as $\| x \| \to \infty$ with $x \in S_B^\pm$,
\begin{align}
\label{eq:tilde-drift-boundary-small-beta1}
  \Exp_x   \tilde \Delta_1   & = \pm \frac{\sigma_2 \mu^\pm (x)}{s} \sin \alpha
	\pm 	\frac{\rho \mu^\pm(x)}{s\sigma_2} \cos \alpha
	+ \frac{\sigma_2 a^\pm \beta^\pm  \mu^\pm (x)}{s} x_1^{\beta^\pm-1} \cos \alpha \nonumber\\
	& {} \qquad \qquad {} 	- \frac{\rho a^\pm \beta^\pm  \mu^\pm (x)}{s \sigma_2} x_1^{\beta^\pm-1} \sin \alpha
	+ O (\|x\|^{2\beta^\pm -2} ) + O ( \| x \|^{-1} );\\
	\label{eq:tilde-drift-boundary-small-beta2}
	 \Exp_x   \tilde \Delta_2   & = \mp \frac{  \mu^\pm(x)}{\sigma_2} \cos \alpha
	+ \frac{a^\pm \beta^\pm  \mu^\pm (x)}{\sigma_2}  x_1^{\beta^\pm-1} \sin \alpha
	+   O (\|x\|^{2\beta^\pm -2} ) + O ( \| x \|^{-1} ).\end{align}
If $\beta^\pm > 1$, then,  as $\| x \| \to \infty$ with $x \in S_B^\pm$,
\begin{align}
\label{eq:tilde-drift-boundary-big-beta1}
 \Exp_x    \tilde \Delta_1   & =  \frac{\sigma_2  \mu^\pm (x)}{s} \cos \alpha - \frac{\rho \mu^\pm (x)}{s \sigma_2} \sin \alpha
 \pm \frac{\sigma_2 \mu^\pm (x)}{a^\pm \beta^\pm s} x_1^{1-\beta^\pm} \sin \alpha \nonumber\\
	& {} \qquad \qquad {}  \pm \frac{\rho \mu^\pm(x)}{a^\pm \beta^\pm s \sigma_2} x_1^{1-\beta^\pm} \cos \alpha + 
	O( x_1^{2-2\beta^\pm} ) + O ( \| x \|^{-1} ); \\
	\label{eq:tilde-drift-boundary-big-beta2}
	 \Exp_x   \tilde \Delta_2   & = 
	\frac{\mu^\pm (x)}{\sigma_2} \sin \alpha 
	\mp \frac{  \mu^\pm(x)}{a^\pm \beta^\pm \sigma_2} x_1^{1-\beta^\pm} \cos \alpha
	+ O( x_1^{2-2\beta^\pm} ) + O ( \| x \|^{-1} ).\end{align}
\end{lemma}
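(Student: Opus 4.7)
The plan is to reduce the statement to a direct linear-algebra transformation of Lemma~\ref{lem:increments} together with the interior moment assumptions~\eqref{ass:zero-drift}, \eqref{ass:covariance}, \eqref{ass:zero-drift-plus}, \eqref{ass:covariance-plus}. Since $\tilde \Delta = T \Delta$ is linear in $\Delta$, and $T$ is a fixed matrix depending only on $\Sigma$, we have $\Exp_x \tilde \Delta = T \Exp_x \Delta$ and $\Exp_x (\tilde \Delta \tilde \Delta^\tra) = T \Exp_x ( \Delta \Delta^\tra) T^\tra$, so $T$ contributes only constant multiplicative factors to any error term.

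For the interior estimates~\eqref{eq:tilde-interior} and~\eqref{eq:tilde-interior-plus}, first bound $\| \Exp_x \tilde\Delta\| \leq \|T\|_{\mathrm{op}} \|\Exp_x \Delta\|$ and apply~\eqref{ass:zero-drift} or~\eqref{ass:zero-drift-plus}. For the second-moment estimate, the key observation is the defining identity $T \Sigma T^\tra = I$ of the transformation~$T$, which lets one write
\[ \Exp_x ( \tilde\Delta \tilde\Delta^\tra ) - I = T \bigl( \Exp_x (\Delta \Delta^\tra) - \Sigma \bigr) T^\tra ; \]
submultiplicativity of $\|\cdot\|_{\mathrm{op}}$ combined with~\eqref{ass:covariance} or~\eqref{ass:covariance-plus} then yields the claim.

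For the boundary estimates, I would apply $T$ in the explicit coordinate form~\eqref{eq:T-transform} to each component on the right-hand sides of~\eqref{eq:drift-boundary-small-beta1}--\eqref{eq:drift-boundary-small-beta2} (when $\beta^\pm < 1$) or~\eqref{eq:drift-boundary-big-beta1}--\eqref{eq:drift-boundary-big-beta2} (when $\beta^\pm > 1$) supplied by Lemma~\ref{lem:increments}. Explicitly, $\Exp_x \tilde \Delta_1 = (\sigma_2/s)\Exp_x \Delta_1 - (\rho/(s\sigma_2))\Exp_x \Delta_2$ and $\Exp_x \tilde \Delta_2 = \sigma_2^{-1} \Exp_x \Delta_2$; substituting and collecting like terms (the $\sin \alpha$ and $\cos \alpha$ contributions, the $x_1^{\beta^\pm - 1}$ corrections, and so on) directly recovers~\eqref{eq:tilde-drift-boundary-small-beta1}--\eqref{eq:tilde-drift-boundary-big-beta2}. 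The error terms $O(\|x\|^{2\beta^\pm - 2})$, $O(x_1^{2-2\beta^\pm})$, and $O(\|x\|^{-1})$ are preserved up to constants depending only on the fixed entries of $T$, and so remain valid under the big-$O$ notation.

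No deep argument is required beyond Lemma~\ref{lem:increments} and the identity $T \Sigma T^\tra = I$; the main obstacle is simply careful bookkeeping. One must ensure that each subleading contribution is correctly tracked and not absorbed into the leading-order terms prematurely; this matters in particular when $\beta^\pm$ is close to $1$, since then the correction $x_1^{\beta^\pm - 1}$ decays only slowly and must be carried explicitly, while the error $O(\|x\|^{2\beta^\pm - 2})$ may dominate or be dominated by the $O(\|x\|^{-1})$ term depending on the regime. Beyond this, the lemma follows by direct computation.
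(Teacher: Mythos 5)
Your proposal is correct and follows essentially the same route as the paper: linearity plus boundedness of $T$ for the drift, the identity $T\Sigma T^\tra = I$ with submultiplicativity of $\|\cdot\|_{\mathrm{op}}$ for the second moment, and coordinate-wise application of~\eqref{eq:T-transform} to the expansions in Lemma~\ref{lem:increments} for the boundary terms. No substantive differences.
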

\begin{proof}
By linearity,
\begin{equation}
\label{eq:tilde-drift}
 \Exp_x   \tilde \Delta  = T  \Exp_x \Delta,
\end{equation}
which, by~\eqref{ass:zero-drift} or~\eqref{ass:zero-drift-plus}, is, respectively, $o(\|x\|^{-1})$ or~$O ( \| x \|^{-1-\eps})$ for $x \in S_I$. Also, 
since $T \Sigma T^\tra = I$, we have
\[ \Exp_x (   \tilde \Delta  \tilde \Delta^\tra ) - I
= T \Exp_x ( \Delta \Delta^\tra ) T ^\tra - I
= T \left( \Exp_x ( \Delta \Delta^\tra ) - \Sigma \right) T^\tra .\]
For $x \in S_I$, 
the middle matrix in the last product here has norm~$o(1)$ or~$O( \| x \|^{-\eps})$, by~\eqref{ass:covariance} or~\eqref{ass:covariance-plus}.
Thus we obtain~\eqref{eq:tilde-interior} and~\eqref{eq:tilde-interior-plus}.
For $x \in S^\pm_B$,
the claimed results follow on using~\eqref{eq:tilde-drift},~\eqref{eq:T-transform}, 
and the expressions for $\Exp_x \Delta$ in Lemma~\ref{lem:increments}.
\end{proof}

\section{Lyapunov functions}
\label{sec:lyapunov}

For the rest of the paper, we suppose that  $\alpha^+  = -\alpha^- = \alpha$
for some $| \alpha | < \pi/2$.
Our proofs will make use of some carefully chosen
functions of the process.
Most of these functions are most conveniently expressed in polar coordinates.

We write $x = (r, \theta)$ in polar coordinates, with angles measured relative to the 
positive horizontal axis: $r := r(x) := \| x \|$ and $\theta := \theta (x) \in (-\pi,\pi]$
is the angle between the ray through $0$ and $x$ and the ray in the Cartesian direction $(1,0)$,
with the convention that anticlockwise angles are positive. Then $x_1 = r \cos \theta$
and $x_2 = r \sin \theta$.

For $w \in \R$, $\theta_0 \in (-\pi/2,\pi/2)$, and $\gamma \in \R$,
 define 
\begin{equation}
\label{eq:h-w-def}
 h_w (x) := h_w (r,\theta) := r^w \cos (w\theta-\theta_0), ~\text{and}~ f^\gamma_w (x) := ( h_w (Tx) )^\gamma ,\end{equation}
where $T$ is the linear transformation describe at~\eqref{eq:T-transform}.
The functions $h_w$ were used in analysis of processes in wedges in e.g.~\cite{vw,rosenkrantz,aim,mmw}. Since
the $h_w$ are harmonic for the Laplacian (see below for a proof), Lemma~\ref{lem:transform} suggests that $h_w (T \xi_n)$
will be approximately a martingale in $S_I$, and the choice of the geometrical parameter~$\theta_0$ gives us the flexibility to
try to arrange things so that the level curves of $h_w$ are incident to the boundary at appropriate angles relative to the reflection vectors. The level curves of $h_w$ cross the horizontal axis at
angle $\theta_0$: see Figure~\ref{fig:level-curves}, and~\eqref{eq:h-w-derivatives} below.
In the case $\beta^\pm < 1$, the interest is near the horizontal axis,
and  we  take $\theta_0$ to be such that the level curves cut $\partial \cD$ at the reflection angles (asymptotically),
so that $h_w (T \xi_n)$ will be approximately a martingale also in $S_B$. Then adjusting $w$ and $\gamma$ will enable us
to obtain a supermartingale with the properties suitable to apply some Foster--Lyapunov theorems. 
This intuition is solidified in Lemma~\ref{lem:f-increments} below, where we show that 
the parameters $w$, $\theta_0$, and $\gamma$ can be chosen so that $f^\gamma_w (\xi_n)$
satisfies an appropriate supermartingale condition outside a bounded set. For the case $\beta^\pm < 1$, since
we only need to consider $\theta \approx 0$, we could replace these harmonic functions in polar coordinates by suitable polynomial approximations in Cartesian components, but since
we also want to consider $\beta^\pm > 1$, it is convenient to use the functions in the form given. When $\beta^\pm > 1$,
the recurrence classification is particularly delicate, so we must use another function (see~\eqref{eq:h-def} below),
although the functions at~\eqref{eq:h-w-def} will still be used to study passage time moments in that case.

\begin{figure}
\begin{center}
\includegraphics[width=0.5\textwidth]{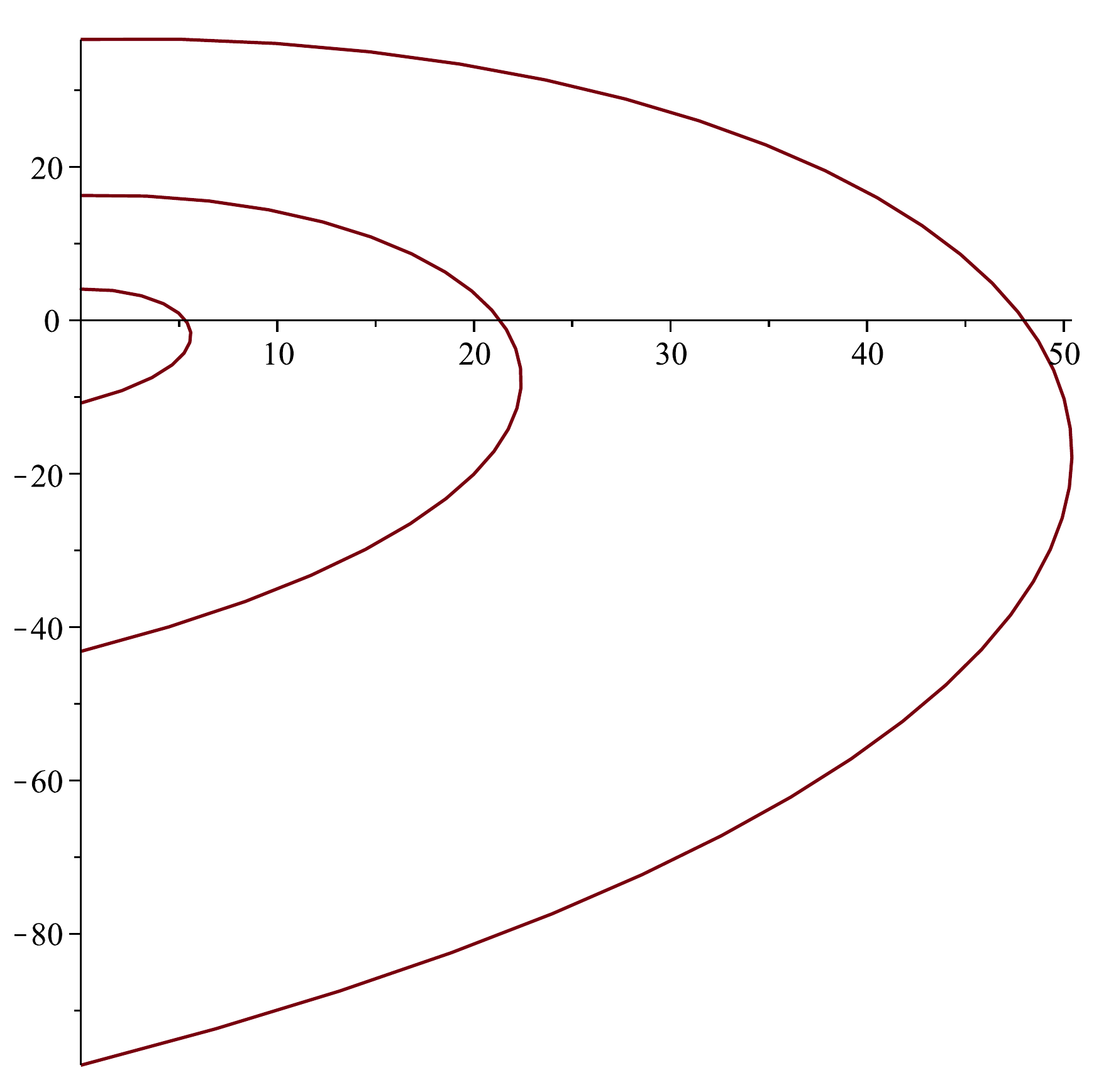}
\caption{\label{fig:level-curves} Level curves of the function $h_w (x)$ with $\theta_0 = \pi/6$ and $w = 1/4$. The level curves
cut the horizontal axis at angle~$\theta_0$ to the vertical.}
\end{center}
\end{figure}

If $\beta^+, \beta^- < 1$, then $\theta (x) \to 0$ as $\| x \| \to \infty$ with $x \in \cD$,
which means that, for any $| \theta_0 | < \pi/2$,
$h_w (x) \geq \delta \| x \|^w$ for some $\delta >0$ and all $x \in S$ with $\|x\|$ sufficiently large.
On the other hand, for $\beta^+, \beta^- >1$, we will restrict to the case with $w > 0$
sufficiently small such that $\cos ( w \theta - \theta_0 )$ is bounded away from zero, uniformly in $\theta \in [-\pi/2,\pi/2]$,
so that we again have the estimate $h_w (x) \geq \delta \| x \|^w$ for some $\delta >0$ and all $x \in \cD$, but where now
$\cD$ is close to the whole half-plane (see Remark~\ref{rem:w-choice}).
In the calculations that follow, we will often use the fact that $h_w (x)$ is bounded above and below by a constant 
times $\| x \|^w$ as $\| x \| \to \infty$ with $x \in \cD$.

We use the notation $D_i := \frac{\ud}{\ud x_i}$ for differentials, and for $f : \R^2 \to \R$ write $Df$ for the vector
with components $(Df)_i = D_i f$.
We use repeatedly
\begin{equation}
\label{eq:polar-diff}
 D_1 r = \cos \theta, ~ D_2 r = \sin \theta, ~ D_1 \theta  = - \frac{\sin \theta}{r}, ~ D_2 \theta = \frac{\cos \theta}{r} .\end{equation}
Define
\begin{equation}
\label{eq:theta-1}
\theta_1 := \theta_1 ( \Sigma, \alpha ) := \arctan \left( \frac{\sigma_2^2}{s} \tan \alpha + \frac{\rho}{s} \right) \in (-\pi/2, \pi/2) .
\end{equation}
For $\beta^\pm > 1$, we will also need 
\begin{equation}
\label{eq:theta-2}
\theta_2 := \theta_2 (\Sigma) := \arctan \left( \frac{\rho}{s} \right) \in (-\pi/2, \pi/2), \end{equation}
and $\theta_3 := \theta_3 ( \Sigma, \alpha ) \in (-\pi,\pi)$ for which 
\begin{equation}
\label{eq:sincos3}
\sin \theta_3 = \frac{s \sin \alpha}{\sigma_2 d}, \text{ and } \cos \theta_3 = \frac{\sigma_2^2 \cos \alpha - \rho \sin \alpha}{\sigma_2 d} ,\end{equation}
where 
\begin{equation}
\label{eq:d-def}
d := d (\Sigma,\alpha) := \sqrt{ \sigma_2^2 \cos^2 \alpha - 2 \rho \sin \alpha \cos \alpha + \sigma_1^2 \sin^2 \alpha } .\end{equation}
The geometric interpretation of $\theta_1, \theta_2$, and $\theta_3$ is as follows.
\begin{itemize}
\item The angle between $(0,\pm 1)$ and $T (0,\pm 1)$ has magnitude $\theta_2$. Thus, if $\beta^\pm < 1$, then $\theta_2$ is, as $x_1 \to \infty$,
the limiting angle of the transformed inwards pointing normal at $x_1$ relative to the vertical.
On the other hand, if $\beta^\pm > 1$, then $\theta_2$ is, as $x_1 \to \infty$,
the limiting angle, relative to the horizontal, of the
inwards pointing normal to $T \partial \cD$.
See Figure~\ref{fig:transform}.
\item The angle between $(0,-1)$ and $T (\sin \alpha, -\cos \alpha)$ is $\theta_1$.
Thus, if $\beta^\pm < 1$, then $\theta_1$ is, as $x_1 \to \infty$,
the limiting angle between the vertical and the transformed reflection vector. Since the normal
in the transformed domain remains asymptotically vertical, $\theta_1$ is in this case the limiting reflection
angle, relative to the normal, after the transformation.
\item The angle between $(1,0)$ and $T ( \cos \alpha, \sin \alpha)$ is $\theta_3$.
Thus, if $\beta^\pm > 1$, then $\theta_3$ is, as $x_1 \to \infty$,
the limiting angle between the horizontal  and the transformed reflection vector.
Since the transformed normal is, asymptotically, at angle $\theta_2$ relative to the horizontal,
 the limiting reflection
angle, relative to the normal, after the transformation is in this case $\theta_3 - \theta_2$.
\end{itemize}
We need two simple facts.

\begin{lemma}
\label{lem:facts}
We have 
(i)~$\inf_{\alpha \in [-\frac{\pi}{2},\frac{\pi}{2}]} d (\Sigma,\alpha) >0$, and~(ii)
$| \theta_3 - \theta_2 | < \pi/2$.
\end{lemma}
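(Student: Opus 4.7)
The plan for part~(i) is to recognise $d^2(\Sigma,\alpha)$ as a quadratic form on $\Sigma$ evaluated at a unit vector. Explicitly, setting $v := (-\sin\alpha,\cos\alpha)^\tra$, one checks by direct expansion that $v^\tra \Sigma v = \sigma_1^2 \sin^2\alpha - 2\rho\sin\alpha\cos\alpha + \sigma_2^2\cos^2\alpha$, which is precisely $d^2(\Sigma,\alpha)$ as defined at~\eqref{eq:d-def}. Since $v$ is a unit vector and $\Sigma$ is positive definite, $v^\tra \Sigma v \geq \lambda_{\min}(\Sigma) > 0$ uniformly in $\alpha$, giving~(i) with the concrete lower bound $d(\Sigma,\alpha) \geq \sqrt{\lambda_{\min}(\Sigma)}$.

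For part~(ii), the plan is to compute $\cos(\theta_3 - \theta_2)$ explicitly from the definitions and show it is strictly positive. First, from $\theta_2 = \arctan(\rho/s)$ together with the identity $\rho^2 + s^2 = \sigma_1^2 \sigma_2^2$ (immediate from $s^2 = \det\Sigma = \sigma_1^2 \sigma_2^2 - \rho^2$), one gets $\cos\theta_2 = s/(\sigma_1\sigma_2)$ and $\sin\theta_2 = \rho/(\sigma_1\sigma_2)$. Substituting these together with~\eqref{eq:sincos3} into
\[
\cos(\theta_3-\theta_2) \;=\; \cos\theta_3 \cos\theta_2 + \sin\theta_3 \sin\theta_2,
\]
the two contributions involving $\rho\sin\alpha$ cancel, and after collecting terms one is left with the clean expression $\cos(\theta_3-\theta_2) = s\cos\alpha/(\sigma_1 d)$. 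By hypothesis $|\alpha|<\pi/2$, and $d>0$ by part~(i), so this is strictly positive.

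The remaining step is to promote the sign of $\cos(\theta_3-\theta_2)$ to a bound on the angle itself. By definition $\theta_2 \in (-\pi/2,\pi/2)$ and $\theta_3 \in (-\pi,\pi)$, so $\theta_3 - \theta_2 \in (-3\pi/2, 3\pi/2)$. Within this interval the only sub-interval on which cosine is positive is $(-\pi/2,\pi/2)$ (the translates $(2k\pi-\pi/2,2k\pi+\pi/2)$ for $k=\pm 1$ lie outside $(-3\pi/2,3\pi/2)$). Hence $|\theta_3-\theta_2| < \pi/2$, as required.

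The only step requiring genuine care is the trigonometric simplification in~(ii): the cancellation of the $\rho\sin\alpha$ cross-terms is clean once one uses $\rho^2 + s^2 = \sigma_1^2\sigma_2^2$, but it is easy to slip on signs, since $\theta_3$ is defined via both its sine and cosine (to pin down the correct quadrant in $(-\pi,\pi)$), and this pinning-down is precisely what makes the final range argument work.
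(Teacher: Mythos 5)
Your part~(ii) follows essentially the same route as the paper: both compute $\cos(\theta_3 - \theta_2) = s\cos\alpha/(\sigma_1 d)$ from the explicit formulas for $\sin\theta_2, \cos\theta_2$ and~\eqref{eq:sincos3}, observe positivity for $|\alpha| < \pi/2$, and then note that the a priori range $|\theta_3 - \theta_2| < 3\pi/2$ forces $|\theta_3 - \theta_2| < \pi/2$.

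For part~(i), however, you take a genuinely different and cleaner route. The paper expands $d^2$ as $\sigma_2^2 + (\sigma_1^2 - \sigma_2^2)\sin^2\alpha - \rho\sin 2\alpha$, invokes the auxiliary calculus Lemma~\ref{lem:calculus} to locate the extrema of this expression over $\alpha$, and then checks that the resulting minimum $\tfrac{1}{2}(\sigma_1^2+\sigma_2^2) - \tfrac{1}{2}\sqrt{(\sigma_1^2-\sigma_2^2)^2 + 4\rho^2}$ is positive via $\rho^2 < \sigma_1^2\sigma_2^2$ (with a separate case for $\sigma_1^2 = \sigma_2^2$). You instead recognise that $d^2(\Sigma,\alpha) = v^\tra \Sigma v$ with $v = (-\sin\alpha,\cos\alpha)^\tra$ a unit vector, so positive-definiteness of $\Sigma$ immediately gives the uniform lower bound $\lambda_{\min}(\Sigma) > 0$. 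This sidesteps the extremum computation entirely and removes the case split. The paper's approach does produce a slightly sharper explicit constant (the formula above rather than $\lambda_{\min}(\Sigma)$ — though in fact $\lambda_{\min}(\Sigma)$ equals $\tfrac{1}{2}(\sigma_1^2+\sigma_2^2) - \tfrac{1}{2}\sqrt{(\sigma_1^2-\sigma_2^2)^2 + 4\rho^2}$, so the bounds coincide; the paper just arrives at it more laboriously), and it reuses Lemma~\ref{lem:calculus} which is needed anyway for Proposition~\ref{prop:bc-max-min}, so the authors get it ``for free''. Both arguments are correct; yours is more direct for the lemma in isolation.
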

\begin{proof}
For~(i), from~\eqref{eq:d-def} we may write
\begin{equation}
\label{eq:d-squared} d^2 = \sigma_2^2 + \left( \sigma_1^2 - \sigma_2^2 \right) \sin^2 \alpha  -  \rho \sin 2\alpha .\end{equation}
If $\sigma_1^2 \neq \sigma_2^2$, then, by Lemma~\ref{lem:calculus}, the extrema over $\alpha \in [-\frac{\pi}{2},\frac{\pi}{2}]$ of~\eqref{eq:d-squared} are
\begin{align*} \sigma_2^2 + \frac{\sigma_1^2 - \sigma_2^2 }{2} \left(1 \pm \sqrt{1 + \frac{4\rho^2}{(\sigma_1^2 - \sigma_2^2)^2}} \right). \end{align*}
Hence
\[ d^2 \geq \frac{\sigma_1^2 +\sigma_2^2}{2} - \frac{1}{2} \sqrt{ (\sigma_1^2 - \sigma_2^2)^2 + 4 \rho^2 } ,\]
which is strictly positive since $\rho^2 < \sigma_1^2 \sigma_2^2$. If $\sigma_1^2 = \sigma_2^2$, then
$d^2 \geq \sigma_2^2 - | \rho |$, and $| \rho | < | \sigma_1 \sigma_2 | = \sigma_2^2$, so $d$ is also strictly positive in that case.

For~(ii), we use the fact that
$\cos (\theta_3 - \theta_2 ) = \cos \theta_3 \cos \theta_2 + \sin \theta_3 \sin \theta_2$,
where, by~\eqref{eq:theta-2}, $\sin \theta_2 = \frac{\rho}{\sigma_1\sigma_2}$
and $\cos \theta_2 = \frac{s}{\sigma_1\sigma_2}$, and~\eqref{eq:sincos3},
to get
$\cos (\theta_3 - \theta_2 ) = \frac{s}{\sigma_1 d} \cos \alpha > 0$. Since $| \theta_3 - \theta_2 | < 3\pi/2$,
it follows that $|\theta_3 - \theta_2| < \pi/2$, as claimed.
\end{proof}

We estimate the expected increments of our Lyapunov functions in two stages:
the main term comes from a Taylor expansion valid when the jump of the walk is not too big
compared to its current distance from the origin, while we bound the (smaller) contribution from big jumps
using the moments assumption~\eqref{ass:bounded-moments}. For the first stage,
let $B_b (x) := \{ z \in \R^2 : \| x - z \| \leq b \}$
denote the (closed) Euclidean ball centred at $x$ with radius $b \geq 0$.
We use the multivariable Taylor theorem in the following form.
Suppose that $f : \R^2 \to \R$ is thrice continuously differentiable
in $B_b (x)$. Recall that  $Df(x)$ is the vector
function whose components are $D_i f(x)$. 
Then, for $y \in B_b (x)$,
\begin{equation}
\label{eq:taylor-second-order}
f (x + y) = f(x) + \langle D f(x) , y \rangle + y_1^2 \frac{D^2_{1} f(x)}{2}   + y_2^2 \frac{D^2_{2} f(x)}{2}  
+ y_1 y_2 D_1 D_2 f(x) + R (x,y) ,\end{equation}
where, for all $y \in B_b(x)$, $| R (x,y ) | \leq C \| y \|^3 R  (x)$ for an absolute constant $C < \infty$ and 
\[ R  (x) :=  \max_{i,j,k} \sup_{ z \in B_b(x) } \left| D_i D_j D_k f (z) \right| .\]
For dealing with the large jumps, we observe the useful fact that if $p>2$
is a constant for which~\eqref{eq:p-moments} holds, then for some constant $C< \infty$, all $\delta \in (0,1)$,
 and all $q \in [0,p]$,
\begin{equation}
\label{eq:big-jumps}
\Exp_x \bigl[ \| \Delta \|^q \1  { \| \Delta \| \geq \| x \|^\delta  } \bigr] \leq C \| x \|^{- \delta(p-q)} ,\end{equation}
for all $\| x\|$ sufficiently large. To see~\eqref{eq:big-jumps}, write $\| \Delta \|^q = \| \Delta \|^p \| \Delta \|^{q-p}$
and use the fact that $\| \Delta \| \geq \| x \|^\delta$ to bound the second factor.

Here is our first main Lyapunov function estimate.

\begin{lemma}
\label{lem:f-increments}
Suppose that~\eqref{ass:bounded-moments}, \eqref{ass:zero-drift}, \eqref{ass:reflection}, and~\eqref{ass:covariance}
hold, with $p>2$,  
$\alpha^+  = -\alpha^- = \alpha$ for $| \alpha | < \pi/2$,
and $\beta^+, \beta^- \geq 0$.
Let $w, \gamma \in \R$ be such that
$2-p < \gamma w < p$. Take
$\theta_0 \in (-\pi/2,\pi/2)$.
Then as $\| x \| \to \infty$ with $x \in S_I$,
\begin{align}
\label{eq:f-increment-interior}
 \Exp [   f^\gamma_w(\xi_{n+1})  - f^\gamma_w( \xi_n) \mid \xi_n = x  ] 
& = \frac{\gamma(\gamma-1)}{2} w^2 ( h_w(Tx) )^{\gamma-2} \| Tx \|^{2w-2}  + o ( \|x\|^{\gamma w -2}).
\end{align}
We separate the boundary behaviour into two cases.
\begin{itemize}
\item[(i)] If $0 \leq \beta^\pm < 1$, take $\theta_0 = \theta_1$ given by~\eqref{eq:theta-1}.
  Then,
as $\| x \| \to \infty$ with $x \in S^\pm_B$,
\begin{align}
\label{eq:f-increment-boundary-small-beta}
& {} \Exp [  f^\gamma_w(\xi_{n+1})  - f^\gamma_w( \xi_n) \mid \xi_n = x ]  \\
& {} \quad {} = \gamma  w \| Tx \|^{w-1} \left( h_w (Tx ) \right)^{\gamma-1}
  \frac{a^\pm \mu^\pm (x) \sigma_2 \cos \theta_1 }{s \cos \alpha} \left(\beta^\pm - (1-w) \bc  +o(1) \right)
x_1^{\beta^\pm-1}
, \nonumber \end{align}
where $\bc$ is given by~\eqref{eq:betac}.
\item[(ii)] If $\beta^\pm > 1$, suppose that $w \in (0,1/2)$ and 
 $\theta_0 = \theta_0 (\Sigma,\alpha,w) = \theta_3 - (1-w)\theta_2$,
where $\theta_2$ and $\theta_3$ are given by~\eqref{eq:theta-2} and~\eqref{eq:sincos3},
such that $\sup_{\theta \in [-\frac{\pi}{2},\frac{\pi}{2}]} |w \theta - \theta_0 | < \pi/2$.
Then, with $d = d(\Sigma,\alpha)$ as defined at~\eqref{eq:d-def},
as $\| x \| \to \infty$ with $x \in S^\pm_B$,
\begin{align}
\label{eq:f-increment-boundary-big-beta}
& {} \Exp [  f^\gamma_w(\xi_{n+1})  - f^\gamma_w( \xi_n) \mid \xi_n = x ] \nonumber\\
& {} \quad {}= \gamma w \| Tx \|^{w-1} \left( h_w (Tx ) \right)^{\gamma-1}
  \frac{d \mu^\pm (x)}{s}  \left( \cos ( (1-w) (\pi/2) )   + o(1) \right)
 .\end{align}
\end{itemize}
\end{lemma}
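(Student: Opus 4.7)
Set $g := h_w^\gamma$, so that $f_w^\gamma(x)=g(Tx)$, and let $\tilde\Delta := T\Delta$. The approach is to apply the second-order Taylor expansion~\eqref{eq:taylor-second-order} to $g$ at $Tx$ with increment $\tilde\Delta$, then split the expectation at $\{\|\Delta\|\le\|x\|^\delta\}$ for a suitable $\delta\in(0,1)$. The large-jump contribution is handled via~\eqref{eq:big-jumps} together with the polynomial bound $|g(y)|\le C(1+\|y\|)^{|\gamma w|}$, while the small-jump Taylor remainder is bounded by the supremum of third derivatives of $g$ (of order $\|Tx\|^{\gamma w-3}$) times $\Exp[\|\Delta\|^3 \mathbf 1\{\|\Delta\|\le\|x\|^\delta\}]=O(\|x\|^{\delta(3-p)_+})$ by H\"older. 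The condition $2-p<\gamma w<p$ is precisely what makes the interval of admissible $\delta$ non-empty, so that both error contributions are $o(\|x\|^{\gamma w-2})$.

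\textbf{Interior.} The key computational fact is that $h_w(r,\theta)=r^w\cos(w\theta-\theta_0)=\mathrm{Re}(z^w e^{-i\theta_0})$ is harmonic, so $\mathrm{tr}\,D^2 h_w\equiv 0$, while a direct polar calculation from~\eqref{eq:polar-diff} gives $|\nabla h_w(y)|^2=w^2\|y\|^{2w-2}$. The chain rule then yields $\mathrm{tr}\,D^2 g(y)=\gamma(\gamma-1)h_w(y)^{\gamma-2}w^2\|y\|^{2w-2}$. On $S_I$, the linear Taylor term is $o(\|x\|^{\gamma w-2})$ by $\|\Exp_x\tilde\Delta\|=o(\|x\|^{-1})$ from~\eqref{eq:tilde-interior}, while the quadratic term $\tfrac12\mathrm{tr}(D^2 g(Tx)\Exp_x[\tilde\Delta\tilde\Delta^\tra])$ differs from $\tfrac12\mathrm{tr}\,D^2 g(Tx)$ by $o(\|x\|^{\gamma w-2})$, again by~\eqref{eq:tilde-interior}. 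This yields~\eqref{eq:f-increment-interior}.

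\textbf{Boundary, $\beta^\pm<1$.} Here the linear term dominates. By Lemma~\ref{lem:transform}, the leading part of $\Exp_x\tilde\Delta$ on $S_B^\pm$ is of the form $\mu^\pm(x)\,v_0^\pm$ for a fixed vector $v_0^\pm$, while $\theta(Tx)$ tends to $0$ like $\pm(sa^\pm/\sigma_2^2)x_1^{\beta^\pm-1}$. I would expand $\nabla h_w(Tx)=w\|Tx\|^{w-1}\bigl[(\cos\theta_0,\sin\theta_0)+(w-1)\theta(Tx)(\sin\theta_0,-\cos\theta_0)+O(\theta(Tx)^2)\bigr]$. The definition~\eqref{eq:theta-1} of $\theta_1$ is precisely the one for which $(\cos\theta_1,\sin\theta_1)\perp v_0^\pm$, so the choice $\theta_0=\theta_1$ kills the $O(1)$ contribution. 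What remains are two $O(x_1^{\beta^\pm-1})$ pieces: one from the sub-leading correction to $\Exp_x\tilde\Delta$ in~\eqref{eq:tilde-drift-boundary-small-beta1}--\eqref{eq:tilde-drift-boundary-small-beta2}, and one from the $(w-1)\theta(Tx)$ rotation of the gradient. Combining them, and using the algebraic identity $q^2\cos^2\alpha=\sigma_2^4\bc$, where $q:=\sqrt{s^2+(\sigma_2^2\tan\alpha+\rho)^2}$ (so that $\cos\theta_1=s/q$), produces the factor $\beta^\pm-(1-w)\bc$ in~\eqref{eq:f-increment-boundary-small-beta}.

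\textbf{Boundary, $\beta^\pm>1$, and the main obstacle.} In this regime both inward normals at $S_B^\pm$ point asymptotically in the original-coordinate direction $(1,0)$, so by Lemma~\ref{lem:transform} the leading drifts at \emph{both} boundaries are a common multiple of $T(\cos\alpha,\sin\alpha)$, of magnitude $d/s$ and polar angle $\theta_3$ by~\eqref{eq:sincos3}--\eqref{eq:d-def}. At the same time, $\theta(Tx)\to\pm\pi/2+\theta_2$ on $S_B^\pm$. The choice $\theta_0=\theta_3-(1-w)\theta_2$ is exactly the one for which the limiting direction of $\nabla h_w(Tx)$ equals $\theta_3\mp(1-w)\pi/2$ on $S_B^\pm$, at angle $(1-w)\pi/2$ from the drift in both cases, so the inner product carries the common factor $\cos((1-w)\pi/2)$ uniformly on $S_B^+\cup S_B^-$. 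The sub-leading drift and angular corrections (of order $x_1^{1-\beta^\pm}$) are absorbed into the $o(1)$ error. The most delicate step of the whole lemma is the cancellation in case~(i): verifying that the two $O(x_1^{\beta^\pm-1})$ pieces reassemble into $\beta^\pm-(1-w)\bc$ requires the non-obvious identity $q^2\cos^2\alpha=\sigma_2^4\bc$, which is exactly how the critical threshold $\bc$ from~\eqref{eq:betac} surfaces within the Lyapunov-function analysis.
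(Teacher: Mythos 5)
Your proposal is correct and follows essentially the same route as the paper: a second-order Taylor expansion of $h_w^\gamma$ at $Tx$ in the increment $\tilde\Delta$, with the expectation split at $\{\|\Delta\|\le\|x\|^\delta\}$ and the tail handled via the moment bound, harmonicity of $h_w$ together with $|\nabla h_w|^2=w^2 r^{2w-2}$ for the interior, and the linear term with the choices $\theta_0=\theta_1$ (respectively $\theta_0=\theta_3-(1-w)\theta_2$) to cancel the leading contribution at the boundary. Your identity $q^2\cos^2\alpha=\sigma_2^4\,\bc$ with $\cos\theta_1=s/q$, $q=\sqrt{s^2+(\sigma_2^2\tan\alpha+\rho)^2}$, is a clean way to state the ``further cancellation and simplification'' that the paper performs to extract $A_2=\sigma_2^2(\beta^\pm-(1-w)\bc)/\cos^2\alpha$, and your reading of the $\beta^\pm>1$ geometry (drift at polar angle $\theta_3$ with magnitude $d\mu^\pm(x)/s$, gradient at angle $\theta_3\pm(1-w)\pi/2$, giving the common factor $\cos((1-w)\pi/2)$) matches the paper's computation, including the observation $A_4=0$.
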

\begin{remark}
\label{rem:w-choice}
We can choose $w>0$ small enough so that $| \theta_3 - (1-w)\theta_2 | < \pi/2$, 
by Lemma~\ref{lem:facts}(ii), and so if $\theta_0 = \theta_3 - (1-w)\theta_2$,
we can always choose $w>0$ small enough so that 
$\sup_{\theta \in [-\frac{\pi}{2},\frac{\pi}{2}]} |w \theta - \theta_0 | < \pi/2$,
as required for the $\beta^\pm > 1$ part of Lemma~\ref{lem:f-increments}.
\end{remark}
\begin{proof}[Proof of Lemma~\ref{lem:f-increments}.]
Differentiating~\eqref{eq:h-w-def} and using~\eqref{eq:polar-diff} we see that
\begin{align}
 D_1 h_w(x) & = w r^{w-1} \cos \left( (w-1) \theta - \theta_0 \right) , \text{ and } \nonumber\\
 D_2 h_w (x) & =  - w r^{w-1} \sin \left( (w-1) \theta - \theta_0 \right)
\label{eq:h-w-derivatives}  .\end{align}
Moreover,
\[ D_1^2 h_w(x) =  w (w-1) r^{w-2} \cos \left( (w-2)\theta - \theta_0 \right) = - D_2^2 h_w(x) ,\]
verifying that $h_w$ is harmonic. 
Also, for any $i,j,k$, $| D_i D_j D_k h_w(x) | = O (r^{w-3} )$.
Writing $h_w^\gamma (x) := ( h_w (x) )^\gamma$, we also have that
$D_i h_w^\gamma (x) = \gamma h_w^{\gamma-1} (x) D_i h_w (x)$, that
\begin{align*}
 D_i D_j h_w^\gamma (x) & = \gamma h_w^{\gamma-1} (x) D_i D_j h_w (x) + \gamma (\gamma -1) h_w^{\gamma -2} (x) ( D_i h_w (x) ) (D_j h_w(x)) ,\end{align*}
and $| D_i D_j D_k h_w^\gamma (x) | = O ( r^{\gamma w -3} )$.
We apply Taylor's formula~\eqref{eq:taylor-second-order} in the ball $B_{r/2}(x)$
together with the harmonic property of $h_w$, to obtain, for $y \in B_{r/2}(x)$,
\begin{align}
\label{eq:h-w-taylor}
 h^\gamma_w (x +y )  
& = 
  h^\gamma_w (x )
+  \gamma \langle D h_w(x), y \rangle  h_w ^{\gamma -1} (x) 
+ \frac{\gamma (\gamma -1)}{2} \langle D h_w(x), y \rangle^2    h^{\gamma-2}_w (x )  \nonumber\\
& {} \qquad {} +
\gamma \left(
\frac{(y_1^2 - y_2^2) D_1^2 h_w(x)}{2}  
+   y_1 y_2 D_{1} D_{2} h_w(x) \right) h^{\gamma-1}_w (x ) + R(x,y) , 
\end{align}
where $| R(x,y) | \leq C \| y \|^3 \| x \|^{\gamma w -3 }$, using the fact that $h_w (x)$ is bounded
above and below by a constant times $\| x\|^w$.

Let $E_x := \{ \|  \Delta \| < \| x \|^\delta \}$, where throughout the proof we fix a constant $\delta$ satisfying
\begin{equation}
\label{eq:delta-choice}
  \frac{\max \{ 2 , \gamma w , 2 -\gamma w \}}{p} < \delta < 1 ; \end{equation}
 such a choice of $\delta$ is possible since $p >2$ and $ 2 - p < \gamma w < p$.
If $\xi_0 = x$ and $E_x$ occurs, then $Tx + \tilde \Delta \in B_{r/2} (Tx)$ for all $\|x\|$ sufficiently large.
Thus, conditioning on $\xi_0 = x$, on the event $E_x$ we may use  
  the expansion in~\eqref{eq:h-w-taylor} for $h^\gamma_w (Tx + \tilde \Delta )$, which, after taking expectations,
	yields
\begin{align}
\label{eq:f-gamma-big-display}
& {} \Exp_x \bigl[ ( f_w^\gamma (\xi_1) -f_w^\gamma (\xi_0) )\2 {E_x} \bigr]
 = \gamma  \left( h_w (Tx ) \right)^{\gamma-1}  \Exp_x \bigl[ \langle D h_w(Tx), \tilde \Delta \rangle \2 {E_x} \bigr] 
\nonumber\\
& {} \qquad {}  +
 \gamma \left( h_w (Tx ) \right)^{\gamma-1} \left[
\frac{ D_1^2 h_w(Tx) \Exp_x \bigl[ ( \tilde \Delta_1^2 - \tilde \Delta_2^2) \2 {E_x} \bigr]}{2} + 
D_{1} D_{2} h_w(Tx) \Exp_x  \bigl[ \tilde \Delta_1 \tilde \Delta_2  \2 {E_x} \bigr]    \right]
\nonumber\\
& {} \qquad {} + \frac{\gamma (\gamma -1)}{2}  \left( h_w (Tx ) \right)^{\gamma-2} \Exp_x \bigl[ \langle D h_w(Tx), \tilde \Delta \rangle^2   \2 {E_x} \bigr] 
 + \Exp_x \bigl[ R(Tx,\tilde \Delta) \2 {E_x} \bigr]
.\end{align}
Let $p' = p \wedge 3$, so that~\eqref{eq:p-moments} also holds for $p' \in (2,3]$.
Then, writing $\| \tilde \Delta \|^3 =  \| \tilde \Delta \|^{p'}  \| \tilde \Delta \|^{3-p'}$,
\[  \Exp_x \bigl[ | R(Tx,\tilde \Delta) | \2 {E_x} \bigr] \leq C \| x \|^{\gamma w -3 + (3-p')\delta}  \Exp_x \bigl[ \| \tilde \Delta \|^{p'}  \bigr]
= o ( \| x \|^{\gamma w -2} ) ,\]
since $(3-p')\delta < 1$.
If $x \in S_I$, then~\eqref{eq:tilde-interior} shows that
 $| \Exp_x \langle D h_w(Tx), \tilde \Delta \rangle | = o ( \| x \|^{w-2} )$, so
\[
  \Exp_x \bigl| \langle D h_w(Tx), \tilde \Delta \rangle   \2 {E_x} \bigr|
\leq C \| x \|^{w-1} \Exp_x ( \| \Delta \|  \2 {E^\rc_x} ) + o ( \| x \|^{w-2} ) .\]
Note that, by~\eqref{eq:delta-choice}, $\delta > \frac{2}{p} > \frac{1}{p-1}$.
Then, using the $q=1$ case of~\eqref{eq:big-jumps}, we get
\begin{equation}
\label{eq:drift-small-jump}
\Exp_x \bigl| \langle D h_w(Tx), \tilde \Delta \rangle    \2 {E_x} \bigr| = o ( \| x \|^{w-2} ) .\end{equation}
A similar argument using the $q=2$ case of~\eqref{eq:big-jumps} gives
\[ \Exp_x \bigl[ \langle D h_w(Tx), \tilde \Delta \rangle^2   \2 {E^\rc_x} \bigr] \leq C \| x \|^{2w-2-\delta(p-2)} = o ( \| x \|^{2w-2} ).\] 
If  $x \in S_I$, then~\eqref{eq:tilde-interior}   shows that
$\Exp_x (\tilde \Delta_1^2 - \tilde \Delta_2^2)$ and $\Exp_x ( \tilde \Delta_1 \tilde \Delta_2 )$ are both $o(1)$, and
hence, by the~$q=2$ case of~\eqref{eq:big-jumps} once more, we see that $\Exp_x  [ | \tilde \Delta_1^2 - \tilde \Delta_2^2| \2 {E_x} ]$
and $\Exp_x [| \tilde \Delta_1 \tilde \Delta_2 | \2 {E_x} ]$ are both $o(1)$.
Moreover,~\eqref{eq:tilde-interior} also  shows that
\begin{align*} \Exp_x \langle D h_w(Tx), \tilde \Delta \rangle^2 & = \Exp_x \left( ( Dh_w (Tx) )^\tra \tilde \Delta \tilde \Delta^\tra Dh_w (Tx) \right)\\
& = ( Dh_w (Tx) )^\tra  Dh_w (Tx) + o ( \| x \|^{2w-2} ) \\
& = ( D_1 h_w(Tx) )^2 + ( D_2 h_w(Tx) )^2 + o ( \| x \|^{2w-2} ) .\end{align*}
Putting all these estimates into~\eqref{eq:f-gamma-big-display} we get, for $x \in S_I$,
\begin{align}
\label{eq:f-gamma-small-jump}
\Exp_x \bigl[ ( f_w^\gamma (\xi_1) -f_w^\gamma (\xi_0) )\2 {E_x} \bigr]
& = \frac{\gamma (\gamma -1)}{2}  \left( h_w (Tx ) \right)^{\gamma-2} 
\left( ( D_1 h_w(Tx) )^2 + ( D_2 h_w(Tx) )^2 \right) \nonumber\\
& {} \qquad {}   + o ( \| x \|^{\gamma w - 2} ) .\end{align}
On the other hand, given $\xi_0 = x$, if $\gamma w \geq 0$, by the triangle inequality,
\begin{align}
\label{eq:f-crude-bound}
\bigl| f_w^\gamma (\xi_1) -f_w^\gamma (x) \bigr| 
& \leq \| T\xi_1 \|^{\gamma w} + \| Tx \|^{\gamma w}
\leq 2 \bigl( \| T\xi_1 \| + \| Tx \| \bigr)^{\gamma w} \nonumber\\
& \leq 2 \bigl( 2 \| Tx \| + \| \tilde \Delta \| \bigr)^{\gamma w} .\end{align}
It follows from~\eqref{eq:f-crude-bound} that $| f_w^\gamma (\xi_1) -f_w^\gamma (x) | \2 {E_x^\rc } \leq C \| \Delta \|^{\gamma w /\delta}$, 
for some constant $C < \infty$ and all $\| x \|$ sufficiently large. Hence
\[ \Exp_x \bigl| ( f_w^\gamma (\xi_1) -f_w^\gamma (\xi_0) )\2 {E^\rc_x} \bigr|
\leq C \Exp_x \bigl[ \| \Delta \|^{\gamma w /\delta} \2 {E^\rc_x} \bigr] .\]
Since $\delta > \frac{\gamma w}{p}$, by~\eqref{eq:delta-choice}, we may apply~\eqref{eq:big-jumps}
with $q = \frac{\gamma w}{\delta}$ to get
\begin{align}
\label{eq:f-gamma-big-jump} \Exp_x \bigl| ( f_w^\gamma (\xi_1) -f_w^\gamma (\xi_0) )\2 {E^\rc_x} \bigr|
= O ( \| x \|^{\gamma w - \delta p } ) = o ( \| x \|^{\gamma w - 2} ),\end{align}
since $\delta > \frac{2}{p}$.
If $w \gamma < 0$, then we simply use the fact that $f_w^\gamma$ is uniformly bounded to get
\[ \Exp_x \bigl| ( f_w^\gamma (\xi_1) -f_w^\gamma (\xi_0) )\2 {E^\rc_x} \bigr|
\leq C \Pr_x  ( E^\rc_x )
= O ( \| x \|^{-\delta p} ) ,\]
by the $q=0$ case of~\eqref{eq:big-jumps}. Thus~\eqref{eq:f-gamma-big-jump}
holds in this case too, since $\gamma w > 2 -\delta p$ by choice of $\delta$ at~\eqref{eq:delta-choice}.
Then~\eqref{eq:f-increment-interior} follows from
combining~\eqref{eq:f-gamma-small-jump}
and~\eqref{eq:f-gamma-big-jump} with~\eqref{eq:h-w-derivatives}.

Next suppose that $x \in S_B$. Truncating~\eqref{eq:h-w-taylor}, we see that for all $y \in B_{r/2} (x)$,
\begin{equation}
\label{eq:h-w-taylor-truncated}
 h^\gamma_w (x +y )  
  = 
  h^\gamma_w (x )
+  \gamma \langle D h_w(x), y \rangle  h_w ^{\gamma -1} (x) 
+ R(x,y) ,\end{equation}
where now $|R(x,y) | \leq C \| y \|^2 \| x \|^{\gamma w -2 }$.
It follows from~\eqref{eq:h-w-taylor-truncated} and~\eqref{ass:bounded-moments} that
\begin{align*}
\Exp_x \bigl[ ( f_w^\gamma (\xi_1) -f_w^\gamma (\xi_0) ) \2 {E_x} \bigr]
& = \gamma  h^{\gamma -1}_w (Tx )   \Exp_x \bigl[ \langle D h_w(Tx), \tilde \Delta \rangle \2 {E_x} \bigr] 
+ O ( \| x \|^{\gamma w -2} ).
\end{align*}
By the $q=1$ case of~\eqref{eq:big-jumps}, since $\delta > \frac{1}{p-1}$, we see 
that $\Exp_x [ \langle D h_w(Tx), \tilde \Delta \rangle \2 {E^\rc_x} ] = o ( \| x \|^{w-2} )$, 
while the estimate~\eqref{eq:f-gamma-big-jump} still applies, so that
\begin{align}
\label{eq:f-boundary}
\Exp_x \bigl[   f_w^\gamma (\xi_1) -f_w^\gamma (\xi_0)   \bigr]
& = \gamma  h^{\gamma -1}_w (Tx )   \Exp_x  \langle D h_w(Tx), \tilde \Delta \rangle  
+ O ( \| x \|^{\gamma w -2} ).
\end{align}
From~\eqref{eq:h-w-derivatives} we have
\begin{equation}
\label{eq:h-w-D-vector}
 D h_w (Tx) = w \| Tx\|^{w-1} \begin{pmatrix} \cos ( (1-w)\theta(Tx) + \theta_0 ) \\
 \sin ( (1-w)\theta(Tx) + \theta_0 ) \end{pmatrix} .\end{equation}
First suppose that $\beta^\pm < 1$. Then, by~\eqref{eq:T-transform}, for $x \in S_B^\pm$, $x_2 = \pm a^\pm x_1^{\beta^\pm} + O(1)$ and
\[ \sin \theta (Tx) = \pm \frac{s a^\pm}{\sigma_2^2} x_1^{\beta^\pm -1} + O (x_1^{2\beta^\pm -2} ) + O( x_1^{-1} ) .\]
Since $\arcsin z = z + O (z^3)$ as $z \to 0$, it follows that
\[ \theta (Tx) = \pm \frac{s a^\pm}{\sigma_2^2} x_1^{\beta^\pm -1} + O (x_1^{2\beta^\pm -2} ) + O( x_1^{-1} ) .\]
Hence
\begin{align*}
 \cos \left( (1-w) \theta (Tx) + \theta_0 \right) & = \cos \theta_0 \mp (1-w) \frac{s a^\pm}{\sigma_2^2} x_1^{\beta^\pm-1} \sin \theta_0 + O (x_1^{2\beta^\pm-2} ) + O( x_1^{-1} ) ; \\
 \sin \left( (1-w) \theta (Tx) + \theta_0 \right) & =  \sin \theta_0 \pm (1-w) \frac{s a^\pm}{\sigma_2^2} x_1^{\beta^\pm-1} \cos \theta_0 + O (x_1^{2\beta^\pm-2} ) + O( x_1^{-1} ) .\end{align*}
Then~\eqref{eq:h-w-D-vector} with~\eqref{eq:tilde-drift-boundary-small-beta1} and~\eqref{eq:tilde-drift-boundary-small-beta2} shows that 
\begin{align}
\label{eq:h-w-boundary-small-beta} 
 \Exp_x \langle D h_w(Tx), \tilde \Delta \rangle = w \| Tx \|^{w-1} 
  \frac{\mu^\pm (x) \cos \theta_0 \cos \alpha}{s \sigma_2}
\left( \pm A_1 + ( a^\pm A_2 + o(1)) x_1^{\beta^\pm -1}  \right)  
,
 \end{align}
where, for $| \theta_0 | < \pi/2$,
$A_1 =   \sigma^2_2 \tan \alpha + \rho  - s \tan \theta_0$,
and
\begin{align*}
A_2 & =
\sigma_2^2 \beta^\pm  
-  \rho \beta^\pm \tan \alpha 
- (1-w)   s \tan \theta_0 \tan \alpha  
- (1-w) \frac{s \rho}{\sigma_2^2} \tan \theta_0  
 \\
& {} \qquad {} +  s \beta^\pm \tan \theta_0 \tan \alpha
- (1-w) \frac{s^2}{\sigma_2^2}  .
\end{align*}
Now take $\theta_0 = \theta_1$ as given by~\eqref{eq:theta-1}, so that
$s \tan \theta_0 =  \sigma_2^2 \tan \alpha + \rho$.
Then $A_1 = 0$, eliminating the leading order term in~\eqref{eq:h-w-boundary-small-beta}.
Moreover, with this choice of $\theta_0$ we get, after some further cancellation and simplification,
that 
\[ A_2 = \frac{\sigma_2^2 \left( \beta^\pm - (1-w) \bc \right)}{\cos^2 \alpha} ,\]
with $\bc$ as given by~\eqref{eq:betac}. 
Thus with~\eqref{eq:h-w-boundary-small-beta} and~\eqref{eq:f-boundary} we verify~\eqref{eq:f-increment-boundary-small-beta}.

Finally suppose that $\beta^\pm > 1$, and restrict to the case $w \in (0,1/2)$.
Let $\theta_2  \in (-\pi/2,\pi/2)$ be as given by~\eqref{eq:theta-2}.
Then if $x = (0,x_2)$, we have $\theta (Tx) = \theta_2 - \frac{\pi}{2}$
if $x_2 <0$ and  $\theta (Tx) = \theta_2 + \frac{\pi}{2}$ if $x_2 >0$ (see Figure~\ref{fig:transform}).
It follows from~\eqref{eq:T-transform} that
\[ \theta (Tx ) = \theta_2 \pm \frac{\pi}{2} + O ( x_1^{1-\beta^\pm} ), \text{ for } x \in S_B^\pm, \]
as $\| x \| \to \infty$ (and $x_1 \to \infty$).
Now~\eqref{eq:h-w-D-vector} with~\eqref{eq:tilde-drift-boundary-big-beta1} and~\eqref{eq:tilde-drift-boundary-big-beta2} shows that 
\begin{align}
\label{eq:h-boundary-big-beta}
 \Exp_x \langle D h_w (Tx), \tilde \Delta \rangle 
  =  w \| Tx \|^{w-1}   \frac{\mu^\pm (x)}{s\sigma_2}   \Bigl( 
\sigma_2^2 \cos \alpha \cos \left( (1-w) \theta  (Tx) + \theta_0 \right)  \nonumber\\
  - \rho \sin \alpha \cos \left( (1-w) \theta  (Tx) + \theta_0 \right) 
+ s \sin \alpha \sin \left( (1-w) \theta  (Tx) + \theta_0 \right) 
+ O ( x_1^{1-\beta^\pm} ) \Bigr) .
 \end{align}
Set $\phi := (1-w) \frac{\pi}{2}$.
Choose $\theta_0 = \theta_3 - (1-w)\theta_2$,
where $\theta_3 \in (-\pi,\pi)$ satisfies~\eqref{eq:sincos3}.
Then we have that, for $x \in S_B^\pm$,
\begin{align}
\label{eq:cos3}
  \cos \left( (1-w) \theta  (Tx) + \theta_0 \right) 
	& = \cos \left( \theta_3 \pm \phi \right)  + O ( x_1^{1-\beta^\pm} ) \nonumber\\
	& = \cos \phi \cos \theta_3 \mp \sin \phi \sin \theta_3  + O ( x_1^{1-\beta^\pm} ).\end{align}
	Similarly, for $x \in S_B^\pm$,
	\begin{align}
	\label{eq:sin3}
	\sin  \left( (1-w) \theta  (Tx) + \theta_0 \right) =\cos \phi \sin \theta_3  \pm \sin \phi \cos \theta_3 + O ( x_1^{1-\beta^\pm} ). \end{align}
Using~\eqref{eq:cos3} and~\eqref{eq:sin3} in~\eqref{eq:h-boundary-big-beta}, we obtain
\begin{align*}
 \Exp_x \langle D h_w (Tx), \tilde \Delta \rangle 
  =  w \| Tx \|^{w-1}   \frac{\mu^\pm (x)}{s\sigma_2} \left( A_3 \cos \phi \mp A_4 \sin \phi + o(1) \right) ,\end{align*}
where
\begin{align*}
A_3  & = \left( \sigma_2^2 \cos \alpha - \rho \sin \alpha \right) \cos \theta_3 + s \sin \alpha \sin \theta_3 \\
& = \sigma_2 d \cos^2 \theta_3 + \sigma_2 d \sin^2 \theta_3 = \sigma_2 d ,\end{align*}
by~\eqref{eq:sincos3}, and, similarly,
\[ A_4 =  \left( \sigma_2^2 \cos \alpha - \rho \sin \alpha \right) \sin \theta_3 - s \sin \alpha \cos \theta_3 = 0.\]
Then with~\eqref{eq:f-boundary} we obtain~\eqref{eq:f-increment-boundary-big-beta}.
\end{proof}

In the case where $\beta^+, \beta^- <1$ with $\beta^+ \neq \beta^-$, we will in some circumstances
need to modify the function $f_w^\gamma$ so that
it can be made insensitive to the behaviour near the boundary with the smaller of $\beta^+, \beta^-$. To this end,
define for $w, \gamma, \nu, \lambda \in \R$,
\begin{equation}
\label{eq:F-def}
 F_w^{\gamma,\nu} (x) :=  f_w^\gamma (x) + \lambda x_2 \| T x\|^{2\nu}  .\end{equation}
We state a result for the case $\beta^- < \beta^+$; an analogous
result holds if $\beta^+ < \beta^-$.

\begin{lemma}
\label{lem:F-increments}
Suppose that~\eqref{ass:bounded-moments}, \eqref{ass:zero-drift}, \eqref{ass:reflection}, and~\eqref{ass:covariance}
hold, with $p>2$, 
 $\alpha^+  = -\alpha^- = \alpha$ for $| \alpha | < \pi/2$,
and $0 \leq \beta^- < \beta^+ < 1$.  
Let $w, \gamma \in \R$ be such that
$2-p < \gamma w < p$. Take
$\theta_0 = \theta_1 \in (-\pi/2,\pi/2)$
given by~\eqref{eq:theta-1}. Suppose that
\[ \gamma w + \beta^- -2 < 2 \nu <  \gamma w + \beta^+ -2   . \]
Then as $\| x \| \to \infty$ with $x \in S_I$,
\begin{align}
\label{eq:F-increment-interior}
 \Exp [   F^{\gamma,\nu}_w(\xi_{n+1})  - F_w^{\gamma,\nu} ( \xi_n) \mid \xi_n = x  ] 
 = \frac{1}{2}\gamma(\gamma-1) (w^2 + o(1) ) ( h_w(Tx) )^{\gamma-2} \| Tx \|^{2w-2}.
\end{align}
As $\| x \| \to \infty$ with $x \in S^+_B$,
\begin{align}
\label{eq:F-increment-boundary-beta-plus}
& {} \Exp [  F^{\gamma,\nu}_w(\xi_{n+1})  - F^{\gamma,\nu}_w( \xi_n) \mid \xi_n = x ] \nonumber\\
& {} \quad {} = \gamma  w \| Tx \|^{w-1} \left( h_w (Tx ) \right)^{\gamma-1}
  \frac{a^+ \mu^+ (x) \sigma_2 \cos \theta_1 }{s \cos \alpha} \left(\beta^+ - (1-w) \bc  +o(1) \right)
x_1^{\beta^+-1}
 .\end{align}
As $\| x \| \to \infty$ with $x \in S^-_B$,
\begin{align}
\label{eq:F-increment-boundary-beta-minus}
  \Exp [  F^{\gamma,\nu}_w(\xi_{n+1})  - F^{\gamma,\nu}_w( \xi_n) \mid \xi_n = x ]  
  = 
 \lambda \| T x \|^{2\nu} \left( \mu^- (x) \cos \alpha + o(1) \right)
 .\end{align}
\end{lemma}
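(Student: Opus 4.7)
The plan is to decompose $F_w^{\gamma,\nu}(x) = f_w^\gamma(x) + g(x)$ where $g(x) := \lambda x_2 \|Tx\|^{2\nu}$, and handle the two summands separately. Since $\theta_0 = \theta_1$ and $0 \leq \beta^\pm < 1$, Lemma~\ref{lem:f-increments}(i) applies directly to the $f_w^\gamma$ part, giving the expression on the right of~\eqref{eq:f-increment-interior} for $x \in S_I$ and the corresponding boundary expressions (with $\pm$) from~\eqref{eq:f-increment-boundary-small-beta} for $x \in S_B^\pm$. The remaining work is to estimate $\Exp_x [ g(\xi_1) - g(x) ]$ in each of the three regimes and show that these contributions combine with the $f_w^\gamma$ part to give the claimed formulas.

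For $g$ I would follow the template used in the proof of Lemma~\ref{lem:f-increments}: on the small-jump event $E_x := \{ \| \Delta \| < \|x\|^\delta \}$ with $\delta \in (0,1)$ chosen close to $1$ (depending on $p,\gamma w,\nu$), Taylor expand $g$ to second order with cubic remainder on $B_{r/2}(x)$, and use~\eqref{eq:big-jumps} with~\eqref{ass:bounded-moments} to bound the contribution on $E_x^c$ (noting $2\nu + 1 < p$ since $2\nu < \gamma w + \beta^+ - 2 < p-1$, which suffices to handle the crude linear growth of $g$). A direct calculation of derivatives of $g$ shows that for $\|x\| \to \infty$ with $x \in S$ (using $|x_2| = O(\|x\|^{\beta}) = o(\|x\|)$), one has
\[
D_2 g(x) = \lambda \| Tx \|^{2\nu} ( 1 + o(1) ), \quad D_1 g(x) = o ( \| x \|^{2\nu} ), \quad | D_i D_j g(x) | = O ( \| x \|^{2\nu - 1}) ,
\]
and $| D_i D_j D_k g(x) | = O ( \| x \|^{2\nu - 2})$. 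In particular, the gradient of $g$ is essentially aligned with $(0,\lambda\|Tx\|^{2\nu})$.

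With these estimates in hand the three cases are quick. In $S_I$, the zero-drift assumption~\eqref{ass:zero-drift} forces $\langle D g(x), \Exp_x \Delta \rangle = o( \|x\|^{2\nu - 1})$, while the second-order and remainder contributions are $O(\|x\|^{2\nu - 1})$; since $2\nu < \gamma w + \beta^+ - 2 < \gamma w - 1$, the whole $g$ contribution is $o(\|x\|^{\gamma w - 2})$ and is absorbed into the error term of~\eqref{eq:f-increment-interior}, yielding~\eqref{eq:F-increment-interior}. In $S_B^+$, the drift $\Exp_x \Delta$ is $O(1)$ by Lemma~\ref{lem:increments}, so $\langle D g(x), \Exp_x \Delta \rangle = O( \|x\|^{2\nu})$, which by the upper inequality $2\nu < \gamma w + \beta^+ - 2$ is $o$ of the $f_w^\gamma$ boundary term of order $\|x\|^{\gamma w + \beta^+ - 2}$; this gives~\eqref{eq:F-increment-boundary-beta-plus}. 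In $S_B^-$, using $\Exp_x \Delta_2 = \mu^-(x) \cos \alpha + O( x_1^{\beta^- - 1}) + O(\|x\|^{-1})$ from~\eqref{eq:drift-boundary-small-beta2} together with $D_2 g (x) = \lambda \|Tx\|^{2\nu}(1+o(1))$ and $D_1 g = o(\|x\|^{2\nu})$, the leading term of $\Exp_x [ g(\xi_1) - g(x) ]$ is $\lambda \| Tx \|^{2\nu} ( \mu^- (x) \cos \alpha + o(1) )$, while the $f_w^\gamma$ contribution here is $O(\|x\|^{\gamma w + \beta^- - 2}) = o(\|Tx\|^{2\nu})$ by the lower inequality $2\nu > \gamma w + \beta^- - 2$; this yields~\eqref{eq:F-increment-boundary-beta-minus}. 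The main subtlety (and the reason for introducing $g$) is precisely the two-sided pinching $\gamma w + \beta^- - 2 < 2\nu < \gamma w + \beta^+ - 2$: it is exactly what is needed to simultaneously (a) render the $g$ contribution negligible in $S_I$ and on $S_B^+$, and (b) let the $g$ contribution dominate the unhelpful $f_w^\gamma$ boundary term on $S_B^-$.
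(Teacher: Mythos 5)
Your proposal is correct and takes essentially the same approach as the paper: decompose $F_w^{\gamma,\nu} = f_w^\gamma + \lambda v_\nu$ with $v_\nu(x) = x_2\|Tx\|^{2\nu}$, invoke Lemma~\ref{lem:f-increments}(i) for the first piece, Taylor-expand the perturbation with big-jump control via~\eqref{eq:big-jumps}, and exploit the two-sided pinching of $\nu$ to make the perturbation negligible in $S_I$ and $S_B^+$ but dominant on $S_B^-$. The one cosmetic difference is that the paper Taylor-expands the scalar factor $\|Tx\|^{2\nu}$ in one variable and then multiplies by $x_2$ and $y_2$, which (using $|x_2|,|y_2|=O(\|x\|^{\beta^+})$ when both endpoints lie in $S$) yields the tighter remainder $O(\|y\|^2\|x\|^{2\nu+\beta^+-2})$, whereas your direct multivariable Taylor expansion with third derivatives $O(\|x\|^{2\nu-2})$ gives a cruder but still sufficient bound.
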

\begin{proof}
Suppose that $0 \leq \beta^- < \beta^+ < 1$. As in the proof of Lemma~\ref{lem:f-increments}, let $E_x = \{ \| \Delta \| < \| x \|^\delta\}$,
where $\delta \in (0,1)$ satisfies~\eqref{eq:delta-choice}.
Set $v_\nu (x) := x_2 \| T x \|^{2\nu}$.
Then, using Taylor's formula in one variable, for $x, y \in \R^2$ with $y \in B_{r/2} (x)$,
\begin{align*}
 \| x + y \|^{2\nu} & = \| x \|^{2\nu} \left( 1 + \frac{2 \langle x, y \rangle + \| y\|^2}{\| x \|^2} \right)^\nu  = \| x \|^{2\nu} + 2 \nu \langle x,y\rangle \| x\|^{2\nu -2} + R (x,y) ,\end{align*}
where  $| R(x,y) | \leq C \| y \|^2 \| x \|^{2\nu -2}$.
Thus, for $x \in S$ with $y \in B_{r/2}(x)$ and $x + y \in S$, 
\begin{align}
\label{eq:v-taylor}
 v_\nu (x+y) - v_\nu (x) & = (x_2 + y_2) \| T x+ T y \|^{2\nu} - x_2 \| T x \|^{2\nu} \nonumber\\
& = y_2 \| T x \|^{2\nu} + 2 \nu x_2 \langle T x, T y\rangle \| T x\|^{2\nu -2} +  2 \nu y_2 \langle T x, T y\rangle \| T x\|^{2\nu -2} \nonumber\\
& {} \qquad {} + R (x,y ),\end{align}
where now $| R(x,y) | \leq C \| y \|^2  \|  x \|^{2 \nu + \beta^+ -2}$,  
using the fact that both $|x_2|$ and $|y_2|$ are $O ( \| x \|^{\beta^+} )$.
 Taking $x = \xi_0$ and $y = \Delta$ so $Ty = \tilde \Delta$, we obtain
\begin{align}
\label{eq:v-increment}
\Exp_x \bigl[ ( v_\nu ( \xi_1 ) - v_\nu (\xi_0 ) ) \2 { E_x } \bigr]
& = \| T x \|^{2\nu} \Exp_x \bigl[ \Delta_2  \2 { E_x } \bigr]
+ 2 \nu x_2 \| T x\|^{2\nu -2} \Exp_x \bigl[ \langle T x, \tilde \Delta\rangle  \2 { E_x } \bigr] \nonumber\\
& {} \qquad {} + 
2 \nu  \| T x\|^{2\nu -2}  \Exp \bigl[ \Delta_2 \langle T x, \tilde \Delta \rangle \2 { E_x } \bigr] 
 + \Exp \bigl[ R(x, \Delta ) \2 { E_x } \bigr].\end{align}
Suppose that $x \in S_I$. Similarly to~\eqref{eq:drift-small-jump}, we have~$\Exp_x [ \langle T x, \tilde \Delta\rangle  \2 { E_x } ] = o(1)$,
and, by similar arguments using~\eqref{eq:big-jumps},  $\Exp [  \Delta_2  \2 { E_x } ] = o ( \| x \|^{-1})$,
$\Exp_x | \Delta_2 \langle T x, \tilde \Delta \rangle \2 { E^\rc_x } | = o ( \| x \| )$,
and $\Exp_x | R(x, \Delta ) \2 { E_x } | = o ( \|x \|^{2\nu -1} )$, since $\beta^+ <1$.
Also, by~\eqref{eq:T-transform},
\begin{align*}
\Exp_x (  \Delta_2 \langle T x, \tilde \Delta \rangle )
 & = \sigma_2 \Exp_x (  \tilde \Delta_2 \langle T x, \tilde \Delta \rangle ) \\
& = \sigma_2 (T x)_1 \Exp_x ( \tilde \Delta_1 \tilde \Delta_2 ) + \sigma_2 (Tx)_2 \Exp_x ( \tilde \Delta_2^2  ) . \end{align*}
Here, by~\eqref{eq:tilde-interior},    $\Exp_x ( \tilde \Delta_1 \tilde \Delta_2 ) = o (1)$
and $\Exp_x ( \tilde \Delta_2^2  ) = O(1)$, while $ \sigma_2 (Tx)_2 = x_2 = O ( \| x \|^{\beta^+})$.
Thus $\Exp_x (  \Delta_2 \langle T x, \tilde \Delta \rangle ) = o ( \| x \| )$. Hence also
\[ \Exp_x \bigl[ \Delta_2 \langle T x, \tilde \Delta \rangle \2 { E_x } \bigr] = o ( \| x\|) .\]
Thus from~\eqref{eq:v-increment} we get that, for $x \in S_I$,
\begin{equation}
\label{eq:v-increment-interior-small-jump}
 \Exp_x \bigl[ ( v_\nu ( \xi_1 ) - v_\nu (\xi_0 ) ) \2 { E_x } \bigr] =  o ( \| x \|^{2\nu -1} ) .\end{equation}
On the other hand, we use the fact that $| v_\nu (x+y ) - v_\nu (x) | \leq C ( \| x \| + \| y \| )^{2\nu + \beta^+}$ to get
\[  \Exp_x \bigl[ | v_\nu ( \xi_1 ) - v_\nu (\xi_0 ) | \2 { E^\rc_x } \bigr]
\leq C \Exp_x \bigl[ \| \Delta \|^{(2 \nu+\beta^+) /\delta} \2 { E^\rc_x } \bigr] .\]
Here $2\nu +\beta^+ < 2 \nu + 1 < \gamma w < \delta p$, by choice of $\nu$ and~\eqref{eq:delta-choice}, so we may apply~\eqref{eq:big-jumps}
with $q = (2 \nu+\beta^+) /\delta$ to get
\begin{equation}
\label{eq:v-increment-interior-big-jump}  
\Exp_x \bigl[ | v_\nu ( \xi_1 ) - v_\nu (\xi_0 ) | \2 { E^\rc_x } \bigr] = O ( \| x \|^{2\nu+\beta^+-\delta p} ) = o ( \| x \|^{2\nu-1} ) ,\end{equation}
since $\delta p > 2$, by~\eqref{eq:delta-choice}. Combining~\eqref{eq:v-increment-interior-small-jump}, \eqref{eq:v-increment-interior-big-jump} and~\eqref{eq:f-increment-interior},
we obtain~\eqref{eq:F-increment-interior}, provided that $2\nu -1 < \gamma w -2$, 
which is the case since  $2\nu < \gamma w + \beta^+ - 2$ and $\beta^+ < 1$.

Now suppose that $x \in S_B^\pm$. We 
truncate~\eqref{eq:v-taylor} to see that, for $x \in S$ with $y \in B_{r/2}(x)$ and $x + y \in S$, 
\[ 
 v_\nu (x+y) - v_\nu (x)   = y_2 \| T x \|^{2\nu}  + R (x,y ), \]
where now $| R(x,y) | \leq C \| y \| \| x \|^{2\nu + \beta^\pm -1}$, 
using the fact that for $x \in S_B^\pm$, $|x_2| = O ( \| x \|^{\beta^\pm} )$.
It follows that, for $x \in S_B^\pm$,
\[
\Exp_x \bigl[ ( v_\nu ( \xi_1 ) - v_\nu (\xi_0 ) ) \2 {E_x } \bigr]
=  \| T x \|^{2\nu} \Exp_x \bigl[ \Delta_2 \2 {E_x } \bigr]
+ O ( \| x \|^{2\nu + \beta^\pm -1} ).
\]
By~\eqref{eq:big-jumps} and~\eqref{eq:delta-choice}
we have that $\Exp [ |\Delta_2| \2 {E^\rc_x } ] = O( \| x \|^{-\delta(p-1)} ) = o( \| x\|^{-1} )$, 
while
if $x \in S_B^\pm$, then, by~\eqref{eq:drift-boundary-small-beta2},
 $\Exp_x \Delta_2 = \mp \mu^\pm (x) \cos \alpha + O( \|x\|^{\beta^\pm-1})$. 
On the other hand, the estimate~\eqref{eq:v-increment-interior-big-jump}  still applies,
so we get, for $x \in S_B^\pm$,
\begin{equation}
\label{eq:v-boundary}
\Exp_x [ v_\nu ( \xi_1 ) - v_\nu (\xi_0 ) ]
  = \mp \| T x \|^{2\nu} \mu^\pm (x) \cos \alpha + O ( \| x \|^{2\nu +\beta^\pm-1 } ) .\end{equation}
If we choose $\nu$ such that $2 \nu < \gamma w + \beta^+ - 2$,
then we  combine~\eqref{eq:v-boundary} and~\eqref{eq:f-increment-boundary-small-beta}
to get~\eqref{eq:F-increment-boundary-beta-plus}, since the 
term from~\eqref{eq:f-increment-boundary-small-beta} dominates.
 If we choose $\nu$ such that $2 \nu > \gamma w + \beta^- - 2$,
then the term from~\eqref{eq:v-boundary} dominates that from~\eqref{eq:f-increment-boundary-small-beta},
and we get~\eqref{eq:F-increment-boundary-beta-minus}.
\end{proof}

In the critically recurrent cases, where $\max ( \beta^+,\beta^- ) = \bc \in (0,1)$
or $\beta^+,\beta^- >1$, in which no passage-time moments exist, the functions of polynomial
growth based on $h_w$ as defined at~\eqref{eq:h-w-def} are not sufficient to prove recurrence.
Instead we need functions which grow more slowly. For $\eta \in \R$ let
\begin{equation}
\label{eq:h-def}
h (x) := h(r,\theta ) := \log r + \eta \theta , \text{ and } \ell (x) := \log h( Tx) ,\end{equation}
where we understand $\log y$ to mean $\max (1,\log y)$. 
The function $h$ is again harmonic (see below) and was used in the context of reflecting Brownian motion in a wedge in~\cite{vw}.
Set
\begin{equation}
\label{eq:eta-0}
\eta_0 := \eta_0 ( \Sigma,\alpha ) := \frac{\sigma_2^2 \tan \alpha + \rho}{s},
\text{ and }
\eta_1 := \eta_1 (\Sigma, \alpha):= \frac{\sigma_1^2 \tan \alpha - \rho}{s} .\end{equation}

\begin{lemma}
\label{lem:ell-increments}
Suppose that~\eqref{ass:bounded-moments}, \eqref{ass:zero-drift-plus}, \eqref{ass:reflection}, and~\eqref{ass:covariance-plus}
hold, with $p>2$, $\eps >0$,
 $\alpha^+  = -\alpha^- = \alpha$ for $| \alpha | < \pi/2$,
and $\beta^+, \beta^- \geq 0$.  
For any $\eta \in \R$, as $\| x \| \to \infty$ with $x \in S_I$,
\begin{align}
\label{eq:ell-increment-interior}
\Exp [ \ell ( \xi_{n+1}) - \ell ( \xi_n) \mid \xi_n = x ]  =  - \frac{1+\eta^2 +o(1)}{2 \| Tx\|^2(\log \| Tx\|)^2}   .\end{align}
If $0 \leq \beta^\pm < 1$, take $\eta = \eta_0$ as defined at~\eqref{eq:eta-0}. Then,
as $\| x \| \to \infty$ with $x \in S^\pm_B$,
\begin{align}
\label{eq:ell-increment-boundary-small-beta}
& {} \Exp [ \ell (\xi_{n+1}) - \ell (\xi_n) \mid \xi_n = x ] \nonumber\\
& {} \qquad {}
=  \frac{\sigma_2^2 a^\pm \mu^\pm (x)}{s^2 \cos \alpha} \frac{1}{\|T x\|^2 \log\|T x\|} \left( ( \beta^\pm - \bc ) x^{\beta^\pm}_1 + O ( \| x \|^{2\beta^\pm-1} ) + O(1) \right)  .\end{align}
If $\beta^\pm > 1$, take $\eta = \eta_1$ as defined at~\eqref{eq:eta-0}. Then 
as $\| x \| \to \infty$ with $x \in S^\pm_B$,
\begin{align}
\label{eq:ell-increment-boundary-big-beta}
& {} \Exp [ \ell (\xi_{n+1}) - \ell (\xi_n) \mid \xi_n = x ] \nonumber\\
& {} \qquad {} =   
\frac{\mu^\pm (x)}{ s^2 \cos \alpha} 
\frac{x_1}{\|T x\|^2 \log\|T x\|}
 \left(  \sigma_1^2  \sin^2 \alpha + \sigma_2^2  \cos^2 \alpha    - \frac{\sigma_1^2}{\beta^\pm}
-  \rho \sin 2 \alpha + o(1) \right)   .\end{align}
\end{lemma}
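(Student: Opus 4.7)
The plan is to follow the same overall strategy as the proof of Lemma~\ref{lem:f-increments}, but for the slowly-growing Lyapunov function $\ell = \log h(T \,\cdot\,)$ rather than the polynomial $f_w^\gamma$. Most steps are parallel, so I will emphasize what changes.

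First I would compute the derivatives of $h$ and $\ell$. Using~\eqref{eq:polar-diff}, $Dh(x) = r^{-1}(\cos\theta - \eta\sin\theta,\; \sin\theta + \eta\cos\theta)$, and one checks that $D_1^2 h + D_2^2 h = 0$ (so $h$ is harmonic), that $\|Dh(x)\|^2 = (1+\eta^2)/r^2$, and that third derivatives are $O(r^{-3})$. Applying the chain rule, $D_i\ell = D_i h / h$ and $D_iD_j \ell = D_iD_j h / h - (D_ih)(D_jh)/h^2$, so the Laplacian of $\ell$ at $Tx$ equals $-(1+\eta^2)/(\|Tx\|^2 h(Tx)^2)$, while $D\ell(Tx) = O((\|Tx\| \log \|Tx\|)^{-1})$ and the third derivatives of $\ell$ are $O((\|Tx\|^3 \log\|Tx\|)^{-1})$. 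These are the analogues of the identities used at~\eqref{eq:h-w-derivatives} onwards.

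Second, I would introduce the truncation $E_x = \{\|\Delta\| < \|x\|^\delta\}$ for an appropriate $\delta \in (2/p, 1)$, apply the second-order Taylor formula~\eqref{eq:taylor-second-order} to $\ell(Tx + \tilde\Delta)$ on $E_x$, and control the complementary event using~\eqref{eq:big-jumps}. Since $\ell$ has at most logarithmic growth, the contribution from $E_x^{\mathrm c}$ is $O(\Pr_x(E_x^{\mathrm c}) \log\|x\|) = O(\|x\|^{-\delta p} \log\|x\|)$, which is negligible compared to the target order $(\|x\|\log\|x\|)^{-2}$.  For the interior estimate~\eqref{eq:ell-increment-interior}, the first-order Taylor term is bounded via~\eqref{ass:zero-drift-plus} by $\|D\ell(Tx)\| \cdot O(\|x\|^{-1-\eps}) = o(\|x\|^{-2}(\log\|x\|)^{-2})$, while the second-order term, by~\eqref{ass:covariance-plus}, equals $\tfrac12(D_1^2\ell + D_2^2\ell)(Tx)$ up to an $O(\|x\|^{-\eps})$ relative error, yielding the claimed leading constant $-(1+\eta^2)/2$ after substituting the Laplacian identity. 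This is precisely where the \emph{sharpened} assumptions~\eqref{ass:zero-drift-plus} and~\eqref{ass:covariance-plus} are essential: with only~\eqref{ass:zero-drift} and~\eqref{ass:covariance}, the drift and covariance errors would be of the same order as the main term.

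For the two boundary estimates, I would truncate Taylor to first order, so that $\Exp_x[\ell(\xi_1)-\ell(\xi_0)] = h(Tx)^{-1}\Exp_x\langle Dh(Tx), \tilde\Delta\rangle + O((\|x\|^2 \log\|x\|)^{-1})$, and then insert the asymptotics of $\Exp_x\tilde\Delta$ from Lemma~\ref{lem:transform}. For $\beta^\pm<1$, the constant-in-$x_1$ part of the inner product evaluates to $\pm (\mu^\pm \cos\alpha/(r\sigma_2))[(\sigma_2^2\tan\alpha+\rho)/s - \eta]$, which the choice $\eta=\eta_0$ exactly cancels (mirroring the role of $\theta_0=\theta_1$ in Lemma~\ref{lem:f-increments}); the surviving $x_1^{\beta^\pm-1}$ contribution, after simplification using $\cos^2\alpha$ and the definition~\eqref{eq:betac} of $\bc$, produces the factor $\beta^\pm - \bc$ in~\eqref{eq:ell-increment-boundary-small-beta}. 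For $\beta^\pm>1$, the situation mirrors the case $\theta_0=\theta_3-(1-w)\theta_2$ of Lemma~\ref{lem:f-increments}: using $\theta(Tx)\to\theta_2\pm\pi/2$ and the drift asymptotics~\eqref{eq:tilde-drift-boundary-big-beta1}--\eqref{eq:tilde-drift-boundary-big-beta2}, the choice $\eta=\eta_1$ kills one trigonometric combination and leaves the combination in~\eqref{eq:ell-increment-boundary-big-beta}; the $-\sigma_1^2/\beta^\pm$ piece comes from the subleading $x_1^{1-\beta^\pm}$ correction in the drift multiplied by the $x_1/\|Tx\|^2$ factor (since $x_1 \sim \|x\|^{1/\beta^\pm}$ in this regime).

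The main obstacle I anticipate is the $\beta^\pm > 1$ boundary computation: the geometric setup has $\theta(Tx)\approx\theta_2\pm\pi/2$, so the leading trigonometric identities must be unpacked carefully and the right combinations of $\sigma_1^2,\sigma_2^2,\rho,\alpha$ isolated; checking that the choice $\eta = \eta_1 = (\sigma_1^2\tan\alpha - \rho)/s$ kills the correct constant-order coefficient (and only that coefficient) is the bookkeeping crux. The interior step is conceptually the cleanest but is where the strengthened moment control enters crucially; any attempt to run the argument under~\eqref{ass:zero-drift} alone would fail at that point.
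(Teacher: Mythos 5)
Your proposal matches the paper's proof in all essential respects: the derivative computations for $h$ and $\log h$, the verification that $h$ is harmonic so the second-order Taylor term collapses to the Laplacian, the truncation event $E_x$ with $\delta\in(2/p,1)$ and control of $E_x^{\mathrm c}$ via~\eqref{eq:big-jumps}, the observation that the sharpened hypotheses~\eqref{ass:zero-drift-plus} and~\eqref{ass:covariance-plus} are needed so that the interior drift and covariance errors are dominated by the $(\|x\|\log\|x\|)^{-2}$ leading term, and the identification that $\eta=\eta_0$ (resp.\ $\eta=\eta_1$) is exactly the choice that annihilates the leading-order boundary contribution for $\beta^\pm<1$ (resp.\ $\beta^\pm>1$). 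One small imprecision: your account of where the $-\sigma_1^2/\beta^\pm$ term in~\eqref{eq:ell-increment-boundary-big-beta} comes from is slightly garbled. For $\beta^\pm>1$ we have $|x_2|\sim a^\pm x_1^{\beta^\pm}\gg x_1$, so $\|Tx\|^2\asymp x_1^{2\beta^\pm}$; the surviving $x_1$-order coefficient receives contributions both from the $x_1$-component of $Dh(Tx)$ paired with the constant part of the drift and from the $x_2$-component ($\sim x_1^{\beta^\pm}$) paired with the subleading $x_1^{1-\beta^\pm}$ drift correction of Lemma~\ref{lem:transform}, and it is this second pairing that introduces the $1/\beta^\pm$ prefactor; the paper works directly with the Cartesian form $Dh(Tx)=\|Tx\|^{-2}\bigl(\tfrac{\sigma_2}{s}x_1-\tfrac{\rho}{s\sigma_2}x_2-\tfrac{\eta}{\sigma_2}x_2,\; \tfrac{1}{\sigma_2}x_2+\tfrac{\eta\sigma_2}{s}x_1-\tfrac{\eta\rho}{s\sigma_2}x_2\bigr)^\tra$ rather than invoking the $\theta(Tx)\to\theta_2\pm\pi/2$ geometry. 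This is a bookkeeping detail, not a gap.
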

\begin{proof}
Given $\eta \in \R$, for $r_0 = r_0 (\eta ) = \exp ( \re + | \eta | \pi )$,
we have from~\eqref{eq:eta-0} that both $h$ and $\log h$
are infinitely differentiable in the domain $\cR_{r_0} := \{ x \in \R^2 : x_1 > 0, \, r (x) > r_0 \}$.
Differentiating~\eqref{eq:eta-0} and using~\eqref{eq:polar-diff} we obtain, for $x \in \cR_{r_0}$,
\begin{equation}
\label{eq:h-derivatives}
 D_1 h(x) = \frac{1}{r} \left( \cos \theta - \eta \sin \theta \right) , \text{ and } D_2 h(x) = \frac{1}{r}  \left( \sin \theta + \eta \cos \theta \right) .\end{equation}
We verify that $h$ is harmonic in $\cR_{r_0}$, since
\[ D_1^2 h(x) =  \frac{\eta \sin 2 \theta}{r^2}  - \frac{\cos 2 \theta}{r^2} = - D_2^2 h(x) .\]
Also, for any $i,j,k$, $| D_i D_j D_k h(x) | = O (r^{-3} )$. Moreover, $D_i \log h(x) = (h(x))^{-1} D_i h(x)$,  
\[ D_i D_j \log h(x) = \frac{D_i D_j h(x)}{h(x)} - \frac{ (D_i h(x)) (D_j h(x))}{(h(x))^2} ,\]
and $| D_i D_j D_k \log h(x) | = O (r^{-3} (\log r )^{-1})$.
Recall that $Dh(x)$ is the vector
function whose components are $D_i h(x)$.
Then Taylor's formula~\eqref{eq:taylor-second-order} together with the harmonic property of $h$ shows that
for $x \in \cR_{2r_0}$ and $y \in B_{r/2} (x)$,
\begin{align}
\label{eq:h-taylor}
 \log h (x +y ) 
& = \log h(x) + \frac{\langle D h(x), y \rangle}{h(x)}
+ \frac{(y_1^2 - y_2^2) D_1^2 h(x)}{2h(x)} + \frac{ y_1 y_2 D_{1} D_{2} h(x)}{h(x)} \nonumber\\
& {} \qquad {} - \frac{\langle D h(x), y \rangle^2}{2(h(x))^2}
+ R (x,y)
,\end{align}
where $| R(x,y) | \leq C \| y \|^3 \| x \|^{-3} (\log \| x \|)^{-1}$ for some constant $C< \infty$,
all $y \in B_{r/2} (x)$, and all $\| x \|$ sufficiently large.
As in the proof of Lemma~\ref{lem:f-increments}, let $E_x = \{ \| \Delta \| < \| x \|^\delta\}$
for $\delta \in (\frac{2}{p},1)$.
Then applying the expansion in~\eqref{eq:h-taylor} to $\log h(Tx + \tilde \Delta)$, conditioning on $\xi_0 = x$, and taking expectations,
we obtain, for $\| x \|$ sufficiently large,
\begin{align}
\label{eq:ell-increment-small-jump}
& {} \Exp_x \bigl[ ( \ell (\xi_1) - \ell (\xi_0) ) \2 { E_x} \bigr]  
 = \frac{\Exp_x \bigl[ \langle D h(Tx), \tilde \Delta \rangle \2 { E_x} \bigr]  }{h(Tx)}
+ \frac{D_1^2 h(Tx) \Exp_x \bigl[ (\tilde \Delta_1^2 - \tilde \Delta_2^2)\2 { E_x} \bigr] }{2h(Tx)} \nonumber\\
& {} \qquad {} + \frac{  D_{1} D_{2} h(Tx) \Exp_x \bigl[ \tilde \Delta_1 \tilde \Delta_2 \2 { E_x} \bigr]}{h(Tx)}  - \frac{\Exp_x \bigl[ \langle D h(Tx), \tilde \Delta \rangle^2 \2 { E_x} \bigr]}{2(h(Tx))^2}
+ \Exp_x \bigl[ R (Tx,\tilde \Delta)\2 { E_x} \bigr]
.\end{align}
Let $p' \in (2,3]$ be such that~\eqref{eq:p-moments} holds. Then
\[ \Exp_x \bigl| R (Tx,\tilde \Delta)\2 { E_x} \bigr| \leq C \| x \|^{-3+(3-p')\delta} \Exp_x ( \| \Delta \|^{p'} ) = O ( \| x \|^{-2-\eps'} ) ,\]
for some $\eps' >0$. 

Suppose that $x \in S_I$. By~\eqref{eq:tilde-interior-plus}, 
$\Exp_x ( \tilde \Delta_1 \tilde \Delta_2 ) = O ( \| x \|^{-\eps} )$ and, by~\eqref{eq:big-jumps},
$\Exp_x | \tilde \Delta_1 \tilde \Delta_2\2 {E_x^\rc} | \leq C \Exp [ \| \Delta \|^2 \2 {E_x^\rc} ]
= O ( \| x \|^{-\eps'} )$,
for some $\eps' >0$. Thus $\Exp_x ( \tilde \Delta_1 \tilde \Delta_2\2 {E_x} ) = O ( \| x \|^{-\eps'} )$. A similar argument gives the same bound for $\Exp_x  [ ( \tilde \Delta_1^2 - \tilde \Delta_2^2) \2 { E_x}  ]$.
Also, from~\eqref{eq:tilde-interior-plus} and~\eqref{eq:h-derivatives}, $\Exp_x ( \langle D h(Tx), \tilde \Delta \rangle ) =  O ( \| x \|^{-2-\eps} )$
and, by~\eqref{eq:big-jumps}, $\Exp_x | \langle D h(Tx), \tilde \Delta \rangle \2 {E_x^\rc} | = O ( \| x \|^{-2-\eps'} )$
for some $\eps' >0$. Hence $\Exp_x  [ \langle D h(Tx), \tilde \Delta \rangle \2 { E_x} ] = O ( \| x \|^{-2-\eps'} )$.
Finally, by~\eqref{eq:tilde-interior-plus} and~\eqref{eq:h-derivatives}, 
\begin{align*} \Exp_x \langle D h(Tx), \tilde \Delta \rangle^2 & = \Exp_x \left( ( Dh (Tx) )^\tra \tilde \Delta \tilde \Delta^\tra Dh (Tx) \right)\\
& = ( Dh (Tx) )^\tra  Dh (Tx) + O ( \| x \|^{-2-\eps} ) \\
& = ( D_1 h(Tx) )^2 + ( D_2 h(Tx) )^2  + O ( \| x \|^{-2-\eps} ),\end{align*}
while, by~\eqref{eq:big-jumps}, $ \Exp_x | \langle D h(Tx), \tilde \Delta \rangle^2 \2 {E_x^\rc } | = O ( \| x \|^{-2-\eps'} )$.
Putting all these estimates into~\eqref{eq:ell-increment-small-jump} gives
\begin{align*}
  \Exp_x \bigl[ ( \ell (\xi_1) - \ell (\xi_0) ) \2 { E_x} \bigr]  
 =   - \frac{ ( D_1 h(Tx) )^2 + ( D_2 h(Tx) )^2}{2(h(Tx))^2} + O ( \| x \|^{-2-\eps'} )
,\end{align*}
for some $\eps' >0$.
On the other hand, for all $\| x \|$ sufficiently large,
$| \ell (x+y) - \ell (x) | \leq C \log \log \| x \| + C \log \log \| y \|$.
For any $p>2$ and $\delta \in (\frac{2}{p}, 1)$, we may (and do) choose
 $q > 0$ sufficiently small such that $\delta (p-q) >2$, and then, by~\eqref{eq:big-jumps},
\begin{align}
\label{eq:ell-increment-big-jump}
  \Exp_x \bigl[ ( \ell (\xi_1) - \ell (\xi_0) ) \2 { E^\rc_x} \bigr]  
& \leq C \Exp_x \bigl[ \| \Delta \|^{q} \2 { E^\rc_x} \bigr] \nonumber\\
& = O ( \| x \|^{-\delta(p-q)} ) = O ( \| x \|^{-2-\eps'} ) ,\end{align}
for some $\eps'>0$.
Thus we conclude that
\begin{align*}
  \Exp_x \bigl[   \ell (\xi_1) - \ell (\xi_0)  \bigr]  
 =   - \frac{ ( D_1 h(Tx) )^2 + ( D_2 h(Tx) )^2}{2(h(Tx))^2} + O ( \| x \|^{-2-\eps'} ),
\end{align*}
for some $\eps' >0$.
Then~\eqref{eq:ell-increment-interior} follows from~\eqref{eq:h-derivatives}.

Next suppose that $x \in S_B$. Truncating~\eqref{eq:h-taylor}, we have for  $x \in \cR_{2r_0}$ and $y \in B_{r/2} (x)$,
\[ \log h (x+y ) = \log h (x) + \frac{ \langle D h(x), y \rangle }{h(x)} + R(x,y) ,\]
where now $|R(x,y)| \leq C \| y \|^2 \| x \|^{-2} (\log \| x \|)^{-1}$ for $\| x \|$ sufficiently large.
It follows that
\begin{align*}
\Exp_x \bigl[ ( \ell (\xi_1) - \ell (\xi_0) ) \2 { E_x } \bigr]  
& = \frac{\Exp_x  \bigl[  \langle D h(Tx), \tilde \Delta \rangle \2 { E_x } \bigr] + O ( \| x \|^{-2})}{h(Tx)} 
.\end{align*}
Using~\eqref{eq:ell-increment-big-jump} and the fact that
$\Exp_x | \langle D h(Tx), \tilde \Delta \rangle \2 {E_x^\rc} | = O ( \| x \|^{-2-\eps'} )$ (as above),
we obtain  
\begin{align}
\label{eq:ell-boundary}
\Exp_x \bigl[   \ell (\xi_1) - \ell (\xi_0)   \bigr]  
& = \frac{\Exp_x  \bigl[  \langle D h(Tx), \tilde \Delta \rangle \bigr] + O ( \| x \|^{-2})}{h(Tx)} 
.\end{align}
From~\eqref{eq:h-derivatives} we have
\[ D h (x) = \frac{1}{\| x \|^2} \begin{pmatrix} x_1 - \eta x_2 \\
x_2 + \eta x_1 \end{pmatrix}, \text{ and hence }
D h (Tx) = \frac{1}{\| Tx \|^2} \begin{pmatrix} \frac{\sigma_2}{s} x_1 -\frac{\rho}{s\sigma_2} x_2 - \frac{\eta}{\sigma_2} x_2 \\
\frac{1}{\sigma_2}x_2 + \frac{\eta\sigma_2}{s} x_1 - \frac{\eta \rho}{s\sigma_2} x_2 \end{pmatrix} ,\]
using~\eqref{eq:T-transform}. If $\beta^\pm <1$ and $x \in S_B^\pm$, 
we have from~\eqref{eq:tilde-drift-boundary-small-beta1} and~\eqref{eq:tilde-drift-boundary-small-beta2} that
\begin{align*}
& \Exp_x \langle D h(Tx), \tilde \Delta \rangle \\
& {} \quad {} = \frac{\mu^\pm (x)}{s^2} \frac{1}{\| Tx \|^2} \biggl\{ 
 a^\pm \Bigl[  \left( s \eta (\beta^\pm-1) - \rho (1+\beta^\pm) \right) \sin \alpha + 
\left( \sigma_2^2 \beta^\pm  - \sigma_1^2 \right) \cos \alpha  \Bigr]
x_1^{\beta^\pm}  \\
& {} \qquad {}   \pm \Bigl[ \sigma_2^2 \sin \alpha + (\rho - s \eta ) \cos \alpha \Bigr] x_1
+ O ( x_1^{2\beta^\pm -1} ) + O(1) \biggr\} .
 \end{align*}
Taking $\eta = \eta_0$ as given by~\eqref{eq:eta-0},
the $\pm x_1$ term vanishes, and, after simplification, we get
\begin{align}
\label{eq:ell-boundary-small-beta} 
 \Exp_x \langle D h(Tx), \tilde \Delta \rangle 
= \frac{\sigma_2^2 a^\pm \mu^\pm (x)}{s^2 \cos \alpha} \frac{1}{\| Tx \|^2} \left(
\left(  \beta^\pm - \bc \right)
x_1^{\beta^\pm}  
+ O ( x_1^{2\beta^\pm -1} ) + O(1) \right) .
 \end{align}
Using~\eqref{eq:ell-boundary-small-beta} in~\eqref{eq:ell-boundary} gives~\eqref{eq:ell-increment-boundary-small-beta}.

On the other hand, if $\beta^\pm >1$  and $x \in S_B^\pm$, we 
have from~\eqref{eq:tilde-drift-boundary-big-beta1} and~\eqref{eq:tilde-drift-boundary-big-beta2} that
\begin{align*}
& \Exp_x \langle D h(Tx), \tilde \Delta \rangle \\
& {} \quad {} = \frac{\mu^\pm (x)}{s^2} \frac{1}{\| Tx \|^2} \biggl\{ 
 \frac{1}{\beta^\pm} \Bigl[  \left( s \eta (\beta^\pm-1) - \rho (1+\beta^\pm) \right) \sin \alpha + 
 \left( \sigma_2^2 \beta^\pm  - \sigma_1^2 \right) \cos \alpha  \Bigr]
x_1 \\
& {} \qquad {}   \pm a^\pm \Bigl[ \sigma_1^2 \sin \alpha - (\rho + s \eta ) \cos \alpha \Bigr] x_1^{\beta^\pm}
+ O ( x_1^{2-\beta^\pm} ) + O(1) \biggr\} .
 \end{align*}
Taking $\eta = \eta_1$ as given by~\eqref{eq:eta-0},
the $\pm x_1^{\beta^\pm}$ term vanishes, and we get
\begin{align}
\label{eq:ell-boundary-big-beta}
\Exp_x \langle D h(Tx), \tilde \Delta \rangle 
=
 \frac{\mu^\pm (x)}{s^2 \cos \alpha}
 \frac{x_1}{\| Tx \|^2}\! \left(\!
  \sigma_1^2 \sin^2\alpha + \sigma_2^2 \cos^2 \alpha - \frac{\sigma_1^2}{\beta^\pm}  - \rho \sin 2\alpha 
		+ o(1) \! \right) ,
 \end{align}
as $\| x \| \to \infty$ (and $x_1 \to \infty$).
Then using~\eqref{eq:ell-boundary-big-beta} in~\eqref{eq:ell-boundary} gives~\eqref{eq:ell-increment-boundary-big-beta}.
\end{proof}

The function $\ell$ is not by itself enough to prove recurrence in the critical cases,
because the estimates in Lemma~\ref{lem:ell-increments} do not guarantee
that $\ell$ satisfies a supermartingale condition for all parameter values of interest.
To proceed, we modify the function slightly to improve its properties near the boundary.
In the case where $\max (\beta^+,\beta^-) = \bc \in (0,1)$,
the following function will be used to prove recurrence,
\[ g_\gamma (x) := g_\gamma (r, \theta) : = \ell (x) + \frac{\theta^2}{(1+r)^{\gamma}} , \]
where the parameter $\eta$ in $\ell$ is chosen as $\eta = \eta_0$ as given by~\eqref{eq:eta-0}.

\begin{lemma}
\label{lem:g-gamma-increments}
Suppose that~\eqref{ass:bounded-moments}, \eqref{ass:zero-drift-plus}, \eqref{ass:reflection}, and~\eqref{ass:covariance-plus}
hold, with $p>2$, $\eps >0$,
 $\alpha^+  = -\alpha^- = \alpha$ for $| \alpha | < \pi/2$,
and $\beta^+, \beta^- \in (0,1)$ with $\beta^+, \beta^- \leq \bc$.
Let $\eta = \eta_0$, and suppose 
\[ 0 < \gamma < \min  ( \beta^+, \beta^-, 1 - \beta^+, 1-\beta^- , p-2 ). \] 
 Then as $\| x \| \to \infty$ with $x \in S_I$,
\begin{align}
\label{eq:g-gamma-increment-interior}
\Exp [ g_\gamma (\xi_{n+1}) - g_\gamma (\xi_n) \mid \xi_n = x] =
  - \frac{1+\eta^2 + o(1)}{2 \| Tx\|^2(\log \| Tx\|)^2}    .\end{align}
Moreover, 
as $\| x \| \to \infty$ with $x \in S_B^\pm$,  
\begin{align}
\label{eq:g-gamma-increment-boundary}
\Exp [ g_\gamma (\xi_{n+1}) - g_\gamma (\xi_n) \mid \xi_n = x]
\leq   - 2 a^\pm \mu^\pm (x) (\cos \alpha +o(1)) \| x \|^{\beta^\pm -2 - \gamma}  
.\end{align}
\end{lemma}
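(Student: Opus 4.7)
The plan is to decompose $g_\gamma = \ell + v$, where $v(x) := \theta(x)^2 (1+r(x))^{-\gamma}$, apply Lemma~\ref{lem:ell-increments} (with $\eta = \eta_0$) to the $\ell$ part, and compute the increment of $v$ by the same Taylor-expansion/truncation scheme used in the proofs of Lemmas~\ref{lem:f-increments} and~\ref{lem:ell-increments}: fix $\delta \in (2/p, 1)$ with $\delta p > 2$, expand $v(\xi_1)$ around $\xi_0$ to second order on the small-jump event $E_x = \{\| \Delta \| < \|x\|^\delta\}$, and bound the contribution from $E_x^\rc$ using the uniform bound $|v| \leq \pi^2$ together with~\eqref{eq:big-jumps}, giving $\Pr_x(E_x^\rc) = O(\|x\|^{-\delta p})$, which is $o(\|x\|^{-2}(\log \|x\|)^{-2})$ and $o(\|x\|^{\beta^\pm - 2 - \gamma})$ respectively. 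The constraint $\gamma < p - 2$ is used to match the range of parameters in the existing small-jump machinery, while $0 < \gamma < \min(\beta^\pm, 1-\beta^\pm)$ is needed in the Taylor analysis below.

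For the interior estimate~\eqref{eq:g-gamma-increment-interior}, differentiating $v$ via~\eqref{eq:polar-diff} shows that $|D_i v(x)| = O(\|x\|^{-1-\gamma})$, $|D_i D_j v(x)| = O(\|x\|^{-2-\gamma})$, and $|D_i D_j D_k v(x)| = O(\|x\|^{-3-\gamma})$ uniformly for $\|x\|$ large. Assumptions~\eqref{ass:zero-drift-plus} and~\eqref{ass:covariance-plus} then bound the first-order Taylor contribution by $O(\|x\|^{-2-\gamma-\eps})$ and the second-order one by $O(\|x\|^{-2-\gamma})$, each of which is $o(\|x\|^{-2}(\log \|x\|)^{-2})$ for $\gamma > 0$. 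Combined with~\eqref{eq:ell-increment-interior}, this gives~\eqref{eq:g-gamma-increment-interior}.

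For the boundary estimate~\eqref{eq:g-gamma-increment-boundary}, the key observation is that on $S_B^\pm$ we have $\theta(x) = \pm a^\pm x_1^{\beta^\pm - 1}(1 + o(1))$, so that using $\sin \theta(x) = \theta(x) + O(\theta(x)^3)$ and $\cos \theta(x) = 1 + O(\theta(x)^2)$ a direct computation yields
\[
 D_2 v(x) = \frac{2\theta(x)\cos\theta(x)}{r(1+r)^\gamma} - \frac{\gamma\theta(x)^2\sin\theta(x)}{(1+r)^{\gamma+1}}
 = \pm 2 a^\pm x_1^{\beta^\pm - 2 - \gamma}(1+o(1)).
\]
Together with $\Exp_x \Delta_2 = \mp \mu^\pm(x)\cos\alpha + O(x_1^{\beta^\pm - 1})$ from~\eqref{eq:drift-boundary-small-beta2}, this produces the announced first-order contribution $-2 a^\pm \mu^\pm(x) (\cos \alpha + o(1)) x_1^{\beta^\pm - 2 - \gamma}$. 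The companion term $(\Exp_x \Delta_1) D_1 v(x)$ is $O(x_1^{2\beta^\pm - 3 - \gamma})$ (because $D_1 v \sim \theta^2/r^{1+\gamma}$ carries one extra power of $\theta$), while second-order Taylor terms contribute $O(x_1^{-2-\gamma})$; both are $o(x_1^{\beta^\pm - 2 - \gamma})$ thanks to $\beta^\pm \in (0,1)$. In the sub-critical case $\beta^\pm < \bc$ the $\ell$-contribution from~\eqref{eq:ell-increment-boundary-small-beta} is non-positive at leading order and only strengthens the inequality, while in the critical case $\beta^\pm = \bc$ the leading $\ell$-term vanishes and the residual is $O(x_1^{2\beta^\pm - 3}/\log x_1)$, which is $o(x_1^{\beta^\pm - 2 - \gamma})$ precisely when $\gamma < 1 - \beta^\pm$.

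The main obstacle is the tight bookkeeping in the boundary calculation: the window $0 < \gamma < \min(\beta^\pm, 1-\beta^\pm, p-2)$ is designed to balance three competing demands simultaneously --- the interior $v$-perturbation must be lower order than the $\ell$-leading term, the new $v$-drift on the boundary must dominate all error terms (including the subtle residual from $\ell$ in the critical case $\beta^\pm = \bc$), and the $p$th-moment truncation must remain valid. Once one commits to the form $v(x) = \theta(x)^2(1+r(x))^{-\gamma}$ and verifies these three compatibilities, the rest of the argument is a routine application of the expansions already developed in Lemmas~\ref{lem:f-increments} and~\ref{lem:ell-increments}.
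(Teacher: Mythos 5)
Your proposal is correct and follows essentially the same approach as the paper's proof: the decomposition $g_\gamma = \ell + u_\gamma$ with $u_\gamma(x) = \theta^2(1+r)^{-\gamma}$, the use of Lemma~\ref{lem:ell-increments} with $\eta = \eta_0$, the first-order Taylor expansion of $u_\gamma$ with remainder $O(\|y\|^2\|x\|^{-2-\gamma})$, the small/large-jump split via \eqref{eq:big-jumps}, and the error comparisons forcing $\gamma < \min(\beta^\pm, 1-\beta^\pm)$ all match the paper. The only cosmetic difference is your truncation exponent $\delta \in (2/p,1)$ versus the paper's $\delta \in ((2+\gamma)/p,1)$ (the latter being why the lemma assumes $\gamma < p-2$); both choices suffice, since, as you observe, $\gamma < \beta^\pm$ already makes $\delta p > 2$ enough to render $\Pr_x(E_x^{\mathrm{c}})$ negligible on the boundary.
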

\begin{proof}
Set $u_\gamma ( x) := u_\gamma (r,\theta) := \theta^2 (1+r)^{-\gamma}$, and note that, by~\eqref{eq:polar-diff}, for $x_1 >0$,
\begin{align*}
 D_1 u_\gamma (x)   = - \frac{2\theta \sin \theta}{r (1+r)^{\gamma}} - \frac{\gamma \theta^2 \cos \theta}{(1+r)^{1+\gamma}} , ~~~  D_2 u_\gamma (x)  =   \frac{2\theta \cos \theta}{r (1+r)^{\gamma}} - \frac{\gamma \theta^2 \sin \theta}{(1+r)^{1+\gamma}} ,
\end{align*}
and $| D_i D_j u_\gamma (x) | = O (r^{-2-\gamma})$ for any $i,j$. 
So, by Taylor's formula~\eqref{eq:taylor-second-order}, for all $y \in B_{r/2} (x)$,
\[ u_\gamma (x +y ) = u_\gamma (x) + \langle D u_\gamma (x), y \rangle + R(x,y),\]
where $| R(x,y) | \leq C \| y \|^2 \| x \|^{-2-\gamma}$ for all $\| x \|$ sufficiently large.
Once more define the event $E_x = \{ \| \Delta \| < \| x \|^{\delta} \}$, where now $\delta \in ( \frac{2+\gamma}{p},1)$.
Then
\[ \Exp_x \bigl[ ( u_\gamma (\xi_1) - u_\gamma (\xi_0) ) \2 {E_x } \bigr] = \Exp_x \bigl[ \langle D u_\gamma (x), \Delta \rangle \2 {E_x } \bigr] +  O (\|x\|^{-2-\gamma}) .\]
Moreover, 
$\Exp_x | \langle D u_\gamma (x), \Delta \rangle \2 {E^\rc_x } | \leq C \| x \|^{-1-\gamma} \Exp_x ( \| \Delta \| \2 {E^\rc_x } )
= O ( \| x \|^{-2-\gamma})$, by~\eqref{eq:big-jumps} and the fact that $\delta > \frac{2}{p} > \frac{1}{p-1}$. Also, since $u_\gamma$ is uniformly bounded,
\[  \Exp_x \bigl[ | u_\gamma (\xi_1) - u_\gamma (\xi_0) | \2 {E^\rc_x } \bigr] \leq C \Pr_x ( E^\rc_x ) = O ( \| x \|^{-p\delta} ) ,\]
by~\eqref{eq:big-jumps}. Since $p\delta > 2+\gamma$, it follows that
\begin{equation}
\label{eq:u-increment}
 \Exp_x \bigl[  u_\gamma (\xi_1) - u_\gamma (\xi_0)   \bigr] = \Exp_x  \langle D u_\gamma (x), \Delta \rangle   +  O (\|x\|^{-2-\gamma}) .\end{equation}
For $x \in S_I$, it follows from~\eqref{eq:u-increment} and~\eqref{ass:zero-drift-plus} that $\Exp_x [ u_\gamma (\xi_1) - u_\gamma (\xi_0) ] = O (\|x\|^{-2-\gamma})$, and 
combining this with~\eqref{eq:ell-increment-interior} we get~\eqref{eq:g-gamma-increment-interior}.

Let $\beta = \max (\beta^+, \beta^- ) < 1$. For $x \in S$, 
$ | \theta (x) | = O ( r^{\beta -1})$ as $\| x \| \to \infty$, so~\eqref{eq:u-increment} gives
\[ \Exp_x [ u_\gamma (\xi_1) - u_\gamma (\xi_0) ] = \frac{2\theta \cos \theta \Exp_x \Delta_2}{\| x \| (1+\|x\|)^{\gamma}} + 
O (\|x\|^{2\beta -3-\gamma} ) + O (\|x\|^{-2-\gamma}) .\]
If $x \in S_B^\pm$ then
$\theta = \pm a^\pm (1+o(1)) x_1^{\beta^\pm-1}$ and, by~\eqref{eq:drift-boundary-small-beta2}, 
 $\Exp_x \Delta_2 = \mp \mu^\pm (x) \cos \alpha +o(1)$, so 
\begin{equation}
\label{eq:u-increment-boundary}
 \Exp_x [ u_\gamma (\xi_1) - u_\gamma (\xi_0) ] = - 2 a^\pm \mu^\pm (x) (\cos \alpha +o(1)) \| x \|^{\beta^\pm -2 - \gamma}   . \end{equation}
For $\eta = \eta_0$ and $\beta^+, \beta^- \leq \bc$, we
have from~\eqref{eq:ell-increment-boundary-small-beta} that
\[ 
\Exp_x [ \ell (\xi_{1}) - \ell (\xi_0)  ] 
\leq  \frac{1}{\|T x\|^2 \log\|T x\|}  \left( O ( \| x \|^{2\beta^\pm-1} ) + O(1) \right)  .\]
Combining this with~\eqref{eq:u-increment-boundary}, we obtain~\eqref{eq:g-gamma-increment-boundary},
provided that we choose $\gamma$ such that $\beta^\pm -2 - \gamma > 2 \beta^\pm -3$ and $\beta^\pm -2 - \gamma > -2$, that is,
$\gamma < 1 - \beta^\pm$ and $\gamma < \beta^\pm$.
\end{proof}

In the case where $\beta^+,\beta^->1$, 
we will use the function
\[ w_\gamma (x) := \ell (x) - \frac{x_1}{(1+\| x \|^2)^{\gamma}} , \]
where the parameter $\eta$ in $\ell$ is now chosen as $\eta = \eta_1$ as defined at~\eqref{eq:eta-0}.
A similar function was used in~\cite{afm}.

\begin{lemma}
\label{lem:w-increments}
Suppose that~\eqref{ass:bounded-moments}, \eqref{ass:zero-drift-plus}, \eqref{ass:reflection}, and~\eqref{ass:covariance-plus}
hold, with $p>2$, $\eps >0$,
 $\alpha^+  = -\alpha^- = \alpha$ for $| \alpha | < \pi/2$,
and $\beta^+, \beta^- >1$
Let $\eta = \eta_1$, and suppose that 
\[ \frac{1}{2} < \gamma < \min \left( 1 - \frac{1}{2\beta^+}, 1 - \frac{1}{2\beta^-} , \frac{p-1}{2} \right) .\]
 Then as $\| x \| \to \infty$ with $x \in S_I$,
\begin{align}
\label{eq:w-increment-interior}
\Exp [ w_\gamma ( \xi_{n+1} ) - w_\gamma ( \xi_n ) \mid \xi_n = x ] =
 - \frac{1+\eta^2 + o(1)}{2 \| Tx\|^2(\log \| Tx\|)^2}  .\end{align}
Moreover,   
as $\| x \| \to \infty$ with $x \in S^\pm_B$,
\begin{align} 
\label{eq:w-increment-boundary} 
\Exp [ w_\gamma ( \xi_{n+1} ) - w_\gamma ( \xi_n ) \mid \xi_n = x ] 
= - \frac{\mu^\pm (x) \cos \alpha + o(1)}{\| x \|^{2\gamma}} .\end{align}
\end{lemma}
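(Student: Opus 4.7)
The plan is to decompose $w_\gamma = \ell - v$ with $v(x) := x_1 (1+\|x\|^2)^{-\gamma}$, invoke Lemma~\ref{lem:ell-increments} (with $\eta = \eta_1$) for the $\ell$ part, and compute the increment of the correction $v$ via a second-order Taylor expansion on the event $E_x := \{\|\Delta\| < \|x\|^\delta\}$, combined with the truncation bound~\eqref{eq:big-jumps}. The required partial derivatives are
\begin{align*}
 D_1 v(x) &= (1+\|x\|^2)^{-\gamma} - 2\gamma x_1^2 (1+\|x\|^2)^{-\gamma-1}, \\
 D_2 v(x) &= -2\gamma x_1 x_2 (1+\|x\|^2)^{-\gamma-1},
\end{align*}
with $|D_i D_j v(x)| = O((|x_1|+|x_2|)\|x\|^{-2\gamma-2})$ and $|D_i D_j D_k v(x)| = O(\|x\|^{-2-2\gamma})$. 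The restriction $\gamma > 1/2$ ensures $v$ is globally bounded, so~\eqref{eq:big-jumps} gives $\Exp_x [ |v(\xi_1) - v(\xi_0)| \2{E_x^\rc} ] = O(\|x\|^{-\delta p})$; this is $o(\|x\|^{-2\gamma})$ for any $\delta \in (2\gamma/p, 1)$, which exists because $\gamma < (p-1)/2 < p/2$.

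For $x \in S_I$, the first- and second-order Taylor contributions, using~\eqref{ass:zero-drift-plus} and~\eqref{ass:covariance-plus}, are of orders $O(\|x\|^{-1-\eps-2\gamma})$ and $O(\|x\|^{-1-2\gamma})$ respectively; since $\gamma > 1/2$, both are $o(\|x\|^{-2}(\log\|x\|)^{-2})$, so combining with~\eqref{eq:ell-increment-interior} yields~\eqref{eq:w-increment-interior}. For $x \in S_B^\pm$ with $\beta^\pm > 1$, the relation $|x_2| = a^\pm x_1^{\beta^\pm}(1+o(1))$ forces $x_1 = O(\|x\|^{1/\beta^\pm}) = o(\|x\|)$ and $\|x\| = |x_2|(1+o(1))$, whence $D_1 v(x) = \|x\|^{-2\gamma}(1+o(1))$ while $D_2 v(x) = O(\|x\|^{1/\beta^\pm - 1 - 2\gamma}) = o(\|x\|^{-2\gamma})$. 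Together with $\Exp_x \Delta_1 = \mu^\pm(x)\cos\alpha + o(1)$ and $\Exp_x \Delta_2 = O(1)$ from Lemma~\ref{lem:increments}, the first-order Taylor term delivers $\Exp_x [ v(\xi_1) - v(\xi_0) ] = (\mu^\pm(x)\cos\alpha + o(1))\|x\|^{-2\gamma}$, while the second-order term contributes $O(\|x\|^{1/\beta^\pm - 2 - 2\gamma}) + O(\|x\|^{-1-2\gamma}) = o(\|x\|^{-2\gamma})$. Meanwhile,~\eqref{eq:ell-increment-boundary-big-beta} is of order $\|x\|^{1/\beta^\pm - 2}(\log\|x\|)^{-1}$, which is $o(\|x\|^{-2\gamma})$ precisely when $\gamma < 1 - 1/(2\beta^\pm)$. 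The identity $w_\gamma = \ell - v$ then yields~\eqref{eq:w-increment-boundary}.

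The main obstacle is the asymmetric error accounting on the boundary: since $x_1$ and $x_2$ live on vastly different scales when $\beta^\pm > 1$, the Taylor remainders must be carefully attributed, and it is this asymmetry that isolates $D_1 v$ as the source of the leading $\mu^\pm(x)\cos\alpha\|x\|^{-2\gamma}$ term. The three constraints on $\gamma$ arise independently and each plays a distinct role: $\gamma > 1/2$ annihilates the interior second-order Taylor contribution; $\gamma < 1 - 1/(2\beta^\pm)$ ensures the boundary $\ell$-asymptotic is dominated by the $v$-asymptotic; and $\gamma < (p-1)/2$ supplies the slack needed for the truncation argument controlling big jumps.
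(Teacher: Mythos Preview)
Your proof is correct and follows essentially the same route as the paper: the same decomposition $w_\gamma = \ell - v$ (the paper writes $q_\gamma$ for your $v$), the same appeal to Lemma~\ref{lem:ell-increments} with $\eta=\eta_1$, and the same Taylor-plus-truncation estimate for the correction term, with the constraints on $\gamma$ entering at precisely the points you identify. The only cosmetic differences are that the paper truncates Taylor at first order (absorbing your second-order term into the $O(\|x\|^{-1-2\gamma})$ remainder) and chooses $\delta \in ((1+2\gamma)/p,1)$, which is where the hypothesis $\gamma < (p-1)/2$ is used sharply; your weaker choice $\delta > 2\gamma/p$ suffices for the big-jump bound on the globally bounded $v$, but you should note that converting $\Exp_x[\Delta \2{E_x}]$ to $\Exp_x \Delta$ (needed when invoking~\eqref{ass:zero-drift-plus}) requires the extra $O(\|x\|^{-\delta(p-1)})$ slack.
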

\begin{proof}
Let $q_\gamma (x) := x_1 (1+\| x \|^2)^{-\gamma}$. Then
\[ D_1 q_\gamma (x) = \frac{1}{(1+\| x\|^2)^{\gamma}} - \frac{2\gamma x_1^2}{(1+\| x \|^2)^{1+\gamma}},
~~~
D_2 q_\gamma (x) = - \frac{2\gamma x_1 x_2}{(1+\| x\|^2)^{1+\gamma}} ,\]
and $| D_i D_j q_\gamma (x) | = O ( \| x \|^{-1-2\gamma} )$
for any $i,j$. Thus by Taylor's formula, for $y \in B_{r/2} (x)$,
\[ q_\gamma (x + y ) - q_\gamma (x) = \langle D q_\gamma (x), y \rangle + R (x,y), \]
where $| R(x,y) | \leq C \| y \|^2 \| x \|^{-1-2\gamma}$ for $\| x \|$ sufficiently large.
Once more let $E_x = \{ \| \Delta \| < \| x \|^{\delta} \}$, where now we take $\delta \in (\frac{1+2\gamma}{p}, 1)$.
Then
\[ \Exp_x \bigl[ ( q_\gamma (\xi_1 ) - q_\gamma (\xi_0) ) \2 { E_x} \bigr] = \Exp_x \bigl[ \langle D q_\gamma (x), \Delta \rangle  \2 { E_x} \bigr] + O ( \| x \|^{-1-2\gamma} ) .\]
Moreover, we get from~\eqref{eq:big-jumps} that
$\Exp_x | \langle D q_\gamma (x), \Delta \rangle  \2 { E^\rc_x} | = O ( \| x \|^{-2\gamma-\delta(p-1) })$,
where $\delta (p-1) > 2\gamma > 1$, and, since $q_\gamma$ is uniformly bounded for $\gamma > 1/2$,
\[ \Exp_x \bigl[ ( q_\gamma (\xi_1 ) - q_\gamma (\xi_0) ) \2 { E^\rc_x} \bigr] = O ( \| x \|^{-p\delta} ) ,\]
where $p\delta > 1 +2\gamma$. Thus
\begin{equation}
\label{eq:q-increment}
 \Exp_x \bigl[   q_\gamma (\xi_1 ) - q_\gamma (\xi_0)   \bigr] = \Exp_x  \langle D q_\gamma (x), \Delta \rangle  + O ( \| x \|^{-1-2\gamma} ) .\end{equation}
If $x \in S_I$, then~\eqref{ass:zero-drift-plus} gives $\Exp_x \langle D q_\gamma (x), \Delta \rangle = O ( \| x\|^{-1-2\gamma})$ and with~\eqref{eq:ell-increment-interior}
we get~\eqref{eq:w-increment-interior}, since $\gamma > 1/2$.
On the other hand, suppose that $x \in S_B^\pm$ and $\beta^\pm >1$.
Then $\| x\| \geq c x_1^{\beta^\pm}$ for some $c>0$, 
so $x_1 = O ( \| x \|^{1/\beta^\pm} )$. So, by~\eqref{eq:q-increment},
\[ \Exp_x [ q_\gamma (\xi_1) - q_\gamma (\xi_0) ] 
= \frac{\Exp_x \Delta_1}{(1+\| x \|^2)^{\gamma}} + O \left( \|x\|^{\frac{1}{\beta^\pm} - 1 -2\gamma} \right).\]
Moreover,  by~\eqref{eq:drift-boundary-big-beta1}, 
 $\Exp_x \Delta_1 = \mu^\pm (x) \cos \alpha + o(1)$. Combined with~\eqref{eq:ell-increment-boundary-big-beta},
this yields~\eqref{eq:w-increment-boundary}, provided that $2 \gamma \leq 2 - (1/\beta^\pm)$,
again using the fact that $x_1 = O ( \| x \|^{1/\beta^\pm} )$. This completes the proof.
\end{proof}

\section{Proofs of main results}
\label{sec:proofs}

We obtain our recurrence classification and quantification of passage-times  
via Foster--Lyapunov criteria (cf.~\cite{foster}). As we do not assume any irreducibility, the most convenient
form of the criteria are those for discrete-time adapted processes presented in~\cite{mpw}. However,
the recurrence criteria in \cite[\S 3.5]{mpw} are formulated for processes
on $\RP$, and, strictly, do not apply directly here. Thus we present appropriate generalizations here, as they may also be useful elsewhere.
The following recurrence result is based on Theorem~3.5.8 of~\cite{mpw}.

\begin{lemma}
\label{lem:recurrence}
Let $X_0, X_1, \ldots$ be a stochastic process on $\R^d$ adapted to a filtration $\cF_0, \cF_1, \ldots$.
Let $f : \R^d \to \RP$ be such that $f(x) \to \infty$ as $\| x \| \to \infty$, and $\Exp f(X_0) < \infty$. Suppose that there
exist $r_0 \in \RP$ and $C < \infty$ for which, for all $n \in \ZP$,
\begin{align*}
\Exp [ f (X_{n+1} ) - f(X_n) \mid \cF_n] & \leq 0, \text{ on } \{ \| X_n \| \geq r_0 \}; \\
\Exp [ f (X_{n+1} ) - f(X_n) \mid \cF_n] & \leq C, \text{ on } \{ \| X_n \| < r_0 \}.\end{align*}
Then if $\Pr ( \limsup_{n \to \infty} \| X_n \| = \infty ) =1$, we have that $\Pr ( \liminf_{n \to \infty} \| X_n \| \leq r_0 ) = 1$.
\end{lemma}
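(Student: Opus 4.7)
The plan is to combine Doob's convergence theorem for non-negative supermartingales with an induction over successive return times to the set $\{x \in \R^d : \|x\| \leq r_0\}$. Define $\tau_1 := \inf\{n \geq 0 : \|X_n\| \leq r_0\}$ and, inductively, $\tau_{k+1} := \inf\{n > \tau_k : \|X_n\| \leq r_0\}$ for $k \geq 1$, with $\inf \emptyset := \infty$. These are all stopping times with respect to $(\cF_n)$, and the goal is to show by induction that $\Pr(\tau_k < \infty) = 1$ for every $k \geq 1$. Once this is established, $\|X_n\| \leq r_0$ holds for infinitely many $n$ a.s., which immediately yields $\liminf_{n\to\infty} \|X_n\| \leq r_0$ a.s., as required.

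For the base case $k = 1$, I would consider the stopped process $Y_n := f(X_{n \wedge \tau_1})$. On $\{n < \tau_1\}$ one has $\|X_n\| > r_0$, so the drift hypothesis gives $\Exp[Y_{n+1} - Y_n \mid \cF_n] \leq 0$; on $\{n \geq \tau_1\}$ the process is frozen and $Y_{n+1} = Y_n$. Hence $(Y_n)$ is a non-negative supermartingale, and Doob's theorem gives $Y_n \to Y_\infty$ a.s.\ with $Y_\infty < \infty$ a.s. If $\Pr(\tau_1 = \infty) > 0$, then on that event $Y_n = f(X_n)$ would converge to a finite limit; but since $f(x) \to \infty$ as $\|x\| \to \infty$, the non-confinement hypothesis $\Pr(\limsup_n \|X_n\| = \infty) = 1$ forces $\limsup_n f(X_n) = \infty$ a.s., a contradiction. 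Therefore $\Pr(\tau_1 < \infty) = 1$.

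For the inductive step, assume $\tau_k < \infty$ a.s.\ and consider $M^{(k)}_n := f(X_{(\tau_k + 1 + n) \wedge \tau_{k+1}})$ for $n \geq 0$, adapted to $\cG_n := \cF_{\tau_k + 1 + n}$ on the a.s.\ event $\{\tau_k < \infty\}$. On $\{\tau_k + 1 + n < \tau_{k+1}\}$, the point $X_{\tau_k + 1 + n}$ lies strictly outside the ball of radius $r_0$ (being between two consecutive returns), so the drift hypothesis again yields the supermartingale inequality, and the base-case argument runs verbatim: $M^{(k)}_n$ converges a.s.\ to a finite limit, while on $\{\tau_{k+1} = \infty\}$ one would have $M^{(k)}_n = f(X_{\tau_k + 1 + n})$ for all $n$, and non-confinement of the shifted sequence $(X_{\tau_k + 1 + n})_n$ would force $\limsup_n f(X_{\tau_k + 1 + n}) = \infty$, a contradiction. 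Thus $\tau_{k+1} < \infty$ a.s., closing the induction.

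The only technical subtlety is verifying that $M^{(k)}_n$ is a non-negative supermartingale on the shifted filtration, which is routine given that each $\tau_k$ is a stopping time and the drift hypothesis is assumed for all $n \in \ZP$. Note that the upper bound $\leq C$ on the drift inside $\{\|X_n\| < r_0\}$ and the moment assumption $\Exp f(X_0) < \infty$ play no direct role in this qualitative argument, since non-negativity of $f$ alone is enough for Doob's theorem; they are presumably retained for consistency with quantitative refinements analogous to those in~\cite{mpw}.
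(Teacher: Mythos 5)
Your proof is correct, but it takes a genuinely different route from the paper's. The paper fixes a deterministic time $n$, introduces both the return time $\lambda_n := \min\{m \geq n : \|X_m\| \leq r_0\}$ and a large-radius exit time $\sigma_n := \min\{m \geq n : \|X_m\| \geq r\}$, observes that $f(X_{m \wedge \lambda_n \wedge \sigma_n})$ is a non-negative supermartingale, and then uses Fatou's lemma to obtain the quantitative bound
\[
\Pr(\lambda_n < \sigma_n) \geq 1 - \frac{\Exp f(X_n)}{\inf_{\|y\| \geq r} f(y)},
\]
before sending $r \to \infty$; finiteness of $\Exp f(X_n)$ (hence the need for $\Exp f(X_0) < \infty$ and the $\leq C$ bound on the small ball) is essential to this estimate. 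Your argument is instead qualitative: you build successive return times $\tau_k$ and show each is a.s.\ finite by applying Doob's a.s.\ convergence for non-negative supermartingales and deriving a contradiction with non-confinement, since $f(x) \to \infty$ forces $\limsup_n f(X_n) = \infty$ a.s. Your observation that the hypotheses $\Exp f(X_0) < \infty$ and the constant $C$ can be dropped for this qualitative conclusion is correct and is a genuine (if modest) sharpening.

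Two small remarks. First, in the inductive step you need the a.s.\ convergence of a non-negative supermartingale $M^{(k)}_n$ whose initial value $f(X_{\tau_k + 1})$ is a.s.\ finite but not obviously integrable; the standard fix (normalize by $1 + M^{(k)}_0$, which is $\cG_0$-measurable, or condition on $\cG_0$) is routine but worth recording explicitly, since the textbook statement of Doob's theorem is usually stated for $L^1$ initial data. Second, the induction over successive random return times is actually avoidable: applying your base-case argument at each \emph{fixed} deterministic time $n$ (with return time $\min\{m \geq n : \|X_m\| \leq r_0\}$) gives $\Pr(\inf_{m \geq n} \|X_m\| \leq r_0) = 1$ for every $n$, and intersecting over $n$ directly yields $\liminf_n \|X_n\| \leq r_0$ a.s.; this sidesteps the integrability-at-random-times issue entirely and is essentially the skeleton of the paper's proof.
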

\begin{proof}
By hypothesis, $\Exp f(X_n) < \infty$ for all $n$. Fix $n \in \ZP$ and let $\lambda_{n} := \min \{ m \geq n : \| X_m \| \leq r_0 \}$
and, for some $r > r_0$, set $\sigma_n := \min \{ m \geq n : \| X_m \| \geq  r \}$.
Since $\limsup_{n \to \infty} \| X_n \| = \infty$ a.s., we have that $\sigma_n < \infty$, a.s.
Then $f(X_{m \wedge \lambda_n \wedge \sigma_n } )$, $m \geq n$, is a non-negative supermartingale
with $\lim_{m \to \infty} f(X_{m \wedge \lambda_n \wedge \sigma_n } ) = f(X_{\lambda_n \wedge \sigma_n } )$, a.s. 
By Fatou's lemma
and the fact that $f$ is non-negative,
\[ \Exp f (X_n) \geq \Exp f(X_{\lambda_n \wedge \sigma_n } ) \geq \Pr ( \sigma_n < \lambda_n ) \inf_{y : \| y \| \geq r} f (y) .\]
So
\begin{align*}
 \Pr \left( \inf_{m \geq n} \| X_m \| \leq r_0 \right)
& \geq \Pr (\lambda_n < \infty) \geq \Pr (\lambda_n < \sigma_n )  \geq 1 - \frac{\Exp f(X_n)}{\inf_{y : \| y \| \geq r} f (y) } .\end{align*}
Since $r > r_0$ was arbitrary, and $\inf_{y : \| y \| \geq r} f (y) \to \infty$ as $r \to \infty$,
it follows that, for fixed $n \in \ZP$,
$\Pr ( \inf_{m \geq n} \| X_m \| \leq r_0  ) = 1$. 
Since this holds for all $n \in \ZP$, the result follows.
\end{proof}

The corresponding transience result is based on Theorem~3.5.6 of~\cite{mpw}.

\begin{lemma}
\label{lem:transience}
Let $X_0, X_1, \ldots$ be a stochastic process on $\R^d$ adapted to a filtration $\cF_0, \cF_1, \ldots$.
Let $f : \R^d \to \RP$ be such that $\sup_x f(x) < \infty$, $f(x) \to 0$ as $\| x \| \to \infty$, and $\inf_{x : \| x \| \leq r} f(x) >0$
for all $r \in \RP$. Suppose that there
exists $r_0 \in \RP$ for which, for all $n \in \ZP$,
\begin{align*}
\Exp [ f (X_{n+1} ) - f(X_n) \mid \cF_n] & \leq 0, \text{ on } \{ \| X_n \| \geq r_0 \}.\end{align*}
Then if $\Pr ( \limsup_{n \to \infty} \| X_n \| = \infty ) =1$, we have that $\Pr ( \lim_{n \to \infty} \| X_n \| = \infty ) = 1$.
\end{lemma}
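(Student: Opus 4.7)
The plan is to follow the classical supermartingale approach used in the proof of Lemma~\ref{lem:recurrence}, now exploiting the decay of $f$ at infinity to bound the probability that, once far from the origin, the process ever returns to a bounded region. The argument splits naturally into two steps, combined via the non-confinement hypothesis.

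First, fix $R \geq r_0$ and set $\tau_R := \min\{n \geq 0 : \|X_n\| \leq R\}$. The drift hypothesis ensures that $(f(X_{n \wedge \tau_R}))_{n \geq 0}$ is a non-negative supermartingale, bounded since $\sup_x f(x) < \infty$. Writing $\delta_R := \inf_{\|y\| \leq R} f(y)$, which is strictly positive by assumption, and noting that $f(X_{\tau_R}) \geq \delta_R$ on $\{\tau_R < \infty\}$, Fatou's lemma applied to $f(X_{n \wedge \tau_R})$ yields
\[
\delta_R \, \Pr( \tau_R < \infty \mid \cF_0 ) \leq \Exp \bigl[ f(X_{\tau_R}) \1{\tau_R < \infty} \bigm| \cF_0 \bigr] \leq f(X_0),
\]
so that $\Pr( \tau_R < \infty \mid \cF_0 ) \to 0$ as $\| X_0 \| \to \infty$, by the decay of $f$ at infinity.

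Next, use non-confinement to apply this bound from a random starting point of arbitrarily large norm. For $M > R$, let $\sigma_M := \min\{n \geq 0 : \|X_n\| \geq M\}$; by non-confinement, $\sigma_M < \infty$ almost surely. The shifted process $(X_{\sigma_M + m})_{m \geq 0}$ is adapted to the filtration $(\cF_{\sigma_M + m})_{m \geq 0}$ and inherits the drift inequality from the original hypothesis (which is conditional on $\cF_n$ for every $n$); repeating the argument of the previous paragraph for the shifted process, with $\tau_R^{(M)} := \min\{n \geq \sigma_M : \|X_n\| \leq R\}$, gives
\[
\delta_R \, \Pr \bigl( \tau_R^{(M)} < \infty \bigm| \cF_{\sigma_M} \bigr) \leq f(X_{\sigma_M}) \leq \sup_{\|y\| \geq M} f(y).
\]
Taking expectations and letting $M \to \infty$ shows that $\Pr( \| X_n \| \leq R \text{ i.o.} ) = 0$, for every $R \geq r_0$. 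Intersecting over $R \in \{r_0, r_0+1, r_0+2, \ldots\}$ then gives $\liminf_{n \to \infty} \| X_n \| = \infty$ almost surely, which is the claim.

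The main obstacle is not a difficult calculation but the slightly delicate point, specific to the non-Markovian setting, of re-starting the supermartingale argument at the random time $\sigma_M$: the assumption that the drift inequality holds conditionally on $\cF_n$ for every $n$ is exactly what allows the iteration in the second step without invoking any strong Markov property. Everything else is a routine application of Fatou's lemma together with the decay of $f$ at infinity and the strict positivity of $f$ on bounded sets.
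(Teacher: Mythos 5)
Your proof is correct and follows essentially the same argument as the paper's: stop the process at the first time its norm exceeds $M$ (the paper's $\sigma_r$), use the boundedness of $f$ and Fatou/optional stopping on the stopped supermartingale to bound the conditional probability of later returning below level $R$ by $f(X_{\sigma_M})/\inf_{\|y\|\leq R} f$, and let $M\to\infty$; the paper merely does these two steps in one pass rather than separating the "single-shot" estimate from the restart, and your remark on why the drift condition transports to the random time $\sigma_M$ is exactly the point the paper delegates to its cited optional-stopping theorem.
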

\begin{proof}
Since $f$ is bounded, $\Exp f(X_n) < \infty$ for all $n$. Fix $n \in \ZP$ and $r_1 \geq r_0$.
For $r \in \ZP$ let $\sigma_r := \min \{ n \in \ZP : \| X_n \| \geq r \}$.
Since $\Pr ( \limsup_{n \to \infty} \| X_n \| = \infty ) =1$, we have $\sigma_r < \infty$, a.s.
Let $\lambda_{r} := \min \{ n \geq \sigma_r : \| X_n \| \leq r_1 \}$.
Then $f(X_{n \wedge \lambda_r} )$, $n \geq \sigma_r$, is a non-negative supermartingale,
which converges, on $\{ \lambda_r < \infty \}$, to $f(X_{\lambda_r} )$. By optional stopping
(e.g.~Theorem~2.3.11 of~\cite{mpw}), a.s.,
\[ 
\sup_{x : \| x \| \geq r} f(x) \geq f (X_{\sigma_r}) \geq \Exp [ f(X_{\lambda_r} ) \mid \cF_{\sigma_r} ] \geq \Pr ( \lambda_r < \infty \mid \cF_{\sigma_r} ) \inf_{x : \| x \| \leq r_1} f(x)  .\]
So
\begin{align*}
\Pr ( \lambda_r < \infty ) \leq \frac{\sup_{x : \| x \| \geq r} f(x)}{\inf_{x : \| x \| \leq r_1} f(x) } ,\end{align*}
which tends to $0$ as $r \to \infty$, by our hypotheses on $f$.
Thus,
\[ \Pr \left( \liminf_{n \to \infty} \| X_n \| \leq r_1 \right) = \Pr \left( \cap_{r \in \ZP} \left\{ \lambda_r < \infty \right\} \right)
= \lim_{r \to \infty} \Pr ( \lambda_r < \infty ) = 0 .\]
Since $r_1 \geq r_0$ was arbitrary, we get the result.
\end{proof}

Now we can complete the proof of Theorem~\ref{thm:opposite},
which includes Theorem~\ref{thm:normal-recurrence} as the special case $\alpha=0$.

\begin{proof}[Proof of Theorem~\ref{thm:opposite}.]
Let $\beta = \max (\beta^+,\beta^-)$, and recall the definition
of $\bc$ from~\eqref{eq:betac} and that of $s_0$ from~\eqref{eq:p-def-opposite}.
Suppose first that $0 \leq \beta < 1 \wedge \bc$. Then $s_0 >0$ and we may (and do) choose
$w \in (0,2s_0)$.
Also, take $\gamma \in (0,1)$; note $0 < \gamma w < 1$.
Consider the function $f_w^\gamma$ with $\theta_0 = \theta_1$ given by~\eqref{eq:theta-1}. 
 Then from~\eqref{eq:f-increment-interior}, we see that 
there exist $c >0$ and $r_0 < \infty$ such that, for all $x \in S_I$,
\begin{align}
\label{eq:f-super}
\Exp [ f_w^\gamma  ( \xi_{n+1} ) - f_w^\gamma  ( \xi_n ) \mid \xi_n = x ] 
 \leq - c \| x \|^{\gamma w -2}, \text{ for all } \| x \| \geq r_0 .
\end{align}
By choice of $w$, we have 
 $\beta - (1-w)\bc < 0$,
so~\eqref{eq:f-increment-boundary-small-beta} shows that, 
for all $x \in S_B^\pm$,
\[ 
\Exp [ f_w^\gamma  ( \xi_{n+1} ) - f_w^\gamma  ( \xi_n ) \mid \xi_n = x ] 
\leq - c  \| x \|^{\gamma w -2 +\beta^\pm}
,\]
	for some $c >0$ and all $\| x \|$ sufficiently large. In particular, this means that~\eqref{eq:f-super}
	holds throughout $S$. On the other hand, it follows from~\eqref{eq:f-crude-bound}
	and~\eqref{ass:bounded-moments} that there is a constant $C < \infty$ such that
	\begin{align}
\label{eq:f-exceptional}
\Exp [ f_w^\gamma  ( \xi_{n+1} ) - f_w^\gamma  ( \xi_n ) \mid \xi_n = x ] 
 \leq C, \text{ for all } \| x \| \leq r_0 .
\end{align}
Since $w, \gamma >0$, we have that $ f_w^\gamma (x) \to \infty$
	as $\| x \| \to \infty$. Then by Lemma~\ref{lem:recurrence} with the conditions~\eqref{eq:f-super}
	and~\eqref{eq:f-exceptional} and assumption~\eqref{ass:non-confinement}, we establish recurrence.

Next suppose that $\bc < \beta < 1$.
If $\beta^+ = \beta^- = \beta$, we use the function~$f_w^\gamma$, again with
$\theta_0 = \theta_1$ given by~\eqref{eq:theta-1}. 
We may (and do) choose $\gamma \in (0,1)$ and $w < 0$
with $w > - 2|s_0|$ and $\gamma w > w > 2-p$. By choice of $w$, we have $\beta - (1-w)\bc > 0$.
We have from~\eqref{eq:f-increment-interior} and~\eqref{eq:f-increment-boundary-small-beta} 
that~\eqref{eq:f-super} holds in this case also,
but now $f_w^\gamma (x) \to 0$ as $\| x \| \to \infty$, since $\gamma w <0$. Lemma~\ref{lem:transience} then gives transience when $\beta^+=\beta^-$.

Suppose now that $\bc < \beta < 1$ with $\beta^+ \neq \beta^-$. 
Without loss of generality, suppose that $\beta = \beta^+ > \beta^-$. We now use the function $F_w^{\gamma,\nu}$
defined at~\eqref{eq:F-def},
where, as above, we take $\gamma \in (0,1)$ and $w \in (-2|s_0|,0)$,
and we choose the constants $\lambda, \nu$ with $\lambda <0$ and $\gamma w + \beta^- -2 < 2\nu < \gamma w + \beta^+ -2$. Note that $2\nu < \gamma w -1$, so
$F_w^{\gamma,\nu} (x) = f_w^\gamma (x) ( 1 + o(1))$. With $\theta_0 = \theta_1$ given by~\eqref{eq:theta-1}, and this choice of~$\nu$, Lemma~\ref{lem:F-increments} applies.
The choice of $\gamma$ ensures that the right-hand side of~\eqref{eq:F-increment-interior} is eventually negative,
and the choice of $w$ ensures the same for~\eqref{eq:F-increment-boundary-beta-plus}. Since $\lambda <0$, the right-hand side
of~\eqref{eq:F-increment-boundary-beta-minus} is also eventually negative. Combining these three estimates shows, for all $x \in S$ with $\| x \|$ large enough,
\[ \Exp [ F_w^{\gamma,\nu} (\xi_{n+1} ) - F_w^{\gamma,\nu} (\xi_n) \mid \xi_n = x ] \leq 0 .\]
Since $F_w^{\gamma,\nu} (x) \to 0$ as $\| x \| \to \infty$,  Lemma~\ref{lem:transience} gives transience.

Of the cases where $\beta^+, \beta^- <1$, it remains to consider the borderline
case where $\beta = \bc \in (0,1)$. Here Lemma~\ref{lem:g-gamma-increments}
together with Lemma~\ref{lem:recurrence} proves recurrence.
Finally, if $\beta^+, \beta^- > 1$, we apply Lemma~\ref{lem:w-increments}
together with Lemma~\ref{lem:recurrence} to obtain recurrence. Note that both of these critical cases
require~\eqref{ass:zero-drift-plus} and~\eqref{ass:covariance-plus}.
\end{proof}

Next we turn to moments of passage times: we prove
Theorem~\ref{thm:opposite-moments}, which includes Theorem~\ref{thm:normal-moments} as
the special case $\alpha=0$.
Here the criteria we apply
are from~\cite[\S 2.7]{mpw}, which are heavily based on those from~\cite{aim}.

\begin{proof}[Proof of Theorem~\ref{thm:opposite-moments}.]
Again let $\beta = \max (\beta^+,\beta^-)$.
First we prove the existence of moments part of~(a)(i).
Suppose that $0 \leq \beta < 1 \wedge \bc$, 
so $s_0$ as defined at~\eqref{eq:p-def-opposite}
satisfies $s_0 > 0$.
We use the function $f_w^\gamma$,
with $\gamma \in (0,1)$ and $w \in (0,2s_0)$ as in the first part of the proof of Theorem~\ref{thm:opposite}.
We saw in that proof 
that for these choices of $\gamma, w$ we have that~\eqref{eq:f-super} holds for all $x \in S$.
 Rewriting this slightly,
using the fact that $f_w^\gamma (x)$ is bounded above and below by constants
times $\| x\|^{\gamma w}$ for all $\| x\|$ sufficiently large,
we get that 
there are constants $c >0$
and
$r_0 < \infty$ for which
\begin{equation}
\label{eq:f-recurrence-quantitative}
 \Exp [f_w^\gamma ( \xi_{n+1} ) - f_w^\gamma ( \xi_n ) \mid \xi_n = x ] 
\leq -c ( f_w^\gamma ( x ) )^{1-\frac{2}{\gamma w}}, \text{ for all $x \in S$ with $\| x \| \geq r_0$} . \end{equation}
Then we may apply Corollary~2.7.3 of~\cite{mpw} to get
$\Exp_x ( \tau_r^s ) < \infty$
for any $r \geq r_0$ and any $s < \gamma w /2$.
Taking $\gamma <1$ and $w < 2s_0$ arbitrarily close to their upper bounds,
we get $\Exp_x ( \tau_r^s ) < \infty$ for all $s < s_0$.

Next suppose that $0 \leq \beta \leq \bc$. Let $s > s_0$.
First consider the case where $\beta^+ = \beta^-$. Then we consider $f_w^\gamma$
with $\gamma > 1$, $w > 2s_0$ (so $w>0$), and $0 < w \gamma < 2$.
Then, since $\beta - (1-w) \bc = \bc - \beta +(w-2s_0) \bc >0$,
we have from~\eqref{eq:f-increment-interior} and~\eqref{eq:f-increment-boundary-small-beta} that
\begin{equation}
\label{eq:f-sub}  \Exp [f_w^\gamma ( \xi_{n+1} ) - f_w^\gamma ( \xi_n ) \mid \xi_n = x ] \geq 0,
\end{equation}
for all $x \in S$ with $\| x \|$ sufficiently large.
Now set $Y_n := f_w^{1/w} ( \xi_n)$,
and note that $Y_n$ is bounded above and below by constants times $\| \xi_n\|$, and
 $Y_n^{\gamma w} = f_w^\gamma (\xi_n)$. Write $\cF_n = \sigma ( \xi_0, \xi_1, \ldots, \xi_n)$.
Then we have shown in~\eqref{eq:f-sub}  that
\begin{equation}
\label{eq:non-existence1}
 \Exp [ Y^{\gamma w}_{n+1} - Y^{\gamma w}_n \mid \cF_n ] \geq 0, \text{ on } \{  Y_n > r_1 \} ,\end{equation}
for some $r_1$ sufficiently large. 
Also, from the $\gamma =1/w$ case of~\eqref{eq:f-increment-interior} and~\eqref{eq:f-increment-boundary-small-beta},
\begin{equation}
\label{eq:non-existence2}
\Exp [ Y_{n+1} - Y_n \mid \cF_n ] \geq - \frac{B}{Y_n}, \text{ on } \{  Y_n > r_2 \} ,\end{equation}
for some $B < \infty$ and $r_2$ sufficiently large. (The right-hand side of~\eqref{eq:f-increment-boundary-small-beta}
is still eventually positive, while the right-hand-side of~\eqref{eq:f-increment-interior}
will be eventually negative if $\gamma < 1$.) 
Again let $E_x = \{ \| \Delta \| < \| x \|^\delta \}$ for $\delta \in (0,1)$.
Then from the $\gamma =1/w$ case of~\eqref{eq:h-w-taylor-truncated},
\[ \left| f_w^{1/w} (\xi_1 ) - f_w^{1/w} (\xi_0 ) \right|^2 \2 {E_x} \leq C \| \Delta \|^2 ,\]
while from the $\gamma =1/w$ case of~\eqref{eq:f-crude-bound}  we have
\[ \left| f_w^{1/w} (\xi_1 ) - f_w^{1/w} (\xi_0 ) \right|^2 \2 {E_x^\rc } \leq C \| \Delta \|^{2/\delta} .\]
Taking $\delta \in (2/p,1)$, it follows from~\eqref{ass:bounded-moments} that for some $C<\infty$, a.s.,
\begin{equation}
\label{eq:non-existence3}
 \Exp [ ( Y_{n+1} - Y_n  )^2 \mid \cF_n ] \leq C .\end{equation}
The three conditions~\eqref{eq:non-existence1}--\eqref{eq:non-existence3}
show that we may apply Theorem~2.7.4 of~\cite{mpw}
to get $\Exp_x ( \tau_r^s ) = \infty$ for all $s > \gamma w/2$, all $r$ sufficiently large, and all $x \in S$ with $\| x \| > r$. Hence, taking $\gamma >1$ and $w > 2s_0$
arbitrarily close to their lower bounds, we get $\Exp_x ( \tau_r^s ) = \infty$ for  all $s>s_0$ and appropriate $r, x$.
This proves the non-existence of moments part of~(a)(i) in the case $\beta^+ = \beta^-$.

Next suppose that $0 \leq \beta^+, \beta^- \leq \bc$ with $\beta^+ \neq \beta^-$.
Without loss of generality, suppose that $0 \leq \beta^- < \beta^+ = \beta \leq \bc$.
Then $0 \leq s_0 < 1/2$.
We consider the function $F_w^{\gamma,\nu}$ given by~\eqref{eq:F-def} with $\theta_0 = \theta_1$ given by~\eqref{eq:theta-1},
$\lambda > 0$, $w \in (2s_0, 1)$, and
$\gamma >1$ such that $\gamma w < 1$.
Also, take $\nu$ for which
$\gamma w + \beta^- -2 < 2\nu < \gamma w + \beta^+ -2$. Then by choice of
$\gamma$ and $w$, we have that the right-hand sides of~\eqref{eq:F-increment-interior} and~\eqref{eq:F-increment-boundary-beta-plus} are both eventually positive. Since $\lambda > 0$, 
the right-hand side of~\eqref{eq:F-increment-boundary-beta-minus}
is also eventually positive. Thus
\[ \Exp [ F_w^{\gamma,\nu} ( \xi_{n+1} ) - F_w^{\gamma,\nu} (\xi_n ) \mid \xi_n = x] \geq 0 ,\]
for all $x \in S$ with $\| x \|$ sufficiently large. Take $Y_n := ( F_w^{\gamma,\nu} (\xi_n) )^{1/(\gamma w)}$.
Then we have shown that, for this $Y_n$, the condition~\eqref{eq:non-existence1} holds.
Moreover, since $\gamma w <1$ we have from convexity that~\eqref{eq:non-existence2} also holds.
Again let $E_x = \{ \| \Delta \| <\| x \|^\delta \}$.
From~\eqref{eq:h-w-taylor-truncated} and~\eqref{eq:v-taylor},  
\[ \left| F^{\gamma,\nu}_w (x +y ) - F^{\gamma,\nu}_w (x) \right| \leq C \| y \| \| x\|^{\gamma w -1} ,\]
for all $y \in B_{r/2} (x)$. 
Then, by another Taylor's theorem calculation,
\begin{align*}
\left| \bigl(  F^{\gamma,\nu}_w (x +y )  \bigr)^{1/(\gamma w)} - \bigl(  F^{\gamma,\nu}_w (x  )  \bigr)^{1/(\gamma w)} \right|
\leq C \| y \| , \end{align*}
for all $y \in B_{r/2} (x)$. It follows that
$\Exp_x  [ ( Y_1 - Y_0 )^2 \2 { E_x }  ] \leq C$.
Moreover, by a similar argument to~\eqref{eq:f-gamma-big-jump},
$| Y_1 - Y_0 |^2 \leq C \| \Delta \|^{2\gamma w/\delta}$ on $E_x^\rc$,
so taking $\delta \in (2/p,1)$ and using the fact that $\gamma w <1$,
we get
$\Exp_x  [ ( Y_1 - Y_0 )^2 \2 { E^\rc_x }  ] \leq C$ as well.
Thus we also verify~\eqref{eq:non-existence3} in this case. 
Then we may again apply Theorem~2.7.4 of~\cite{mpw}
to get $\Exp_x ( \tau_r^s ) = \infty$ for all $s > \gamma w/2$, and hence all $s>s_0$. This completes the proof of~(a)(i).

For part~(a)(ii), suppose first that $\beta^+ = \beta^- = \beta$, and 
that $\bc \leq \beta <1$. 
We apply the function $f_w^\gamma$ with $w >0$ and $\gamma > 1$.
Then we have from~\eqref{eq:f-increment-interior} and~\eqref{eq:f-increment-boundary-small-beta} that~\eqref{eq:f-sub}
holds. Repeating the argument below~\eqref{eq:f-sub} shows that $\Exp_x ( \tau_r^s ) = \infty$ for all $s > \gamma w/2$, and hence all $s>0$. The case where $\beta^+ \neq \beta^-$
is similar, using an appropriate $F_w^{\gamma,\nu}$. This proves~(a)(ii).

It remains to consider the
case 
where $\beta^+, \beta^- >1$. Now we apply $f_w^\gamma$ with $\gamma >1$ and $w \in (0,1/2)$ small enough,
noting Remark~\ref{rem:w-choice}.
In this case~\eqref{eq:f-increment-interior} with~\eqref{eq:f-increment-boundary-big-beta} and Lemma~\ref{lem:facts}
show that~\eqref{eq:f-sub}
holds, and repeating the argument below~\eqref{eq:f-sub} shows that $\Exp_x ( \tau_r^s ) = \infty$ for all $s>0$.
This proves part~(b).
\end{proof}

\appendix

\section{Properties of the threshold function}
\label{sec:threshold}

For a constant $b \neq 0$, consider the function
\[ \phi (\alpha ) = \sin^2 \alpha + b \sin 2 \alpha .\]
Set $\alpha_0 := \frac{1}{2} \arctan ( -2b)$, which has $0 < | \alpha_0 | < \pi/4$.

\begin{lemma}
\label{lem:calculus}
There are two stationary points of $\phi$
 in $[-\frac{\pi}{2},\frac{\pi}{2}]$.
One of these is a local minimum at $\alpha_0$, with
\[ \phi (\alpha_0 ) = \frac{1}{2} \left( 1 - \sqrt{1 + 4 b^2} \right) < 0.\]
The other is a local maximum,  at $\alpha_1 = \alpha_0 + \frac{\pi}{2}$
if $b>0$, or at  $\alpha_1 = \alpha_0 - \frac{\pi}{2}$ if $b < 0$, with
\[ \phi (\alpha_1) = \frac{1}{2} \left( 1 + \sqrt{1 + 4 b^2} \right) >1 .
\]
\end{lemma}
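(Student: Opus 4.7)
My plan is to rewrite $\phi$ using double-angle identities as $\phi(\alpha) = \tfrac12 - \tfrac12\cos 2\alpha + b\sin 2\alpha$, which immediately converts the problem to one about a sinusoid of angular frequency $2$. Differentiating gives $\phi'(\alpha) = \sin 2\alpha + 2b\cos 2\alpha$, so stationary points satisfy $\tan 2\alpha = -2b$; since $2\alpha$ ranges over $[-\pi,\pi]$ as $\alpha$ ranges over $[-\tfrac{\pi}{2},\tfrac{\pi}{2}]$, this equation has exactly two solutions in the interior, namely $\alpha_0 = \tfrac12\arctan(-2b) \in (-\tfrac{\pi}{4},\tfrac{\pi}{4})$ and $\alpha_0 \pm \tfrac{\pi}{2}$, where the sign is forced (by the requirement $|\alpha_1|\le \tfrac{\pi}{2}$) to be $+$ when $b>0$ (so $\alpha_0<0$) and $-$ when $b<0$ (so $\alpha_0>0$). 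The endpoints $\pm\tfrac{\pi}{2}$ are not stationary since $\phi'(\pm\tfrac{\pi}{2}) = -2b \ne 0$.

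Next I would evaluate $\phi$ at each stationary point using the identity $\tan 2\alpha_0 = -2b$: from $\cos 2\alpha_0 = (1+4b^2)^{-1/2}$ and $\sin 2\alpha_0 = -2b(1+4b^2)^{-1/2}$ (both with the appropriate sign since $2\alpha_0 \in (-\tfrac{\pi}{2},\tfrac{\pi}{2})$ forces $\cos 2\alpha_0 > 0$), a direct substitution yields
\[
\phi(\alpha_0) = \tfrac12 - \tfrac{1}{2\sqrt{1+4b^2}} - \tfrac{2b^2}{\sqrt{1+4b^2}} = \tfrac12\bigl(1 - \sqrt{1+4b^2}\bigr).
\]
For $\alpha_1 = \alpha_0 \pm \tfrac{\pi}{2}$ we have $2\alpha_1 = 2\alpha_0 \pm \pi$, so $\cos 2\alpha_1 = -\cos 2\alpha_0$ and $\sin 2\alpha_1 = -\sin 2\alpha_0$; the same computation then gives $\phi(\alpha_1) = \tfrac12(1 + \sqrt{1+4b^2})$. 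Since $b \ne 0$ implies $\sqrt{1+4b^2}>1$, these values are $<0$ and $>1$ respectively, establishing the stated bounds.

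Finally, I would classify the stationary points using the second-derivative test: $\phi''(\alpha) = 2\cos 2\alpha - 4b\sin 2\alpha$, and plugging in the values of $\sin 2\alpha_0,\cos 2\alpha_0$ above gives $\phi''(\alpha_0) = 2\sqrt{1+4b^2} > 0$ (local minimum) and $\phi''(\alpha_1) = -2\sqrt{1+4b^2} < 0$ (local maximum). No step here is a real obstacle; the only mild care needed is tracking that the correct branch of $\arctan$ puts $\alpha_0$ in $(-\tfrac{\pi}{4},\tfrac{\pi}{4})$ and that the sign of $b$ determines whether $\alpha_0 + \tfrac{\pi}{2}$ or $\alpha_0 - \tfrac{\pi}{2}$ lies inside $[-\tfrac{\pi}{2},\tfrac{\pi}{2}]$, which the statement already records.
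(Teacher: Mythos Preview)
Your proof is correct and follows essentially the same approach as the paper's: differentiate, solve $\tan 2\alpha = -2b$ to locate the two stationary points $\alpha_0$ and $\alpha_0 \pm \tfrac{\pi}{2}$, apply the second-derivative test, and evaluate using $\cos 2\alpha_0 = (1+4b^2)^{-1/2}$, $\sin 2\alpha_0 = -2b(1+4b^2)^{-1/2}$. Your preliminary rewriting $\phi(\alpha) = \tfrac12 - \tfrac12\cos 2\alpha + b\sin 2\alpha$ and your explicit check that the endpoints $\pm\tfrac{\pi}{2}$ are not stationary are small clarifications, but the argument is otherwise identical.
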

\begin{proof}
We compute $\phi' (\alpha ) =   \sin 2 \alpha + 2 b \cos 2 \alpha$
and $\phi''(\alpha) = 2  \cos 2 \alpha - 4b \sin 2 \alpha$.
Then $\phi'(\alpha) =0$ if and only if
$\tan 2 \alpha = -2b$. Thus the stationary values
of $\phi$ are $\alpha_0 + k \frac{\pi}{2}$, $k \in \Z$.
Exactly two of these values fall in $[-\frac{\pi}{2},\frac{\pi}{2}]$,
namely $\alpha_0$ and $\alpha_1$ as defined in the statement of the lemma.
Also
\[ \phi'' (\alpha_0 ) = 2   \cos 2 \alpha_0 - 4 b \sin 2 \alpha_0 =
 \left( 2   +8b^2 \right) \cos 2\alpha_0 > 0,\]
so $\alpha_0$ is a local minimum.
Similarly, if $| \delta | = \pi/2$, then $\sin 2 \delta = 0$ and $\cos 2 \delta = -1$, so
\[ \phi'' (\alpha_0 + \delta ) = -  \cos 2 \alpha_0 + 4b \sin 2 \alpha_0 
= - \phi''(\alpha_0) ,\]
and hence the  stationary point at   $\alpha_1$ is a local maximum.
Finally, to evaluate the values of $\phi$ at the stationary points, note that
\[ \cos 2\alpha_0 = \frac{1}{\sqrt{1 + 4 b^2}} , \text{ and }
\sin 2 \alpha_0 = \frac{-2b}{\sqrt{1 + 4 b^2}} ,\]
and use the fact that  $2 \sin^2 \alpha_0 =  1- \cos 2 \alpha_0$
to get $\phi (\alpha_0)$,
and that $2 \cos^2 \alpha_0 = \cos 2 \alpha_0 +1$ to get
$\phi (\alpha_1 ) = \cos^2 \alpha_0 - b \sin 2 \alpha_0 =
1 - \phi (\alpha_0)$.
\end{proof}

\begin{proof}[Proof of Proposition~\ref{prop:bc-max-min}.]
By Lemma~\ref{lem:calculus} (and considering separately the
case $\sigma_1^2 = \sigma_2^2$) we see that the extrema of $\bc (\Sigma,\alpha)$
over $\alpha \in [ - \frac{\pi}{2}, \frac{\pi}{2}]$ are
\begin{align*}
 \frac{\sigma_1^2+\sigma_2^2}{2\sigma_2^2} \pm \frac{1}{2\sigma_2^2} \sqrt{ \left( \sigma_2^2 - \sigma_1^2 \right)^2 + 4 \rho^2 },
 \end{align*}
 as claimed at~\eqref{eq:bc-max-min}. It remains to show that the minimum is strictly positive,
 which is a consequence of the fact that
 \[ \sigma_1^2 + \sigma_2^2 - \sqrt{ \left(\sigma_1^2 + \sigma_2^2 \right)^2 - 4 \left( \sigma_1^2 \sigma_2^2 -  \rho^2 \right) } > 0 ,\]
 since $\rho^2 < \sigma_1^2 \sigma_2^2$ (as $\Sigma$ is positive definite).
\end{proof}

\section*{Acknowledgements}

A.M.~is supported by The Alan Turing Institute under the EPSRC grant EP/N510129/1 and
by the EPSRC grant EP/P003818/1 and the Turing Fellowship funded by the Programme on Data-Centric Engineering of Lloyd's Register Foundation. Some of the work reported in this paper was undertaken during visits by M.M.~and A.W.~to The Alan Turing Institute,
whose hospitality is gratefully acknowledged. The authors also thank two referees for helpful comments and suggestions.

\end{document}